\documentclass[10pt,letterpaper]{amsart}%
\usepackage{amsfonts}
\usepackage{amsmath}
\usepackage{amssymb}
\usepackage{color}
\usepackage{graphicx}%
\newtheorem{theorem}{Theorem}[section]
\theoremstyle{plain}

\newtheorem{lemma}[theorem]{Lemma}

\newtheorem{proposition}[theorem]{Proposition}
\newtheorem{remark}{Remark}

\numberwithin{equation}{section}

\begin{document}

\title[Sharp mixed norm Fourier extension]{Sharp radial--angular mixed norm Fourier extension and global stability on spheres}

\author{Zewei Xu}
\address[Zewei Xu]{Laboratory of Mathematics and Complex Systems (Ministry of Education), School of Mathematical Sciences, Beijing Normal University, Beijing, 100875, China}
\email{zwxu@mail.bnu.edu.cn}

\keywords{Fourier extension inequality, mixed norms, spherical harmonics, Bessel functions, stability}

\begin{abstract}

We study the sharp Fourier extension inequality from $L^2(\mathbb S^{d-1})$ to the radial--angular mixed space $L^q_{\mathrm{rad}}L^2_{\mathrm{ang}}$.  Our work is motivated by and builds upon the foundational analysis of Carneiro, Oliveira e Silva and Sousa \cite{COS}, who initiated the sharp mixed-norm spherical restriction problem from the viewpoint of spherical-harmonic decomposition.  Their reduction revealed that the extremal problem is governed by a competition among the individual harmonic modes, and their results established degree-zero extremizers in several important ranges of exponents, including even integer exponents and the endpoint case.  Subsequent developments, including the work of Ciccone and Sousa \cite{CS}, extended the known regions of modal dominance and further clarified the behavior of low harmonic degrees.  These results naturally suggest a more complete question: whether the zeroth mode dominates all higher modes throughout the whole admissible range and whether one can quantify the spectral gap between them.  The present paper addresses this question by establishing a complete modal ordering result.  Building on their spherical-harmonic reduction, the sharp-constant problem is reduced to the weighted Bessel moments
\[
I_{\nu,k}(q)
:=
\int_0^\infty
|J_{\nu+k}(r)|^q r^{2\nu+1-\nu q}\,dr,
\qquad
\nu:=\frac d2-1.
\]
For every integer $d\ge2$ and every finite admissible exponent $q>2d/(d-1)$, we prove
\[
I_{\nu,k}(q)<I_{\nu,0}(q),
\qquad k\ge1,
\]
and hence constants are the unique nonzero extremizers of the sharp mixed-norm extension inequality.  In dimensions $d\ge3$ we prove the stronger strict ordering
\[
I_{\nu,0}(q)>I_{\nu,1}(q)>I_{\nu,2}(q)>\cdots.
\]
The proof combines two different mechanisms.  In dimensions $d\ge3$, the comparison between the first two modes is obtained from a direct Bessel identity, while the remaining infinitely many comparisons are reduced to a continuous variation of the Bessel order and a quantitative positive-kernel argument.  The two-dimensional circle is genuinely different: the endpoint of the critical range produces a common divergent tail, and therefore the cancellation of this tail has to be preserved throughout the argument.  We resolve this difficulty by combining exact Hankel asymptotics with a finite remainder comparison.

The modal ordering also yields the optimal quadratic global stability inequality.  For every finite admissible $q$,
\[
C_{d,q}^2\|f\|_2^2
-\|\widehat{f\sigma}\|_{L^q_{\mathrm{rad}}L^2_{\mathrm{ang}}}^2
\ge
\Gamma_{d,q}\|f-P_0f\|_2^2,
\]
with the best possible coefficient.  When $d\ge3$, this coefficient is explicitly
\[
\Gamma_{d,q}
=(2\pi)^d
\left[
I_{\nu,0}(q)^{2/q}-I_{\nu,1}(q)^{2/q}
\right].
\]
Finally, the endpoint $q=\infty$ is handled separately because the integral
representation of the finite moments is no longer available.  Writing
\[
\mu_k=\sup_{r\ge0}|r^{-\nu}J_{\nu+k}(r)|^2,
\]
we prove the strict hierarchy $\mu_0>\mu_1>\mu_2>\cdots$ and derive the
corresponding sharp global stability inequality for the radial essential supremum norm.
\end{abstract}

\maketitle

\section{Introduction}

Let $\sigma$ denote the unnormalized surface measure on $\mathbb S^{d-1}$ and write
\[
\omega_{d-1}:=\sigma(\mathbb S^{d-1})
=\frac{2\pi^{d/2}}{\Gamma(d/2)}.
\]
For $f\in L^2(\mathbb S^{d-1},\sigma)$, we write the Fourier transform of the measure $f\sigma$ as
\begin{equation}\label{eq:intro-extension}
\widehat{f\sigma}(x)
:=
\int_{\mathbb S^{d-1}}e^{-ix\cdot\omega}f(\omega)\,d\sigma(\omega),
\qquad x\in\mathbb R^d.
\end{equation}
We consider the radial--angular mixed norm
\begin{equation}\label{eq:intro-mixed-norm}
\|F\|_{L^q_{\mathrm{rad}}L^2_{\mathrm{ang}}}
:=
\left[
\int_0^\infty
\left(
\int_{\mathbb S^{d-1}}|F(r\theta)|^2\,d\sigma(\theta)
\right)^{q/2}
r^{d-1}\,dr
\right]^{1/q},
\end{equation}
with the usual essential-supremum interpretation when $q=\infty$.

Mixed radial--angular restriction estimates go back to Vega \cite{VegaThesis,Vega}; see also C\'ordoba \cite{Cordoba} and C\'ordoba--Latorre \cite{CordobaLatorre}.  For the sphere, the mixed-norm extension estimate
\[
\|\widehat{f\sigma}\|_{L^q_{\mathrm{rad}}L^2_{\mathrm{ang}}}
\le C_{d,q}\|f\|_{L^2(\mathbb S^{d-1})}
\]
holds precisely in the strong-type range
\[
q>\frac{2d}{d-1}.
\]
The sharp constant and the extremizers are governed by a simple but nontrivial spectral question.  Indeed, if $P_k$ denotes the orthogonal projection onto the spherical harmonics of degree $k$, then the classical Fourier--Bessel formula \cite[Chapter IV, Eq.~(2.19) and Theorem~3.10]{SteinWeiss} gives
\begin{equation}\label{eq:intro-FB}
\widehat{Y_k\sigma}(r\theta)
=(2\pi)^{d/2}(-i)^k r^{-\nu}J_{\nu+k}(r)Y_k(\theta),
\qquad
\nu:=\frac d2-1.
\end{equation}
As observed in \cite{COS}, this identity and angular orthogonality separate the harmonic degrees exactly; consequently the sharp-constant problem reduces to comparing
\begin{equation}\label{eq:intro-I}
I_{\nu,k}(q)
:=
\int_0^\infty
|J_{\nu+k}(r)|^q r^{2\nu+1-\nu q}\,dr,
\qquad k=0,1,2,\ldots.
\end{equation}

Carneiro, Oliveira e Silva and Sousa \cite{COS} initiated the sharp mixed-norm spherical restriction analysis by reducing the problem to the comparison of the harmonic modes and establishing the basic modal framework that underlies the present work.  A central insight of their work is that, after the spherical-harmonic decomposition, the sharp constant is determined by the largest among a family of weighted Bessel moments associated with different harmonic degrees.  They proved that the constant mode is extremal in several important regimes, including admissible even integer exponents and the endpoint $q=\infty$, and their analysis raised the natural problem of understanding the full range of exponents for which the zeroth mode remains dominant.  In particular, the modal comparison problem left open the possibility of a more detailed ordering theory describing not only the largest mode but also the relative size of the higher modes.  Subsequent work of Ciccone and Sousa \cite{CS} enlarged some known ranges of global maximality and provided further information on the role of low-degree spherical harmonics.  The results of these works form the starting point of the present paper: rather than replacing the previous reduction, we use this spectral framework and pursue the finer question of complete modal ordering and the resulting quantitative stability estimates.  This mixed-norm problem belongs to the broader theory of sharp Fourier restriction and extension inequalities developed, among others, in \cite{Tomas,Foschi2007,Foschi2015,CarneiroOliveira,ChristShao,ChristShaoExt,Shao2016,CarneiroFoschiOST,FoschiOliveira,CNS}; see also \cite{Stein1993} for general restriction background.  Thus the central question is whether the zero mode dominates all higher spherical harmonics throughout the full admissible range.

\begin{theorem}[Carneiro--Oliveira e Silva--Sousa]\label{thm:COS-background}
Let $d\ge2$, put
\[
\nu:=\frac d2-1,
\]
and let
\[
q>\frac{2d}{d-1}
\]
be finite.  Define the modal sequence
\[
I_{\nu,k}(q)
:=
\int_0^\infty
|J_{\nu+k}(r)|^q r^{2\nu+1-\nu q}\,dr,
\qquad k\ge0 .
\]
The following statements hold.

\begin{enumerate}
\item The sequence $\{I_{\nu,k}(q)\}_{k\ge0}$ satisfies
\[
\lim_{k\to\infty}I_{\nu,k}(q)=0,
\]
and the sharp constant is determined by
\[
C_{d,q}
=(2\pi)^{d/2}
\max_{k\ge0}I_{\nu,k}(q)^{1/q}.
\]
Moreover, if $\ell$ satisfies
\[
I_{\nu,\ell}(q)=\max_{k\ge0}I_{\nu,k}(q),
\]
then the extremizers are contained in the spherical harmonic space
$\mathcal H_\ell^d$.

\item If $q$ is an admissible even integer, then
\[
I_{\nu,0}(q)=\max_{k\ge0}I_{\nu,k}(q),
\]
and the constant functions are the unique extremizers.

\item For a fixed dimension $d$, define
\[
\mathcal A_d=
\{q\in(2d/(d-1),\infty):
I_{\nu,0}(q)>I_{\nu,k}(q)\ \text{for all }k\ge1\}.
\]
Then $\mathcal A_d$ is open. Moreover, it contains all admissible even
integers, and one may define
\[
q_0(d):=
\inf\{r:\frac{2d}{d-1}<r<\infty,\ (r,\infty)\subset\mathcal A_d\}.
\]
\end{enumerate}
\end{theorem}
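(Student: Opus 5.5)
We would prove the three statements in order, starting from the spherical-harmonic decomposition $f=\sum_k Y_k$ with $Y_k=P_kf$.

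\textbf{Step 1: reduction to the modal moments.} By \eqref{eq:intro-FB} and orthogonality of the $Y_k$ in $L^2(\mathbb S^{d-1})$,
\[
\int_{\mathbb S^{d-1}}|\widehat{f\sigma}(r\theta)|^2\,d\sigma(\theta)=(2\pi)^d\sum_k |r^{-\nu}J_{\nu+k}(r)|^2\,a_k,
\qquad a_k:=\|Y_k\|_2^2 .
\]
Since $q/2>1$, Minkowski's inequality in $L^{q/2}(r^{d-1}dr)$ gives
\[
\|\widehat{f\sigma}\|^2_{L^q_{\mathrm{rad}}L^2_{\mathrm{ang}}}\le(2\pi)^d\sum_k a_k\,\big\||r^{-\nu}J_{\nu+k}|^2\big\|_{L^{q/2}(r^{d-1}dr)} .
\]
The $k$-th norm on the right equals $I_{\nu,k}(q)^{2/q}$, because $|r^{-\nu}J_{\nu+k}|^q r^{2\nu+1}=|J_{\nu+k}|^q r^{2\nu+1-\nu q}$. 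Hence
\[
\|\widehat{f\sigma}\|^2_{L^q_{\mathrm{rad}}L^2_{\mathrm{ang}}}\le(2\pi)^d\max_k I_{\nu,k}(q)^{2/q}\,\|f\|_2^2 ,
\]
and equality is attained by any $Y_\ell$ with $\ell$ a maximizing index. For this maximum to exist we need $I_{\nu,k}(q)\to0$, and we prove it as follows. Near the origin, $|J_{\nu+k}(r)|\le (r/2)^{\nu+k}/\Gamma(\nu+k+1)$. At infinity, the uniform bound $|J_\mu(r)|\le C\mu^{-1/3}$ holds, and $|J_\mu(r)|\lesssim r^{-1/2}$ for $r\ge2\mu$. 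The admissibility condition $q>2d/(d-1)$ is exactly what makes the weight $r^{2\nu+1-\nu q}|J_\mu|^q\sim r^{2\nu+1-\nu q-q/2}$ integrable at infinity. Splitting $\int_0^{\mu/2}+\int_{\mu/2}^{2\mu}+\int_{2\mu}^\infty$ with $\mu=\nu+k$ then gives a bound of order $\mu^{2\nu+2-\nu q-q/3}\to0$. On the region $r<\mu/2$ we use the exponential smallness of $J_\mu$ (Debye/Kapteyn), and on the middle region the bound $\mu^{-1/3}$. The exponent is negative since $q>2d/(d-1)$.

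\textbf{Step 2: characterization of extremizers.} Equality in Minkowski's inequality with $q/2>1$ forces the functions $|J_{\nu+k}|^2r^{-2\nu}$ with $a_k>0$ to be proportional. Distinct Bessel orders have different zero sets, so these functions are never proportional, and hence exactly one $a_k$ is nonzero. Equality in the final bound then forces that index $k$ to be a maximizing one. This proves that extremizers lie in $\mathcal H_\ell^d$, and that they are constants whenever $\ell=0$ is the unique maximizer.

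\textbf{Step 3: even exponents.} Write $q=2m$. We would expand $|\widehat{f\sigma}|^2$ as a convolution and express $\|\widehat{f\sigma}\|_{L^{2m}_{\mathrm{rad}}L^2_{\mathrm{ang}}}$ via Plancherel as an integral involving $m$-fold convolutions of $\sigma$ with positive kernels. Alternatively, and more directly, we would compare the moments $I_{\nu,k}(2m)$ through the Gegenbauer/Neumann product formula. That formula represents $J_{\nu+k}(r)^2r^{-2\nu}$ as an average of $J_{2\nu}$-type kernels with a Gegenbauer weight, which is bounded in absolute value by the $k=0$ weight. Raising to the $m$-th power and integrating exploits positivity of the $m$-fold product of spherical measures, giving $I_{\nu,k}\le I_{\nu,0}$ with strict inequality for $k\ge1$. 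This step is the main obstacle: the positivity must be arranged so that strictness survives. What we would try first is the Funk--Hecke representation
\[
J_{\nu+k}(r)r^{-\nu}\propto\int_{\mathbb S^{d-1}}e^{ir\omega_1}P_k(\omega_1)\,d\sigma(\omega),
\]
together with $|P_k|\le1$ and strict inequality off a null set, applied inside the $m$-fold convolution. Strictness for each $k\ge1$ combined with Step 2 then gives uniqueness of constants.

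\textbf{Step 4: the set $\mathcal A_d$.} Each $I_{\nu,k}$ is continuous in $q$, by dominated convergence with the bounds of Step 1, which are locally uniform in $q$. Moreover the decay in Step 1 is uniform for $q$ in compact subsets of the admissible range. So near a point $q^\ast\in\mathcal A_d$ only finitely many $k$ need checking, while the tail $\sup_{k\ge K}I_{\nu,k}(q)$ stays below $I_{\nu,0}(q)/2$ locally. Finitely many strict inequalities persist under small perturbation, so $\mathcal A_d$ is open. It contains the even exponents by Step 3. Finally $q_0(d)$ is well defined once some interval $(r,\infty)\subset\mathcal A_d$ exists, which follows from the large-$q$ asymptotics: as $q\to\infty$, $I_{\nu,k}(q)^{1/q}\to\sup_r|r^{-\nu}J_{\nu+k}(r)|$, and the $k=0$ supremum $1/(2^\nu\Gamma(\nu+1))$, attained at $r=0$, strictly exceeds the others.
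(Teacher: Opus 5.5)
The paper does not prove this statement itself. It is quoted from COS: only the spectral reduction with Minkowski's inequality (Section~2) and the strict-convexity equality argument (Section~4, where $\phi_i,\phi_j$ fail to be proportional because their vanishing orders at $r=0$ differ) are reproduced, and $I_{\nu,k}(q)\to0$ is cited to Stempak. Your Steps~1--2 match those parts, but your proof of the decay fails on part of the admissible range.

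\textbf{Step 1 (decay).} Using $|J_\mu|\le C\mu^{-1/3}$ on the whole interval $[\mu/2,2\mu]$ gives $\mu^{2\nu+2-\nu q-q/3}$. This exponent is negative only for $q>6(\nu+1)/(3\nu+1)$, which is strictly larger than $2d/(d-1)=4(\nu+1)/(2\nu+1)$. For $d=2$ you would need $q>6$ instead of $q>4$; for $d=3$ you would need $q>18/5$ instead of $q>3$. So ``the exponent is negative since $q>2d/(d-1)$'' is false. On $\bigl(2d/(d-1),\,6(\nu+1)/(3\nu+1)\bigr]$ your argument therefore yields neither the existence of $\max_kI_{\nu,k}(q)$ in (1) nor the locally uniform tail bound you use for openness in Step~4.

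The loss comes from the fact that Landau's bound is sharp only in a window of width $\mu^{1/3}$ around the turning point $r=\mu$. To repair it:
\begin{itemize}
\item keep your series bound near $r=0$;
\item use Airy-type decay below the turning point (as in Lemma~\ref{lem:pre-turning}, or Debye's bound for non-integer order);
\item use Landau's bound only on $[\mu,\sqrt{\mu^2+\mu^{4/3}}]$;
\item beyond that, use Watson's bound $J_\mu(r)^2\le 2/(\pi\sqrt{r^2-\mu^2})$ from Lemma~\ref{lem:Hankel-bounds}.
\end{itemize}
This is exactly the role of the transition interval $[k,R_k]$, $v_k=\frac75k^{2/3}$, in the circle argument. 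It gives $I_{\nu,k}(q)=O\bigl(\mu^{2\nu+2-q(\nu+1/2)}+\mu^{2\nu+4/3-q(\nu+1/3)}\bigr)$, with a logarithm at $q=4$. Both exponents are negative for admissible $q$, because $(6\nu+4)/(3\nu+1)\le4(\nu+1)/(2\nu+1)$.

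\textbf{Step 3 (even exponents).} As you suspect, this is not yet an argument. Write $u_k(r)=r^{-\nu}J_{\nu+k}(r)$. There is no pointwise domination of $|u_k|$ by $|u_0|$, since $u_0$ vanishes where $u_k$ does not. Neither the one-ray Funk--Hecke formula nor the one-dimensional Gegenbauer integral is a Fourier transform on $\mathbb R^d$ to which Plancherel applies; the radial weight $r^{d-1}$ is the obstruction.

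The missing idea is the addition theorem. For an orthonormal basis $\{Y_{k,j}\}_{j=1}^{N_k}$ of $\mathcal H_k^d$, \eqref{eq:intro-FB} gives $|u_k(|x|)|^2=\frac{\omega_{d-1}}{(2\pi)^dN_k}\sum_j|\widehat{Y_{k,j}\sigma}(x)|^2$. For $q=2m$, expand the $m$-th power, apply Plancherel to $Y_{k,j_1}\sigma*\cdots*Y_{k,j_m}\sigma$, and resum. This yields
\[
\omega_{d-1}I_{\nu,k}(2m)=(2\pi)^{d(1-m)}\int_{\mathbb R^d}\iint\prod_{i=1}^m\tilde P_k(\omega_i\cdot\eta_i)\,d\Sigma_\xi(\omega)\,d\Sigma_\xi(\eta)\,d\xi ,
\]
where $\Sigma_\xi$ is the fibre measure of $\sigma^{\otimes m}$ on $\{\omega_1+\cdots+\omega_m=\xi\}$, and $\tilde P_k$ is the Gegenbauer (Chebyshev if $d=2$) polynomial normalized by $\tilde P_k(1)=1$. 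For $k\ge1$, $|\tilde P_k|\le1$ with equality only at finitely many points. The fibres have dimension $m(d-1)-d$, which is positive exactly when $2m>2d/(d-1)$. Comparison with the case $\tilde P_0\equiv1$ then gives $I_{\nu,k}(2m)<I_{\nu,0}(2m)$.

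\textbf{Step 4 (large $q$).} The limit $I_{\nu,k}(q)^{1/q}\to\sup_r|u_k|$ for each fixed $k$ does not by itself produce an interval $(r,\infty)\subset\mathcal A_d$; you need uniformity in $k$. One way: put $m_*:=\sup_{k\ge1}\sup_r|u_k(r)|$. This is $<h_\nu:=2^{-\nu}/\Gamma(\nu+1)$, because each $\sup_r|u_k|<h_\nu$ (Lemma~\ref{lem:sup-ordering}) and $\sup_r|u_k|\to0$ by Landau's bound. Then compare $I_{\nu,k}(q)\le m_*^{\,q-q_1}\sup_{k\ge1}I_{\nu,k}(q_1)$ with $I_{\nu,0}(q)\ge(h_\nu-\varepsilon)^q\delta^d/d$, choosing $h_\nu-\varepsilon>m_*$.
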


We see from this theorem that the sharp mixed-norm problem is reduced to
determining the maximal element of the modal sequence
$\{\Lambda_{d,q}(k)\}_{k\ge0}$.  Following the work of
Carneiro, Oliveira e Silva and Sousa \cite{COS}, subsequent studies have
further improved the known range of exponents associated with the threshold
$q_0(d)$.  These developments motivate the present work, where we investigate
the finer ordering structure among all spherical harmonic modes and derive the
corresponding quantitative stability estimates.

Our first theorem gives a complete answer.

\begin{theorem}\label{thm:main-extension}
Let $d\ge2$ be an integer and put $\nu:=d/2-1$.  For every finite
\[
q>\frac{2d}{d-1},
\]
one has
\begin{equation}\label{eq:main-moment-comparison}
I_{\nu,k}(q)<I_{\nu,0}(q),
\qquad k\ge1.
\end{equation}
Hence the sharp constant in
\[
\|\widehat{f\sigma}\|_{L^q_{\mathrm{rad}}L^2_{\mathrm{ang}}}
\le C_{d,q}\|f\|_2
\]
is
\begin{equation}\label{eq:main-Cdq}
C_{d,q}
=(2\pi)^{d/2}
\left(
\int_0^\infty
|J_{d/2-1}(r)|^q
r^{d-1-q(d/2-1)}\,dr
\right)^{1/q},
\end{equation}
and the nonzero extremizers are exactly the almost-everywhere constant functions.

If $d\ge3$, then the stronger ordering
\begin{equation}\label{eq:full-ordering-intro}
I_{\nu,0}(q)>I_{\nu,1}(q)>I_{\nu,2}(q)>\cdots
\end{equation}
holds for every finite admissible $q$.
\end{theorem}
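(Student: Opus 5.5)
The proof separates into a reduction step and an analytic comparison of the moments. By Theorem~\ref{thm:COS-background}(1), once we have the strict inequality \eqref{eq:main-moment-comparison} for all $k\ge1$, the maximum of $\{I_{\nu,k}(q)\}$ is attained only at $k=0$. This immediately gives the formula \eqref{eq:main-Cdq} for $C_{d,q}$ and confines the extremizers to $\mathcal H_0^d$, which is exactly the constants. Everything therefore reduces to comparing the Bessel moments. Throughout we write $a:=2\nu+1-\nu q$ for the weight exponent, so that
\[
I_{\nu,k}(q)=\int_0^\infty |J_{\nu+k}(r)|^q r^{a}\,dr .
\]
Admissibility $q>2d/(d-1)$ is precisely the condition $a-q/2<-1$, which is what makes the tail $|J|^q r^a\sim r^{a-q/2}$ integrable.

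\emph{First comparison, $d\ge3$.} For $k=1$ we use the recurrence
\[
J_{\nu+1}(r)=-r^{\nu}\frac{d}{dr}\bigl(r^{-\nu}J_\nu(r)\bigr),
\]
so that $r^{-\nu}J_{\nu+1}=-g'$ with $g:=r^{-\nu}J_\nu$. Writing both moments in terms of $g$ gives
\[
I_{\nu,0}=\int |g|^q r^{2\nu+1}\,dr,\qquad I_{\nu,1}=\int |g'|^q r^{2\nu+1}\,dr .
\]
The Bessel equation becomes $(r^{2\nu+1}g')'=-r^{2\nu+1}g$. We will try to compare the two integrals by combining integration by parts against this equation with a convexity (Hölder) step. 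The boundary terms vanish precisely because of admissibility.

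\emph{Higher modes, $d\ge3$.} For the comparisons $I_{\nu,k}>I_{\nu,k+1}$ with $k\ge1$, we treat the order as a continuous parameter. Set
\[
\Phi(\mu):=\int_0^\infty |J_\mu(r)|^q r^{a}\,dr,\qquad \mu\ge\nu+1,
\]
with $a$ fixed, and aim to show $\Phi'(\mu)<0$. Differentiating under the integral requires $\partial_\mu J_\mu$. For this we use the Schläfli/Nicholson-type positive integral representation of $\partial_\mu J_\mu$, or the product formula
\[
\partial_\mu (J_\mu^2)=\text{(positive kernel)}\times J^2_\cdot
\]
arising from the Nicholson integral
\[
J_\mu^2+Y_\mu^2=\frac{8}{\pi^2}\int_0^\infty K_0(2r\sinh t)\cosh(2\mu t)\,dt .
\]
The goal is to write $\Phi'(\mu)$ as a double integral whose kernel is positive and multiplies a quantity of definite sign. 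The sign should come from the fact that the weight $r^{a}$ is decreasing relative to the natural scaling $r^{q/2-1}$, which is again admissibility. This positivity argument is what I expect to be the \textbf{main obstacle}. Near the critical exponent the decay margin $a-q/2+1$ is small, so the kernel estimates must be quantitative and uniform in $\mu$. That is why the case $k=0\to1$ is handled separately by the exact identity: the continuous-order argument needs $\mu$ bounded away from $\nu$, or at least $\mu\ge1$.

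\emph{Dimension $d=2$.} Here $\nu=0$, so $a=1$ and $I_{0,k}=\int |J_k|^q r\,dr$ with $q>4$. The ordering need not be monotone, so we only prove $I_{0,k}<I_{0,0}$. We use the Hankel asymptotics
\[
J_\mu(r)=\sqrt{2/(\pi r)}\,\bigl(\cos(r-\mu\pi/2-\pi/4)+O(\mu^2/r)\bigr).
\]
The leading term of $|J_\mu|^q r$ has the same average for every $\mu$, and its tail $r^{1-q/2}$ becomes divergent as $q\downarrow4$. We therefore compare $I_{0,0}-I_{0,k}$ directly, keeping the exact cancellation of the leading oscillatory mean. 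The tail over $[R,\infty)$ is controlled by a remainder estimate. On the finite interval $[0,R]$ we use that $J_k$ is small on $[0,k)$, together with the bound $|J_k|\le \sup|J_1|<\sup |J_0|=1$ for $k\ge1$, to establish a strict gap. Large $k$ is handled by the limit $I_{0,k}\to0$ from Theorem~\ref{thm:COS-background}(1) together with uniform bounds. Finally, the strictness at $q=4$ and $q=6$ is consistent with the even-integer results in Theorem~\ref{thm:COS-background}(2), which serve as a check.
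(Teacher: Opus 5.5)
Your overall architecture matches the paper's: reduction via Theorem~\ref{thm:COS-background}(1), a separate first link, continuous variation of the order for the later modes when $d\ge3$, and cancellation of the common critical tail when $d=2$. However, none of the three analytic steps is actually supplied, and the mechanisms you indicate would not work as stated.

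\emph{First link.} Integration by parts against $(wg')'=-wg$, $w=r^{2\nu+1}$, followed by H\"older cannot reach the sharp constant $1$. One gets exactly $I_{\nu,0}(q)=(q-1)\int_0^\infty w|g|^{q-2}(g')^2\,dr$. H\"older then yields only $I_{\nu,0}^{2/q}\le(q-1)I_{\nu,1}^{2/q}$, which is the wrong direction and off by the factor $q-1$. Moving the derivative the other way gives $I_{\nu,1}=(q-1)\int_0^\infty wg^2|g'|^{q-2}\,dr+(q-2)(2\nu+1)\int_0^\infty r^{2\nu}g\,g'|g'|^{q-2}\,dr$, again with the factor $q-1$ plus a sign-indefinite term. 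The paper instead constructs a nonlinear flux $\Phi_q$ solving $(1+t^2)\Phi_q'-qt\Phi_q=1-|t|^q$. With $u=g$ and $v=-g'$, $\bigl[r^{2\nu+1}|u|^q\Phi_q(v/u)\bigr]'$ equals $r^{2\nu+1}(|u|^q-|v|^q)$ minus $(2\nu+1)r^{2\nu}|u|^q\mathcal S_q(v/u)$, where $\mathcal S_q$ is odd and increasing. The flux vanishes at zeros of $J_{\nu+1}$. Positivity of the correction integral on each cell between consecutive zeros of $J_{\nu+1}$ then comes from a reflected Wronskian comparison of $|J_\nu|$ on the two sides of the zero of $J_\nu$ inside the cell.

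\emph{Later modes, $d\ge3$.} Landau's order-variation formula gives $\partial_\alpha M_\alpha=\int_0^\infty r^\beta|J_\alpha|^qD_\alpha\,dr$, with $D_\alpha=(q+2)r\mathcal A_\alpha'+2(\beta+1)\mathcal A_\alpha$ and $\mathcal A_\alpha(r)=\int_0^\infty K_0(2r\sinh t)e^{-2\alpha t}\,dt$. This has a fixed sign only for $\beta\le-1$; admissibility ($\beta<q/2-1$) does not give that. In the admissible range with $\beta>-1$ (e.g.\ $d=3$, $3<q<6$) one has $D_\alpha(r)\to+\infty$ as $r\to0^+$, because $\mathcal A_\alpha$ is logarithmically singular while $r\mathcal A_\alpha'$ stays bounded. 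So no positivity argument can succeed. The paper needs the quantitative balance $P_\alpha<\frac{49}{50}N_\alpha$, built from:
\begin{itemize}
\item a negative sign beyond $3\alpha/4$, from a Chebyshev comparison of $\mathcal A_\alpha$ with an explicit model;
\item logarithmic compensation plus a Gaussian product--Rayleigh bound on $[0,3\alpha/4]$;
\item a definite negative mass on $[2\alpha,4\alpha]$, from the exact Hankel amplitude and phase.
\end{itemize}

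\emph{The circle, $d=2$.} The fixed-order expansion with error $O(\mu^2/r)$ is useful only for $r\gg k^2$, and the bound $|J_k|\le\sup|J_1|$ is worthless against $r\,dr$ on $[k,k^2]$. Nor can $I_{0,k}(q)\to0$ dispose of large $k$ near the critical exponent, since for every $k$ one has $I_{0,k}(4+t)\sim 3/(\pi^2t)$ as $t\to0^+$. What is required is a lower bound for $I_{0,0}-I_{0,k}$ uniform in both $k$ and $t$. The paper obtains it, for $k\ge4$ and $4<q\le5$ only, from:
\begin{itemize}
\item an exponential pre-turning bound on $[0,k]$;
\item Landau's $k^{-1/3}$ bound on $[k,\sqrt{k^2+v_k^2}]$ with $v_k=\frac75k^{2/3}$;
\item the uniform bound $B_k(r)^2\le2/(\pi\sqrt{r^2-k^2})$ combined with exact-phase integration by parts on the tail;
\item explicit cancellation of the common pole;
\item a log-convexity lower bound for a fixed positive core of $I_{0,0}$.
\end{itemize}
The remaining cases are handled separately: $k=1,2,3$ require an adjacent-mode identity, $5\le q<6$ uses Landau's bound, and $q\ge6$ is imported from Ciccone--Sousa. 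Note also that $q=4$ is not admissible when $d=2$, so it cannot serve as a consistency check.
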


The proof has two parts.  In dimensions $d\ge3$, we first separate the base pair.  A scalar flux identity and a reflected Wronskian comparison around consecutive zeros prove $I_{\nu,0}(q)>I_{\nu,1}(q)$ directly for every admissible exponent.  Once this bridge is available, every remaining order satisfies $\alpha\ge\nu+1\ge3/2$.  Landau's order-variation identity then reduces the remaining comparisons to a quantitative sign analysis of the order derivative.  The associated positive kernel is not introduced merely as a formal representation: its monotonicity provides the negative contribution needed to dominate the possible positive part.  More precisely, the small-radius contribution is controlled through a logarithmic compensation estimate together with a Gaussian improvement of the Bessel bound, while on a carefully chosen oscillatory interval the Hankel asymptotic phase yields a uniform negative contribution.  The parameter estimates show that this negative part dominates the entire error, which proves all later comparisons and hence the full ordering \eqref{eq:full-ordering-intro}.

The circle requires a different argument.  Here $\nu=0$ and the critical exponent is $q=4$, where the individual fourth moments are divergent.  Thus one cannot start from the endpoint and interpolate.  We instead compare the common tails before estimating them: the first few modes are treated by an explicit scalar remainder, while the higher modes are controlled through the exact Hankel phase.  This gives $I_k(q)<I_0(q)$ for every $k\ge1$ and every $q>4$.

Once the strict modal comparison is known, the global stability problem becomes transparent.  Put
\[
u_k(r):=r^{-\nu}J_{\nu+k}(r),
\qquad
\phi_k(r):=u_k(r)^2,
\qquad
\lambda_k:=I_{\nu,k}(q)^{2/q},
\qquad
K_d:=(2\pi)^d,
\]
and let $P_0$ denote the orthogonal projection onto the constant functions.  For finite admissible $q$, define
\[
D_{d,q}(f)
:=C_{d,q}^2\|f\|_2^2
-\|\widehat{f\sigma}\|_{L^q_{\mathrm{rad}}L^2_{\mathrm{ang}}}^2,
\qquad
\lambda_*:=\max_{k\ge1}\lambda_k.
\]
The maximum exists because $\lambda_k\to0$. This decay of the higher harmonic modes follows from the modal analysis developed in \cite{COS}; we use it here as a consequence of the spherical-harmonic reduction rather than as a new ingredient.

\begin{theorem}\label{thm:intro-global-stability}
For every $d\ge2$, every finite admissible $q$, and every
$f\in L^2(\mathbb S^{d-1})$,
\begin{equation}\label{eq:intro-global-stability}
D_{d,q}(f)
\ge
\Gamma_{d,q}\|f-P_0f\|_2^2,
\qquad
\Gamma_{d,q}=K_d(\lambda_0-\lambda_*)>0.
\end{equation}
The coefficient is optimal.  Equality holds exactly for constants and for functions contained in a single maximizing nonzero harmonic space.  Here ``maximizing'' means that the degree $j\ge1$ of that space satisfies
\[
\lambda_j=\max_{k\ge1}\lambda_k.
\]

If $d\ge3$, then $\lambda_*=\lambda_1$ and
\begin{equation}\label{eq:intro-global-explicit}
\Gamma_{d,q}
=(2\pi)^d
\left[
I_{\nu,0}(q)^{2/q}-I_{\nu,1}(q)^{2/q}
\right].
\end{equation}
In this case the nonconstant equality cases are precisely the first spherical harmonics.
\end{theorem}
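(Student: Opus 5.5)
We would prove Theorem~\ref{thm:intro-global-stability} by reducing the mixed norm to a weighted sum over harmonic degrees, and then applying the reverse Minkowski inequality in $L^{q/2}$.

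\emph{Modal decomposition.} Write $f=\sum_{k\ge0}P_kf$ and set $a_k:=\|P_kf\|_2^2$. By \eqref{eq:intro-FB} and angular orthogonality of the $Y_k$,
\[
\int_{\mathbb S^{d-1}}|\widehat{f\sigma}(r\theta)|^2\,d\sigma(\theta)=K_d\sum_{k\ge0}a_k\phi_k(r).
\]
Since $q/2>1$, $\|\widehat{f\sigma}\|^2_{L^q_{\mathrm{rad}}L^2_{\mathrm{ang}}}=K_d\,\bigl\|\sum_k a_k\phi_k\bigr\|_{L^{q/2}(r^{d-1}dr)}$. Minkowski's inequality gives
\[
\Bigl\|\sum_k a_k\phi_k\Bigr\|_{q/2}\le\sum_k a_k\|\phi_k\|_{q/2}=\sum_k a_k\lambda_k,
\]
because $\|\phi_k\|_{q/2}=\bigl(\int|J_{\nu+k}|^q r^{-\nu q}r^{d-1}dr\bigr)^{2/q}=\lambda_k$, using $d-1=2\nu+1$. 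Theorem~\ref{thm:main-extension} gives $C_{d,q}^2=K_d\lambda_0$. Therefore
\[
D_{d,q}(f)\ge K_d\sum_{k\ge1}a_k(\lambda_0-\lambda_k)\ge K_d(\lambda_0-\lambda_*)\sum_{k\ge1}a_k=\Gamma_{d,q}\|f-P_0f\|_2^2 .
\]
The maximum $\lambda_*$ is attained because $\lambda_k\to0$ (Theorem~\ref{thm:COS-background}(1)). Positivity of $\Gamma_{d,q}$ follows from \eqref{eq:main-moment-comparison}: $\lambda_k<\lambda_0$ for each $k\ge1$ and the supremum is attained.

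\emph{Optimality and equality.} If $f\in\mathcal H_j$ with $\lambda_j=\lambda_*$, then there is only one summand, both inequalities are equalities, and $D_{d,q}(f)=K_d(\lambda_0-\lambda_*)\|f\|^2$. So the constant $\Gamma_{d,q}$ cannot be improved, and these $f$ together with the constants are equality cases.

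Conversely, suppose equality holds. The second inequality forces $a_k=0$ for every $k\ge1$ with $\lambda_k<\lambda_*$. The first (Minkowski) inequality is strict unless the nonzero summands $a_k\phi_k$ are pairwise proportional a.e., because $L^{q/2}$ is strictly convex for $1<q/2<\infty$. This is the step needing care. Distinct $\phi_k=r^{-2\nu}J_{\nu+k}^2$ are not proportional, since $J_{\nu+k}$ and $J_{\nu+j}$ have different zeros (for instance, their small-$r$ behaviour is $r^{2k}$ versus $r^{2j}$). Hence at most one index carries mass. That index is either $0$, giving a constant, or a single maximizing $j\ge1$, so $f\in\mathcal H_j$.

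\emph{The case $d\ge3$.} The strict ordering \eqref{eq:full-ordering-intro} gives $\lambda_*=\lambda_1>\lambda_k$ for every $k\ge2$. This yields \eqref{eq:intro-global-explicit}, and the nonconstant equality cases are exactly $\mathcal H_1$.

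The main obstacle is already contained in Theorem~\ref{thm:main-extension}. Within this proof, the only delicate point is the strict-convexity equality characterization and ruling out proportionality of distinct $\phi_k$, which the argument above handles.
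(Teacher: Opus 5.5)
Your proposal is correct and follows essentially the same route as the paper. The paper packages your two inequalities as the exact identity $D_{d,q}(f)/K_d=\sum_{k\ge1}(\lambda_0-\lambda_k)a_k+\mathcal T_q(f)$, where $\mathcal T_q(f)\ge0$ is the Minkowski remainder, and then settles the equality cases exactly as you do: strict convexity of $L^{q/2}$, together with the fact that distinct $\phi_k$ vanish to different orders at the origin.
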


The endpoint $q=\infty$ retains the same spectral structure, although the
radial $L^q$ norm is replaced by a supremum and the moment comparison must be
replaced by a comparison of the maximal Bessel amplitudes.  Set
\[
\mu_k
:=
\sup_{r\ge0}|r^{-\nu}J_{\nu+k}(r)|^2.
\]
We prove
\[
\mu_0>\mu_1>\mu_2>\cdots,
\qquad
\mu_0=\frac{2^{-2\nu}}{\Gamma(\nu+1)^2},
\]
and obtain the following sharp endpoint stability estimate.

\begin{theorem}\label{thm:intro-endpoint-stability}
For every $d\ge2$ and every $f\in L^2(\mathbb S^{d-1})$,
\[
C_{d,\infty}=\omega_{d-1},
\]
and
\begin{equation}\label{eq:intro-endpoint-stability}
C_{d,\infty}^2\|f\|_2^2
-
\|\widehat{f\sigma}\|_{L^\infty_{\mathrm{rad}}L^2_{\mathrm{ang}}}^2
\ge
(2\pi)^d(\mu_0-\mu_1)\|f-P_0f\|_2^2.
\end{equation}
The coefficient is optimal, and equality holds exactly for constants and pure first spherical harmonics.
\end{theorem}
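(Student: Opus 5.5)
We must prove Theorem \ref{thm:intro-endpoint-stability}: $C_{d,\infty}=\omega_{d-1}$, together with the stability inequality and its equality cases. The plan has three steps: a modal reduction at $q=\infty$, the hierarchy $\mu_0>\mu_1>\cdots$ (at least $\mu_0>\mu_k$ for all $k\ge1$ and $\mu_1\ge\mu_k$ for $k\ge2$), and a short convexity-type argument.

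\emph{Step 1: modal reduction.} Expand $f=\sum_k Y_k$ with $Y_k=P_kf$. By \eqref{eq:intro-FB} and angular orthogonality, for every $r>0$,
\[
\int_{\mathbb S^{d-1}}|\widehat{f\sigma}(r\theta)|^2\,d\sigma(\theta)=(2\pi)^d\sum_{k\ge0}\phi_k(r)\|Y_k\|_2^2 .
\]
This quantity is continuous in $r$, so the essential supremum equals the supremum. Hence
\[
\|\widehat{f\sigma}\|_{L^\infty_{\mathrm{rad}}L^2_{\mathrm{ang}}}^2=(2\pi)^d\sup_{r\ge0}\sum_k\phi_k(r)\|Y_k\|_2^2 .
\]

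\emph{Step 2: the amplitudes.} For $k=0$, use the Poisson integral
\[
r^{-\nu}J_\nu(r)=\frac{2^{-\nu}}{\Gamma(\nu+1/2)\Gamma(1/2)}\int_{-1}^1 e^{irt}(1-t^2)^{\nu-1/2}\,dt .
\]
It gives $|u_0(r)|\le u_0(0)=2^{-\nu}/\Gamma(\nu+1)$, with equality only at $r=0$ because the integrand is not of constant phase for $r>0$. This yields the formula for $\mu_0$, and $(2\pi)^d\mu_0=\omega_{d-1}^2$, which checks against the computation at $r=0$. Since $\phi_0(r)<\phi_0(0)$ for $r>0$ and $\phi_0(r)\to0$ as $r\to\infty$, we get $\phi_0(r)\le\mu_0-\delta(\epsilon)$ for $r\ge\epsilon$.

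For $k\ge1$, write $u_k(r)=r^k\cdot r^{-(\nu+k)}J_{\nu+k}(r)$. Comparing with $u_0$ alone is not enough, so I would use the Gegenbauer/Sonine-type representation
\[
r^{-\nu}J_{\nu+k}(r)=c\int_{-1}^1 e^{irt}C_k^{\nu}(t)(1-t^2)^{\nu-1/2}\,dt ,
\]
valid for $\nu>0$, i.e.\ $d\ge3$; this is Funk--Hecke. It gives
\[
|u_k(r)|\le \frac{|c_k|}{|c_0|}\int|C_k^\nu|\,w\Big/\!\int w .
\]
A cleaner route is the addition theorem. Take the zonal harmonic $Z_k$ of degree $k$ with pole $e$; then $u_k(r)$ is proportional to $\widehat{Z_k\sigma}(re)$, and Cauchy--Schwarz gives $|u_k(r)|^2\le$ const $\cdot\|Z_k\|_2^2$. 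Strictness then needs $e^{-ir\theta\cdot e}$ not to be proportional to the degree-$k$ component.

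The strict ordering $\mu_k>\mu_{k+1}$ is the main obstacle. I would try the recurrence
\[
\frac{d}{dr}\bigl(r^{-\alpha}J_\alpha\bigr)=-r^{-\alpha}J_{\alpha+1},
\]
combined with the Landau-type fact that $\sup_r|J_\alpha(r)|$ decreases in $\alpha$ (Landau's bound $|J_\alpha|\le b\alpha^{-1/3}$ plus monotonicity of the first maximum $j'_{\alpha,1}$ value). Concretely, $\mu_k=\sup_r r^{-2\nu}J_{\nu+k}^2$ is attained at the first positive maximum $r_k$ of $u_k$. I would show $u_{k+1}(r)\le u_k(r_k)$ via the Sturm comparison applied to the ODE
\[
u''+\frac{2\nu+1}{r}u'+\Bigl(1-\frac{k(k+2\nu)}{r^2}\Bigr)u=0 .
\]
The potential term grows with $k$, so by a Sonin--Pólya argument the successive maxima of $u_k^2+(u_k')^2/(1-k(k+2\nu)/r^2)$ decrease. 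Comparing first peaks across $k$ is the delicate part. If this fails, I would at least prove $\mu_0>\mu_k$ from the $k=0$ bound and get the coefficient $\mu_0-\sup_{k\ge1}\mu_k$, and then obtain $\sup_{k\ge1}\mu_k=\mu_1$ separately.

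\emph{Step 3: stability.} Set $a_k=\|Y_k\|_2^2$. For each $r$,
\[
\sum_k\phi_k(r)a_k\le \phi_0(r)a_0+\mu_1\sum_{k\ge1}a_k\le\mu_0a_0+\mu_1\|f-P_0f\|_2^2 .
\]
Since $\|f\|_2^2=a_0+\|f-P_0f\|_2^2$, this is exactly \eqref{eq:intro-endpoint-stability}, and taking $f$ constant gives $C_{d,\infty}^2=(2\pi)^d\mu_0=\omega_{d-1}^2$.

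For optimality, take $f=Y_1$ to get equality. For the equality cases: if both $a_0>0$ and $\sum_{k\ge1}a_k>0$, the first inequality needs $\phi_0(r)\to\mu_0$, forcing $r\to0$, where $\phi_k(r)\to0$ for $k\ge1$. So the supremum is strictly less than the bound, by compactness and the $\delta(\epsilon)$ gap. If $a_0=0$, equality requires mass only on degrees $k$ with $\mu_k=\mu_1$, hence only $k=1$ by the strict hierarchy, with the suprema attained at a common $r$.
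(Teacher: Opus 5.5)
Your Steps~1 and~3 are the paper's argument, and your treatment of the zero mode is fine: Poisson's integral gives $\phi_0(r)<\mu_0$ for $r>0$. There is, however, a genuine gap exactly where you flag the ``main obstacle''. Nothing in the proposal proves $\mu_k\le\mu_1$ for $k\ge2$, let alone strictly, and this is the entire modal content of the theorem.

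Step~3 uses $\phi_k(r)\le\mu_1$ for every $k\ge1$. If some $\mu_k$ exceeded $\mu_1$, the stated inequality would fail for $f=Y_k\in\mathcal H_k^d$. Your equality analysis for $a_0=0$ also needs $\sup_{k\ge2}\mu_k<\mu_1$, which you attribute to a strict hierarchy you have not established. The fallback ``obtain $\sup_{k\ge1}\mu_k=\mu_1$ separately'' merely restates the missing claim.

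None of the routes you sketch closes the gap:
\begin{itemize}
\item The Sonin--P\'olya quantity $u_k^2+(u_k')^2/(1-k(k+2\nu)r^{-2})$ only shows that, for each fixed $k$, the first peak of $|u_k|$ is its largest. Sturm comparison controls zeros, not peak heights of solutions of different equations.
\item Landau's monotonicity of $\sup_r|J_\alpha(r)|$ is an unweighted statement. It does not transfer to $\sup_r r^{-\nu}|J_\alpha(r)|$ with $\nu$ fixed while the order moves, because it says nothing about where the maxima sit relative to the weight.
\item The Cauchy--Schwarz/addition-theorem idea can be made rigorous: the plane-wave expansion gives $\sum_k\dim\mathcal H_k^d\,\phi_k(r)=\mu_0$, hence $\phi_k\le\mu_0/\dim\mathcal H_k^d$. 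But this only yields $\mu_0>\mu_k$. For $d=2$ it gives $\mu_k\le 1/2$ for every $k\ge1$, which cannot order $\mu_1$ and $\mu_2$.
\end{itemize}

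The missing ingredient is Landau's weighted stationary-value principle, which the paper applies to $r^{-\nu}J_\alpha(r)$ with the order $\alpha>\nu$ varying continuously.
\begin{itemize}
\item \emph{Hypothesis.} The map $r\mapsto F(r)^2\mathcal A_\alpha(r)$ must be decreasing, where $F(r)=r^{-\nu}$ and $\mathcal A_\alpha(r)=\int_0^\infty K_0(2r\sinh t)e^{-2\alpha t}\,dt$ is the positive kernel of Landau's order-variation formula. This is immediate, since $r^{-2\nu}$ is nonincreasing and $\mathcal A_\alpha$ is positive and strictly decreasing.
\item \emph{Conclusion.} The value of $|r^{-\nu}J_\alpha|$ at each positive stationary point strictly decreases as $\alpha$ increases.
\item \emph{From stationary values to the supremum.} For $\alpha>\nu$, $r^{-\nu}J_\alpha$ tends to $0$ at $0$ and at $\infty$, so its supremum is attained at a stationary point. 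Hence $m(\alpha)=\sup_{r>0}|r^{-\nu}J_\alpha(r)|$ is strictly decreasing on $(\nu,\infty)$.
\end{itemize}
Evaluating at $\alpha=\nu+k$ and combining with $\mu_0>\mu_1$, which you already have, gives $\mu_0>\mu_1>\mu_2>\cdots$. In particular $\sup_{k\ge2}\mu_k=\mu_2<\mu_1$.

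With this lemma inserted, your Step~3 and equality analysis go through. In the mixed case $a_0>0$ you need $\sup_{k\ge1}\phi_k(r)\to0$ as $r\to0$, which the Poisson bound provides via $\phi_k(r)\le h_\nu^2(r/2)^{2k}$. Alternatively, argue as the paper does: the supremum is attained at some $r^*$. Either $r^*=0$, where every $\phi_k$ with $k\ge1$ vanishes, or $r^*>0$, where $\phi_0(r^*)<\mu_0$.
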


The paper is organized as follows.  Section 2 collects the preliminary spherical-harmonic reduction and the basic Bessel identities used throughout.  Section 3 proves Theorem~\ref{thm:main-extension} in a proof-driven order: in dimensions $d\ge3$ the discrete modal problem is first reduced to continuous variation in the Bessel order, the required estimates are introduced as the positive and negative parts of the order derivative are controlled, and the single zero-to-first-mode bridge is handled at the end; the circle is then treated by a low-mode argument and an exact-phase critical-tail comparison.  Section 4 proves the sharp finite-exponent stability theorem from an exact deficit decomposition, Section 5 treats the endpoint $q=\infty$ through the ordering of the modal suprema, and Section 6 records the routine analytic justifications and normalization remarks used in the preceding arguments.

\section{Preliminaries}\label{sec:preliminaries}

The purpose of this section is only to fix the spectral reduction and the basic
Bessel notation.  All more specialized auxiliary estimates will be introduced
later, precisely at the point where they enter the proofs.

Let $\mathcal H_k^d$ be the space of spherical harmonics of degree $k$ on $\mathbb S^{d-1}$, and let $P_k$ be the corresponding orthogonal projection.  Put
\[
a_k:=\|P_kf\|_2^2,
\qquad
\sum_{k=0}^\infty a_k=\|f\|_2^2.
\]
For $Y_k\in\mathcal H_k^d$, the Fourier--Bessel identity \cite[Chapter IV, Eq.~(2.19) and Theorem~3.10]{SteinWeiss} (see also the reduction in \cite[Section~2]{COS}) is
\begin{equation}\label{eq:FB}
\widehat{Y_k\sigma}(r\theta)
=(2\pi)^{d/2}(-i)^k r^{-\nu}J_{\nu+k}(r)Y_k(\theta).
\end{equation}
Thus orthogonality on the sphere gives
\begin{equation}\label{eq:spectral-identity}
\|\widehat{f\sigma}\|_{L^q_{\mathrm{rad}}L^2_{\mathrm{ang}}}^2
=(2\pi)^d
\left\|
\sum_{k=0}^\infty a_k r^{-2\nu}J_{\nu+k}(r)^2
\right\|_{L^{q/2}(r^{d-1}dr)}.
\end{equation}
For $q>2$, Minkowski's inequality yields
\begin{align}
\|\widehat{f\sigma}\|_{L^q_{\mathrm{rad}}L^2_{\mathrm{ang}}}^2
&\le
(2\pi)^d\sum_{k=0}^\infty a_k I_{\nu,k}(q)^{2/q}.
\label{eq:Minkowski-modal}
\end{align}
Once \eqref{eq:main-moment-comparison} is known, \eqref{eq:Minkowski-modal} immediately gives the sharp constant \eqref{eq:main-Cdq}.  Moreover, if $a_k>0$ for some $k\ge1$, the comparison with the degree-zero moment is strict for that nonzero mode.

For completeness, one can justify \eqref{eq:spectral-identity} directly for arbitrary $f\in L^2$.  Let $f_N=\sum_{k=0}^N P_kf$.  Since
\[
\|f_N-f\|_1\le \omega_{d-1}^{1/2}\|f_N-f\|_2\to0,
\]
we have $\widehat{f_N\sigma}\to\widehat{f\sigma}$ uniformly in $x$.  Angular Parseval gives the finite modal sum at each radius, and monotone convergence then gives \eqref{eq:spectral-identity}.

At the endpoint $q=\infty$, Cauchy--Schwarz gives
\begin{equation}\label{eq:endpoint-CS}
\|\widehat{f\sigma}(r\,\cdot)\|_{L^2(\mathbb S^{d-1})}
\le
\omega_{d-1}\|f\|_2,
\qquad r\ge0.
\end{equation}
Constants attain equality as $r\to0^+$.  The equality case is rigid.  Indeed, $r\mapsto \|\widehat{f\sigma}(r\,\cdot)\|_2$ is continuous, tends to zero at infinity, and therefore attains its supremum.  At $r=0$, equality in \eqref{eq:endpoint-CS} forces $f$ to be constant.  If equality were attained at some $r>0$, then for almost every $\theta$ one would have
\[
f(\omega)=c_\theta e^{ir\theta\cdot\omega}
\]
almost everywhere in $\omega$.  Taking two distinct directions would force $e^{ir(\theta_1-\theta_2)\cdot\omega}$ to be constant on the sphere, which is impossible.

Finally, the lower endpoint of the admissible range is sharp.  For the constant input, the classical fixed-order expansion \cite[Chapter~VII]{Watson} (see also \cite[Chapter~7]{Lebedev})
\[
J_\nu(r)
=
\sqrt{\frac{2}{\pi r}}
\cos\left(r-\frac{\pi\nu}{2}-\frac\pi4\right)
+O(r^{-3/2})
\]
shows that the radial integral diverges when $q\le2d/(d-1)$.

We collect the estimates needed in both the higher-dimensional and the circle arguments.  All powers on the positive real axis are taken with the real branch.

For $\alpha\ge0$, we use the classical ascending series, differential equation, recurrence relations and integral representation recorded, for example, in \cite[Chapters~II--III]{Watson} and \cite[Chapter~5]{Lebedev}:
\begin{equation}\label{eq:Bessel-series}
J_\alpha(r)
=
\frac{(r/2)^\alpha}{\Gamma(\alpha+1)}
\sum_{m=0}^\infty
\frac{(-r^2/4)^m}{m!(\alpha+1)_m},
\qquad
(\alpha+1)_m:=\frac{\Gamma(\alpha+m+1)}{\Gamma(\alpha+1)}.
\end{equation}
The Bessel equation and the derivative recurrences are
\begin{equation}\label{eq:Bessel-ODE}
J_\alpha''+\frac1rJ_\alpha'
+\left(1-\frac{\alpha^2}{r^2}\right)J_\alpha=0,
\end{equation}
\begin{equation}\label{eq:Bessel-recurrences}
J'_{\alpha-1}
=\frac{\alpha-1}{r}J_{\alpha-1}-J_\alpha,
\qquad
J_\alpha'
=J_{\alpha-1}-\frac\alpha rJ_\alpha.
\end{equation}
For integer $k$ we also use
\begin{equation}\label{eq:Bessel-integral}
J_k(r)
=\frac1{2\pi}\int_{-\pi}^{\pi}e^{i(k\theta-r\sin\theta)}\,d\theta,
\end{equation}
which in particular gives $|J_k(r)|\le1$.

The circle argument uses Landau's uniform order bound \cite{LandauJLMS,LandauEJDE} in the relaxed form
\begin{equation}\label{eq:Landau-amplitude}
|J_\alpha(r)|<\frac7{10}\alpha^{-1/3},
\qquad \alpha>0,\ r>0.
\end{equation}
We write
\begin{equation}\label{eq:m-A}
m(p):=\frac1\pi\int_0^\pi|\cos u|^p\,du,
\qquad
A(p):=2\left(\frac{2}{\pi}\right)^{p/2}m(p).
\end{equation}
In particular,
\begin{equation}\label{eq:cos-moments}
m(3)=\frac4{3\pi},
\qquad
m(4)=\frac38,
\qquad
m(5)=\frac{16}{15\pi}>\frac13.
\end{equation}

These facts form the common background for all three main theorems.  We now
turn directly to the proof of Theorem~\ref{thm:main-extension}; from this point
on, each auxiliary lemma is stated only when it is needed.

\section{Proof of Theorem~\ref{thm:main-extension}}\label{sec:proof-main}

We divide the proof according to the dimension.  In dimensions $d\ge3$ we
first embed the discrete modal sequence into a continuous family in the Bessel
order and reduce the desired ordering to the sign of an order derivative.  The
auxiliary estimates are then introduced only when the positive and negative
parts of that derivative have to be controlled; the single zero-to-first-mode
comparison is handled separately at the end.  The circle has a different
critical tail and is treated in the second subsection.

\subsection{The case $d\ge3$}\label{sec:higher-dimensional}

Fix $d\ge3$, put $\nu:=d/2-1\ge1/2$, and let
\[
q>\frac{4(\nu+1)}{2\nu+1}.
\]
For the rest of this subsection we write
\begin{equation}\label{eq:beta-higher-dimensional}
\beta:=2\nu+1-\nu q,
\qquad
M_\alpha(q,\beta):=\int_0^\infty r^\beta |J_\alpha(r)|^q\,dr.
\end{equation}
Then
\[
I_{\nu,k}(q)=M_{\nu+k}(q,\beta).
\]
The point of introducing the continuous order $\alpha$ is that the discrete
ordering in Theorem~\ref{thm:main-extension} can be obtained from a sign
condition for a single derivative.  We make this reduction explicit before
introducing any of the estimates used to prove that sign.

\begin{proposition}[Continuous-order reduction]\label{prop:continuous-order-reduction}
Suppose that
\[
\partial_\alpha M_\alpha(q,\beta)<0,
\qquad \alpha\ge\nu+1.
\]
Then
\[
I_{\nu,1}(q)>I_{\nu,2}(q)>I_{\nu,3}(q)>\cdots.
\]
If, in addition, $I_{\nu,0}(q)>I_{\nu,1}(q)$, then the full strict ordering
\eqref{eq:full-ordering-intro} follows.  If the derivative is already negative
for every $\alpha\ge\nu$, then the full ordering follows without a separate
first-mode argument.
\end{proposition}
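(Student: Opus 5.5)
The argument is the fundamental theorem of calculus in the order variable, applied with $q$ and $\beta=2\nu+1-\nu q$ held fixed. The only analytic input is that $\alpha\mapsto M_\alpha(q,\beta)$ is finite and continuously differentiable on $[\nu,\infty)$, so the hypothesis on $\partial_\alpha M_\alpha$ can be integrated.

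First, finiteness. For $\alpha\ge\nu$, the ascending series \eqref{eq:Bessel-series} gives $|J_\alpha(r)|^q r^\beta\lesssim r^{q\alpha+\beta}$ near $0$. Since $q\alpha+\beta\ge q\nu+2\nu+1-\nu q=2\nu+1>-1$, the integrand is integrable at the origin. At infinity we use the uniform bound $|J_\alpha(r)|\lesssim r^{-1/2}$, valid for $r$ bounded away from $0$ and $\alpha$ in compact sets. This makes the integrand $O(r^{\beta-q/2})$, and $\beta-q/2<-1$ is exactly the admissibility condition $q>4(\nu+1)/(2\nu+1)$. Both bounds are locally uniform in $\alpha$, so $M_\alpha(q,\beta)$ is finite on $[\nu,\infty)$.

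Next, differentiability. For $q>2$ we have
\[
\partial_\alpha|J_\alpha|^q=q|J_\alpha|^{q-2}J_\alpha\,\partial_\alpha J_\alpha .
\]
Differentiating the series termwise, $\partial_\alpha J_\alpha(r)=O(r^\alpha(1+|\log r|))$ near $0$. At infinity, the integral representation of $\partial_\alpha J_\alpha$ (or Landau's order-variation representation cited later) gives $\partial_\alpha J_\alpha=O(r^{-1/2})$, again locally uniformly in $\alpha$. The logarithm is harmless against the power margin $2\nu+1>-1$. Dominated convergence then shows that $\alpha\mapsto M_\alpha$ is $C^1$ with derivative obtained by differentiating under the integral. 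I expect the only real care to be this uniform large-$r$ bound on $\partial_\alpha J_\alpha$; the full justification can be deferred to Section 6.

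With this in place, for each integer $k\ge1$,
\[
I_{\nu,k+1}(q)-I_{\nu,k}(q)=\int_{\nu+k}^{\nu+k+1}\partial_\alpha M_\alpha(q,\beta)\,d\alpha<0,
\]
because the integrand is continuous and strictly negative on $[\nu+k,\nu+k+1]\subset[\nu+1,\infty)$. This gives $I_{\nu,1}>I_{\nu,2}>\cdots$.

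Adding the hypothesis $I_{\nu,0}(q)>I_{\nu,1}(q)$ completes the chain \eqref{eq:full-ordering-intro}.

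If the derivative is negative on all of $[\nu,\infty)$, the same integral over $[\nu,\nu+1]$ gives $I_{\nu,1}<I_{\nu,0}$ directly. In that case the full ordering follows without any separate first-mode comparison.
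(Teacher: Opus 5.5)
Your proof is correct and matches the paper's: integrate $\partial_\alpha M_\alpha(q,\beta)$ over each unit interval $[\nu+k,\nu+k+1]$ for $k\ge1$, and also over $[\nu,\nu+1]$ for the last assertion. Your finiteness and differentiability checks are the ones the paper defers to Lemma~\ref{lem:order-derivative} and Section~\ref{app:limits}, and they are not essential here: differentiability together with the mean value theorem already gives the strict inequalities, without continuity of the derivative.
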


\begin{proof}
For every integer $k\ge1$,
\[
I_{\nu,k+1}(q)-I_{\nu,k}(q)
=
\int_{\nu+k}^{\nu+k+1}
\partial_\alpha M_\alpha(q,\beta)\,d\alpha.
\]
The assertions are therefore immediate.
\end{proof}

Thus the higher-dimensional proof has a clear division of labor.  The main
part is to prove continuous-order decay after the first mode; only at the end
will we return to the single missing link between orders $\nu$ and $\nu+1$.
To differentiate in the order, we use the positive kernel
\[
K_0(z)=\int_0^\infty e^{-z\cosh t}\,dt,\qquad z>0,
\]
so that $K_0$ is positive and decreasing.  For $\alpha,r>0$ define

\begin{equation}\label{eq:Malpha-Aalpha}
\mathcal A_\alpha(r)
:=
\int_0^\infty K_0(2r\sinh t)e^{-2\alpha t}\,dt.
\end{equation}
The positivity and radial monotonicity of $\mathcal A_\alpha$ follow directly from this representation.  Indeed, the integrand in \eqref{eq:Malpha-Aalpha} is positive, so $\mathcal A_\alpha(r)>0$.  Moreover, using $K_0'(x)=-K_1(x)$ and
\[
K_1(x)=\int_0^\infty e^{-x\cosh s}\cosh s\,ds>0,
\qquad x>0,
\]
we may differentiate under the integral on compact positive-radius intervals to obtain
\[
\mathcal A_\alpha'(r)
=-2\int_0^\infty
\sinh t\,K_1(2r\sinh t)e^{-2\alpha t}\,dt<0.
\]
Thus $r\mapsto\mathcal A_\alpha(r)$ is strictly decreasing on $(0,\infty)$.

\begin{lemma}\label{lem:order-derivative}
For $\alpha>0$ and $r>0$,
\begin{equation}\label{eq:order-variation}
\partial_\alpha J_\alpha(r)
=
r\mathcal A_\alpha'(r)J_\alpha(r)
-2r\mathcal A_\alpha(r)J_\alpha'(r).
\end{equation}
If $q>1$, $\beta+q\alpha>-1$ and $\beta<q/2-1$, then
\begin{equation}\label{eq:order-moment-derivative}
\partial_\alpha M_\alpha(q,\beta)
=
\int_0^\infty
r^\beta |J_\alpha(r)|^qD_\alpha(r)\,dr,
\end{equation}
where
\begin{equation}\label{eq:Dalpha}
D_\alpha(r)
=(q+2)r\mathcal A_\alpha'(r)
+2(\beta+1)\mathcal A_\alpha(r).
\end{equation}
In particular, $\partial_\alpha M_\alpha(q,\beta)<0$ whenever $\beta\le-1$.
\end{lemma}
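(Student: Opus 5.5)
The plan is to prove the pointwise identity \eqref{eq:order-variation} by an ODE uniqueness argument, then obtain \eqref{eq:order-moment-derivative} by differentiating under the integral sign and integrating by parts.

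\emph{The identity \eqref{eq:order-variation}.} Let $L_\alpha w:=w''+\frac1r w'+(1-\frac{\alpha^2}{r^2})w$. Differentiating the Bessel equation \eqref{eq:Bessel-ODE} in $\alpha$ gives
\[
L_\alpha(\partial_\alpha J_\alpha)=\frac{2\alpha}{r^2}J_\alpha .
\]
Put $W:=r\mathcal A_\alpha'J_\alpha-2r\mathcal A_\alpha J_\alpha'$. I would first show that $\mathcal A_\alpha$ satisfies the inhomogeneous third-order ODE of the type satisfied by products of Bessel functions, namely
\[
r^2\mathcal A'''+3r\mathcal A''+(1+4r^2-4\alpha^2)\mathcal A'+4r\mathcal A=c/r
\]
for an explicit constant $c$. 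This should follow by differentiating under the integral in \eqref{eq:Malpha-Aalpha}, using $zK_0''+K_0'-zK_0=0$, and integrating by parts in $t$; the boundary term at $t=0$, where $K_0(2r\sinh t)$ has a logarithmic singularity, produces the right-hand side. Inserting this ODE and \eqref{eq:Bessel-ODE} into $L_\alpha W$ should give $L_\alpha W=\frac{2\alpha}{r^2}J_\alpha$, possibly after fixing $c$ correctly. Hence
\[
\partial_\alpha J_\alpha-W=c_1J_\alpha+c_2Y_\alpha .
\]
Near $0$ both sides are $O(r^{\alpha}|\log r|)$. Indeed, from \eqref{eq:Bessel-series}, $\partial_\alpha J_\alpha\sim J_\alpha(\log(r/2)-\psi(\alpha+1))$. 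On the other side, $\mathcal A_\alpha(r)\sim\frac{1}{2\alpha}(-\log r+C_\alpha)$ and $r\mathcal A_\alpha'\to-\frac1{2\alpha}$ as $r\to0$. The growth comparison forces $c_2=0$, and matching the coefficient of $r^\alpha$ forces $c_1=0$. This matching, and the small-$r$ expansion of $\mathcal A_\alpha$ with its constant $C_\alpha$, is the main computational obstacle; if it becomes messy, I would instead match at $r\to\infty$. There $\mathcal A_\alpha(r)\sim\frac{1}{4r}$, because $K_0(2r\sinh t)$ concentrates at $t\lesssim 1/r$ and $\int_0^\infty K_0=\pi/2$. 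Both sides are then $O(r^{-1/2})$, which kills $Y_\alpha$ only up to a phase check.

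\emph{The moment derivative.} For $\alpha$ in a compact subinterval of $(0,\infty)$, I would dominate
\[
q\,r^\beta|J_\alpha|^{q-1}|\partial_\alpha J_\alpha|
\]
by an integrable function. Near $0$ it is $O(r^{\beta+q\alpha}|\log r|)$, which is integrable since $\beta+q\alpha>-1$. Near $\infty$ it is $O(r^{\beta-q/2}\log r)$, using \eqref{eq:order-variation} with $\mathcal A_\alpha=O(1/r)$ and $\mathcal A_\alpha'=O(r^{-2})$; this is integrable since $\beta<q/2-1$. The decay of $\mathcal A_\alpha$ and $\mathcal A_\alpha'$ follows from the substitution $s=r\sinh t$. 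Then
\[
\partial_\alpha M_\alpha=\int_0^\infty r^\beta q|J_\alpha|^{q-2}J_\alpha\,\partial_\alpha J_\alpha\,dr .
\]
Inserting \eqref{eq:order-variation}, the first term is $q\int r^{\beta+1}\mathcal A_\alpha'|J_\alpha|^q$. In the second term I write $q|J|^{q-2}JJ'=(|J|^q)'$, which is valid for $q>1$, and integrate by parts:
\[
-2\int_0^\infty r^{\beta+1}\mathcal A_\alpha(|J_\alpha|^q)'\,dr
=2(\beta+1)\int_0^\infty r^\beta\mathcal A_\alpha|J_\alpha|^q\,dr+2\int_0^\infty r^{\beta+1}\mathcal A_\alpha'|J_\alpha|^q\,dr .
\]
The boundary terms $r^{\beta+1}\mathcal A_\alpha|J_\alpha|^q$ vanish. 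At $0$ they are $O(r^{\beta+1+q\alpha}|\log r|)$, and at $\infty$ they are $O(r^{\beta-q/2})$, by the same two hypotheses. Adding the two contributions gives \eqref{eq:Dalpha}.

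\emph{The sign statement.} If $\beta\le-1$, then $D_\alpha(r)<0$ for all $r>0$. This is because $\mathcal A_\alpha'<0$ and $\mathcal A_\alpha>0$, as shown just before the lemma, so both terms in \eqref{eq:Dalpha} are $\le0$ and the first is $<0$. Since $|J_\alpha|^q>0$ off a discrete set, the integral in \eqref{eq:order-moment-derivative} is strictly negative.
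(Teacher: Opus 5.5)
Your proposal is correct. For the moment formula and the sign statement it is the paper's argument: differentiate under the integral, write $q|J_\alpha|^{q-2}J_\alpha J_\alpha'=(|J_\alpha|^q)'$, integrate by parts once, and use $\mathcal A_\alpha>0>\mathcal A_\alpha'$.

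The genuine difference is the identity \eqref{eq:order-variation}. The paper does not prove it; it quotes Landau's order-variation formula with $F\equiv1$, $C_\alpha=J_\alpha$. You rederive it by ODE uniqueness, and that derivation does close.
\begin{itemize}
\item With $T[y]:=r^2y'''+3ry''+(1+4r^2-4\alpha^2)y'+4ry$, one finds $L_\alpha W=r^{-1}T[\mathcal A_\alpha]\,J_\alpha$, because the $J_\alpha'$-coefficient cancels identically.
\item Put $g(t):=\sinh t\,K_0'(2r\sinh t)$. The relation $zK_0''+K_0'=zK_0$ turns $T[\mathcal A_\alpha]$ into $2\int_0^\infty(g''-4\alpha^2g)e^{-2\alpha t}\,dt$.
\item Two integrations by parts leave only the $t=0$ boundary term. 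It equals $2\alpha/r$, since $g(0)=-1/(2r)$ (from $zK_1(z)\to1$) and $g'(0)=0$. So your constant is $c=2\alpha$, exactly what is needed.
\item The constant matching at the origin also works. With $\gamma$ Euler's constant and $\psi$ the digamma function, $\mathcal A_\alpha(r)=-\frac{\log r+\gamma}{2\alpha}-\int_0^\infty\log(\sinh t)\,e^{-2\alpha t}\,dt+o(1)$.
\item Moreover $\int_0^\infty\log(\sinh t)\,e^{-2\alpha t}\,dt=\frac{1}{4\alpha^2}-\frac{\log 2+\gamma+\psi(\alpha+1)}{2\alpha}$.
\item These give $W=J_\alpha\bigl(\log\frac{r}{2}-\psi(\alpha+1)\bigr)+o(r^\alpha)$, the same expansion as $\partial_\alpha J_\alpha$. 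Hence $c_2=0$ and then $c_1=0$.
\end{itemize}
Your route buys a self-contained proof that also explains where the kernel $\mathcal A_\alpha$ comes from, at the cost of a third-order ODE computation and an asymptotic matching. The citation buys brevity.

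Some slips to fix:
\begin{itemize}
\item $\mathcal A_\alpha(r)\sim\pi/(4r)$, not $1/(4r)$, because $\int_0^\infty K_0(2rt)\,dt=\pi/(4r)$. With your constant, the fallback matching at infinity would show the leading terms of $W$ and $\partial_\alpha J_\alpha$ differing by a factor $\pi$.
\item The $\log r$ in your bound at infinity is superfluous; the integrand is $O(r^{\beta-q/2})$.
\item For dominated convergence, the bound near $0$ should use the left endpoint $\alpha_-$ of the compact order interval, as in Section~\ref{app:limits}.
\end{itemize}
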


\begin{proof}
Identity \eqref{eq:order-variation} is Landau's order-variation formula,
specialized to $F\equiv1$ and $C_\alpha=J_\alpha$; see
\cite[Eq.~(7), pp.~150--151]{LandauEJDE} and \cite{LandauJLMS}.
Thus, with $g(r):=|J_\alpha(r)|^q$,
\[
\partial_\alpha g
=qr\mathcal A_\alpha'(r)g-2r\mathcal A_\alpha(r)g'(r).
\]
After multiplication by $r^\beta$ and integration, one integration by parts
gives
\[
\begin{aligned}
\partial_\alpha M_\alpha(q,\beta)
&=q\int_0^\infty r^{\beta+1}\mathcal A_\alpha'g\,dr
   -2\int_0^\infty r^{\beta+1}\mathcal A_\alpha g'\,dr\\
&=\int_0^\infty r^\beta g(r)
\left[(q+2)r\mathcal A_\alpha'(r)
      +2(\beta+1)\mathcal A_\alpha(r)\right]\,dr,
\end{aligned}
\]
which is \eqref{eq:order-moment-derivative}--\eqref{eq:Dalpha}.  The
differentiation under the integral sign and the vanishing of the boundary
terms are justified in Section~\ref{app:limits}.

Finally, $\mathcal A_\alpha>0$ and $\mathcal A_\alpha'<0$.  Hence
$D_\alpha(r)<0$ for every $r>0$ when $\beta\le-1$, and the last assertion
follows.
\end{proof}

\begin{remark}
Lemma~\ref{lem:order-derivative} is the mechanism that turns the discrete comparison of adjacent spherical-harmonic modes into a continuous comparison in the Bessel order.  The hypotheses on $\beta$ are exactly those needed to justify differentiation under the integral sign and the integration by parts, while the zeros of $J_\alpha$ create no additional terms because $|x|^q$ is $C^1$ for $q>1$.
\end{remark}

For the spherical parameters in \eqref{eq:beta-higher-dimensional}, the
integrability hypotheses in Lemma~\ref{lem:order-derivative} are precisely the
usual convergence conditions for the Bessel moments; the routine
justifications are recorded in Section~\ref{app:limits}.  The range
\[
\beta\le-1
\]
is therefore settled immediately.  In this range, Lemma~\ref{lem:order-derivative} gives
\[
\partial_\alpha M_\alpha(q,\beta)<0,
\qquad \alpha\ge\nu,
\]
and Proposition~\ref{prop:continuous-order-reduction} proves the whole chain.
No further kernel estimate is needed in this range.

We may henceforth assume
\[
\beta>-1,
\qquad b:=\beta+1>0.
\]
Now the second term in $D_\alpha$ is positive, so the integrand in
\eqref{eq:order-moment-derivative} need not have a fixed sign.  Rather than
trying to locate its exact sign changes, we use a positive-versus-negative
balance.  Our first task is to find a cutoff $R_0$ such that
\[
D_\alpha(r)\le0,
\qquad r\ge R_0.
\]
Once such a cutoff is available, define
\[
P_\alpha
:=
\int_0^{R_0}r^\beta|J_\alpha(r)|^q[D_\alpha(r)]_+\,dr.
\]
Then $P_\alpha$ contains all possible positive contribution.  On the
negative tail we are free to retain only a convenient interval
$I_\alpha\subset[R_0,\infty)$ and define
\[
N_\alpha
:=
\int_{I_\alpha}r^\beta|J_\alpha(r)|^q[-D_\alpha(r)]\,dr.
\]
Then
\begin{equation}\label{eq:PN-reduction}
\partial_\alpha M_\alpha(q,\beta)
\le P_\alpha-N_\alpha.
\end{equation}
Consequently, the continuous-order problem has been reduced to the concrete
inequality $P_\alpha<N_\alpha$.  The rest of the argument is organized around
exactly these two quantities: first we force a negative tail, then estimate
$P_\alpha$ from above and $N_\alpha$ from below.

To force the tail sign, write
\[
\tau:=\frac{2b}{q+2},
\qquad
L_\alpha(r):=-\frac{r\mathcal A_\alpha'(r)}{\mathcal A_\alpha(r)}.
\]
Then
\begin{equation}\label{eq:Dalpha-Lalpha}
D_\alpha(r)
=(q+2)\mathcal A_\alpha(r)(\tau-L_\alpha(r)).
\end{equation}
Thus it remains to obtain a lower bound for $L_\alpha$.  The comparison is
based on the classical Chebyshev integral inequality, which we record now in
exactly the form needed for this purpose.

\begin{lemma}[Chebyshev integral inequality]\label{lem:positive-covariance}
Let $\mu$ be a probability measure on a real interval.  If $f$ and $g$ are nondecreasing and the relevant products are integrable, then
\begin{equation}\label{eq:covariance}
\int fg\,d\mu-
\int f\,d\mu\int g\,d\mu
=
\frac12\iint
(f(x)-f(y))(g(x)-g(y))\,d\mu(x)d\mu(y)
\ge0.
\end{equation}
Consequently, if $R\ge0$ is nondecreasing and $0<\int R\,d\mu<\infty$, then
\[
\frac{\int Rg\,d\mu}{\int R\,d\mu}
\ge
\int g\,d\mu.
\]
\end{lemma}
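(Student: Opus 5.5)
The identity in \eqref{eq:covariance} is verified by expanding the double integral. Since $\mu$ is a probability measure,
\[
\iint f(x)g(x)\,d\mu(x)d\mu(y)=\int fg\,d\mu,
\qquad
\iint f(x)g(y)\,d\mu(x)d\mu(y)=\int f\,d\mu\int g\,d\mu .
\]
Writing
\[
(f(x)-f(y))(g(x)-g(y))=f(x)g(x)+f(y)g(y)-f(x)g(y)-f(y)g(x),
\]
and integrating each of the four terms against $\mu\otimes\mu$, the double integral equals $2\int fg\,d\mu-2\int f\,d\mu\int g\,d\mu$. Halving gives the stated equality. Fubini is legitimate because the hypotheses ensure $f,g,fg\in L^1(\mu)$, so each of the four pieces is absolutely integrable on the product space. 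For nonnegativity, note that when $f$ and $g$ are both nondecreasing on the interval, $f(x)-f(y)$ and $g(x)-g(y)$ have the same sign for every pair $x,y$ (both are $\ge0$ if $x\ge y$, both $\le0$ if $x\le y$). The integrand is therefore pointwise nonnegative, and so is the integral.

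For the consequence, apply \eqref{eq:covariance} with $f=R$. This gives
\[
\int Rg\,d\mu\ge\int R\,d\mu\int g\,d\mu.
\]
Dividing by the positive finite number $\int R\,d\mu$ yields the stated ratio bound.

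The only point needing care is integrability in the corollary. There I would assume that $Rg$ and $g$ are $\mu$-integrable, which is the ``relevant products are integrable'' clause. If one wants to allow unbounded monotone functions, a truncation argument handles it. Replace $g$ by $\max(\min(g,n),-n)$, which is still nondecreasing, and likewise for $R$. Then pass to the limit by dominated convergence on the integrable side, or by monotone convergence, since $R\ge0$. No step here is a genuine obstacle; the lemma is classical, and the main thing is to state the integrability hypotheses precisely so that the later application to $L_\alpha$, with $\mu$ a normalized kernel measure in $t$, fits the hypotheses.
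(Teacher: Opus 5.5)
Your proof is correct and follows the same route as the paper. The paper cites the classical Chebyshev inequality and calls the double-integral formula the standard covariance representation, which you verify explicitly by expanding and applying Fubini, with nonnegativity coming from the pointwise similar ordering; it then gets the corollary exactly as you do, by taking $f=R$ and dividing by $\int R\,d\mu$. Your truncation remark is harmless but not needed under the stated integrability hypotheses.
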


This is the classical Chebyshev integral inequality for similarly ordered functions; see, for example, \cite[Chapter~10]{MitrinovicPecaricFink}.  The double-integral identity in \eqref{eq:covariance} is the standard covariance representation, and the last assertion follows by applying it with $f=R$ and dividing by $\int R\,d\mu$.  These observations prove Lemma~\ref{lem:positive-covariance}.

We now develop the lower bound needed for the tail sign in
\eqref{eq:Dalpha-Lalpha}.  The quantity $L_\alpha$ is the negative
logarithmic derivative of $\mathcal A_\alpha$ with respect to the logarithmic
radial variable:
\[
L_\alpha(r)
=
-\frac{d}{d\log r}\log \mathcal A_\alpha(r).
\]
Since $\mathcal A_\alpha$ is represented by a positive kernel, the
Chebyshev inequality allows us to compare this logarithmic derivative with
that of a one-variable model.  To formulate the comparison, put
\begin{equation}\label{eq:F-ell-chi}
F(z):=\int_0^\infty K_0(2zu)e^{-2u}\,du,
\qquad
\ell(z):=-\frac{zF'(z)}{F(z)},
\qquad
\chi_\alpha:=\sqrt{\alpha^2+\frac14}.
\end{equation}
The positive integral representation of $K_0$ first reduces $F$ to an elementary integral.  Indeed, by Tonelli's theorem,
\[
\begin{aligned}
F(z)
&=\int_0^\infty\int_0^\infty
 e^{-2zu\cosh t}e^{-2u}\,dt\,du\\
&=\frac12\int_0^\infty\frac{dt}{1+z\cosh t}.
\end{aligned}
\]
To evaluate the last integral, set
\[
v:=\tanh(t/2).
\]
Then
\[
\cosh t=\frac{1+v^2}{1-v^2},
\qquad
dt=\frac{2\,dv}{1-v^2},
\]
and $t\in[0,\infty)$ corresponds to $v\in[0,1)$.  Thus
\[
F(z)
=\int_0^1\frac{dv}{(1+z)+(z-1)v^2}.
\]
If $0<z<1$, this is
\[
\frac1{\sqrt{1-z^2}}
\operatorname{artanh}\sqrt{\frac{1-z}{1+z}}.
\]
The identity
\[
2\operatorname{artanh}\sqrt{\frac{1-z}{1+z}}
=\operatorname{arcosh}\frac1z
\]
follows because both sides are positive and have hyperbolic cosine $1/z$.
If $z>1$, the same rational integral gives
\[
\frac1{\sqrt{z^2-1}}
\arctan\sqrt{\frac{z-1}{z+1}},
\]
Since both sides lie in $(0,\pi/2)$, the double-angle formula for cosine gives
\[
2\arctan\sqrt{\frac{z-1}{z+1}}
=\arccos\frac1z.
\]
At $z=1$ the integral is simply $\int_0^1(2)^{-1}dv=1/2$.  Hence
\begin{equation}\label{eq:F-explicit}
F(z)
=\frac12\int_0^\infty\frac{dt}{1+z\cosh t}
=
\begin{cases}
\displaystyle
\frac{\operatorname{arcosh}(1/z)}{2\sqrt{1-z^2}},&0<z<1,\\[7pt]
\displaystyle \frac12,&z=1,\\[7pt]
\displaystyle
\frac{\arccos(1/z)}{2\sqrt{z^2-1}},&z>1.
\end{cases}
\end{equation}

The kernel $\mathcal A_\alpha$ itself is classical in Landau's order-variation formula \cite{LandauEJDE,LandauJLMS}, but the quantitative comparison with the model $F$ and $\ell$ below is the form required for the present argument.

\begin{lemma}\label{lem:kernel-comparison}
For all $\alpha,r>0$,
\begin{equation}\label{eq:kernel-comparison}
L_\alpha(r):=-\frac{r\mathcal A_\alpha'(r)}{\mathcal A_\alpha(r)}
\ge
\ell(r/\chi_\alpha),
\end{equation}
and
\begin{equation}\label{eq:A-lower-model}
\mathcal A_\alpha(r)
\ge
\frac1{\chi_\alpha}F(r/\chi_\alpha)
\ge
\frac{\pi}{4(r+\chi_\alpha)}.
\end{equation}
The function $\ell$ is increasing on $(0,\infty)$ and satisfies
\begin{equation}\label{eq:ell-bounds}
\ell(1/\sqrt2)>\frac35,
\qquad
\ell(z)\ge1-\frac{2}{3(z+1)},
\qquad z\ge1.
\end{equation}
\end{lemma}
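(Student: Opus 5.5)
The plan is to compare $\mathcal A_\alpha$ directly with the one-variable model by a change of variables, and then use Lemma~\ref{lem:positive-covariance} twice: once for the value of $\mathcal A_\alpha$, once for its logarithmic derivative.

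\textbf{Change of variables.} In \eqref{eq:Malpha-Aalpha} substitute $s=\sinh t$, so that
\[
\mathcal A_\alpha(r)=\int_0^\infty K_0(2rs)\,w_\alpha(s)\,ds,
\qquad
w_\alpha(s)=\frac{e^{-2\alpha\operatorname{arsinh}s}}{\sqrt{1+s^2}}.
\]
In $F$ substitute $u=\chi_\alpha s$, which gives
\[
\chi_\alpha^{-1}F(r/\chi_\alpha)=\int_0^\infty K_0(2rs)e^{-2\chi_\alpha s}\,ds .
\]
Set $R(s):=w_\alpha(s)e^{2\chi_\alpha s}$, so $R(0)=1$. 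Then
\[
(\log R)'(s)=2\chi_\alpha-2\alpha c-c\sqrt{1-c^2},
\qquad c:=(1+s^2)^{-1/2}\in(0,1].
\]
Since $c\le1$, Cauchy--Schwarz gives
\[
2\alpha c+c\sqrt{1-c^2}\le 2\alpha c+\sqrt{1-c^2}\le\sqrt{4\alpha^2+1}=2\chi_\alpha .
\]
Hence $R$ is nondecreasing and $R\ge1$. This is exactly why $\chi_\alpha=\sqrt{\alpha^2+1/4}$ appears.

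\textbf{Lower bound on $\mathcal A_\alpha$.} Because $R\ge1$, we get $\mathcal A_\alpha(r)\ge\chi_\alpha^{-1}F(r/\chi_\alpha)$ at once. For the elementary bound, use the rational form $F(z)=\int_0^1 dv/\bigl((1+z)+(z-1)v^2\bigr)$. Since $(z-1)v^2\le(z+1)v^2$ for all $z>0$,
\[
F(z)\ge\frac{1}{1+z}\int_0^1\frac{dv}{1+v^2}=\frac{\pi}{4(1+z)}.
\]
Taking $z=r/\chi_\alpha$ gives \eqref{eq:A-lower-model}.

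\textbf{Comparison of logarithmic derivatives.} Let $\mu$ be the probability measure proportional to $K_0(2rs)e^{-2\chi_\alpha s}\,ds$, and put $g(s):=2rs\,K_1(2rs)/K_0(2rs)$. Differentiating under the integral gives
\[
L_\alpha(r)=\frac{\int Rg\,d\mu}{\int R\,d\mu},
\qquad
\int g\,d\mu=\ell(r/\chi_\alpha).
\]
The second identity holds by the same scaling, since $-zF'(z)=\int 2zu\,K_1(2zu)e^{-2u}\,du$. Lemma~\ref{lem:positive-covariance}, with $R$ nondecreasing, then yields \eqref{eq:kernel-comparison}, provided $g$ is nondecreasing. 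So the only missing input is that $x\mapsto xK_1(x)/K_0(x)$ increases on $(0,\infty)$. This is classical (Baricz-type monotonicity of modified Bessel ratios). If a self-contained proof is wanted, I would write
\[
-\frac{d}{dx}\log K_0=\mathbb E_x[\cosh t],
\qquad
\mathbb E_x \text{ taken w.r.t. } e^{-x\cosh t}\,dt\Big/K_0(x),
\]
and show that $\mathbb E_x[\cosh t]-x\operatorname{Var}_x(\cosh t)\ge0$, for instance via an integration by parts in $t$. I expect this step to be the main obstacle.

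\textbf{Properties of $\ell$.} From $F=\tfrac12\int_0^\infty dt/(1+z\cosh t)$ we get
\[
\ell(z)=\frac{\int_0^\infty z\cosh t\,(1+z\cosh t)^{-2}\,dt}{\int_0^\infty(1+z\cosh t)^{-1}\,dt},
\]
which is the mean of $h_z(t):=z\cosh t/(1+z\cosh t)$ under the probability density proportional to $(1+z\cosh t)^{-1}$. To prove monotonicity, I would differentiate in $z$ and again reduce to a covariance sign by Lemma~\ref{lem:positive-covariance}. Alternatively, I would work directly with the explicit formula \eqref{eq:F-explicit}: on $z>1$, writing $\theta=\arccos(1/z)$ gives
\[
\ell=\frac{z^2}{z^2-1}-\frac{z}{\sqrt{z^2-1}\,\theta},
\]
with an analogous formula on $z<1$; checking the sign of the derivative is then a one-variable calculus task. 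The numerical bound $\ell(1/\sqrt2)>3/5$ follows by substituting $\operatorname{arcosh}\sqrt2=\log(1+\sqrt2)$ into the $z<1$ formula. The bound $\ell(z)\ge1-\tfrac{2}{3(z+1)}$ for $z\ge1$ follows from the $\theta$-form above, using $\sin\theta\le\theta$ together with a polynomial inequality in $z$.
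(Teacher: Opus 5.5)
Your treatment of $\mathcal A_\alpha$ is correct and is essentially the paper's argument. This covers the substitution $s=\sinh t$, the monotonicity of $R=w_\alpha e^{2\chi_\alpha s}$ via Cauchy--Schwarz, and the Chebyshev comparison. It also covers the bound $F(z)\ge\pi/(4(1+z))$: your rational-form estimate is the paper's $1+z\cosh t\le(1+z)\cosh t$ after $v=\tanh(t/2)$. The step you call the main obstacle is actually immediate. The identity $(xK_1)'=-xK_0$ gives $\frac{d}{dx}\frac{xK_1}{K_0}=x\bigl[(K_1/K_0)^2-1\bigr]>0$, since $K_1>K_0$.

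The genuine gap is in the properties of $\ell$. First, your $\theta$-form is wrong. Since $d\theta/dz=1/(z\sqrt{z^2-1})$, the correct formula is
\[
\ell(z)=\frac{z^2}{z^2-1}-\frac{1}{\sqrt{z^2-1}\,\theta},\qquad \theta=\arccos(1/z).
\]
Your extra factor $z$ gives $\ell(1^+)=1/6$ and $\ell(\infty)=1-2/\pi$. This contradicts $\ell(1)=2/3$ and makes the target bound false for large $z$. If the same slip enters the $z<1$ branch, the check at $1/\sqrt2$ also fails. The correct value $\sqrt2/\operatorname{arsinh}1-1\approx0.6046$ clears $3/5$ only narrowly.

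More seriously, even with the correct formula, $\sin\theta\le\theta$ cannot give $\ell(z)\ge1-\frac{2}{3(z+1)}$. The only information it yields is $\theta\ge\sqrt{z^2-1}/z$, hence $\ell(z)\ge z/(z+1)$. This falls short of the target by exactly $\frac{1}{3(z+1)}$ for every $z$; already at $z=1$ it gives $1/2<2/3$. So no auxiliary polynomial inequality in $z$ can close it. With $z=\sec t$, the target is equivalent to Cusa's inequality $3\sin t\le t(2+\cos t)$. The paper proves this from $H(t)=t(2+\cos t)-3\sin t$, using $H(0)=H'(0)=H''(0)=0$ and $H'''(t)=t\sin t\ge0$.

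The monotonicity of $\ell$ is also not established. Differentiating your mean representation does not reduce to a Chebyshev sign. Since $z\partial_zh_z=h_z(1-h_z)$, and the log-density has $z$-derivative $-h_z/z$ plus a $t$-independent constant, you get
\[
z\ell'(z)=\mathbb E_z\bigl[h_z(1-h_z)\bigr]-\operatorname{Var}_z(h_z).
\]
This is a positive term minus a variance, and Chebyshev's inequality cannot sign it. For a general law on $[0,1]$ it can even be negative, as for a two-point law at $0$ and $1$.

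Your fallback, differentiating the explicit formulas, is the paper's route, but that computation is the whole content and is left undone. The paper finds that $\frac{d}{dt}\ell(\operatorname{sech}t)$ and $\frac{d}{dt}\ell(\sec t)$ carry the brackets $\sinh^2t+t\tanh t-2t^2$ and $\sin^2t+t\tan t-2t^2$. It proves both nonnegative from $\sinh^2t\ge t^2+t^4/3$, $\tanh t\ge t-t^3/3$, $\sin^2t\ge t^2-t^4/3$ and $\tan t\ge t+t^3/3$. It then glues the two branches using that $F$ is $C^1$ at $z=1$, with $\ell(1)=2/3$.
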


\begin{proof}
We divide the proof into three steps.

\emph{Step 1: comparison of the kernel with the model weight.}
Put $y:=\sinh t$ in \eqref{eq:Malpha-Aalpha}.  Since $dt=dy/\sqrt{1+y^2}$,
\[
\mathcal A_\alpha(r)
=
\int_0^\infty K_0(2ry)w_\alpha(y)\,dy,
\qquad
w_\alpha(y)
:=
\frac{e^{-2\alpha\operatorname{arsinh}y}}{\sqrt{1+y^2}}.
\]
A logarithmic derivative gives
\[
-\frac{w_\alpha'(y)}{w_\alpha(y)}
=
\frac{2\alpha}{\sqrt{1+y^2}}+\frac{y}{1+y^2}.
\]
Write $y=\tan u$, $0\le u<\pi/2$.  Then
\[
-\frac{w_\alpha'}{w_\alpha}
=2\alpha\cos u+\sin u\cos u
\le2\alpha\cos u+\sin u.
\]
By Cauchy--Schwarz,
\[
2\alpha\cos u+\sin u
\le\sqrt{4\alpha^2+1}
=2\chi_\alpha.
\]
Therefore
\[
\frac d{dy}\log\bigl(w_\alpha(y)e^{2\chi_\alpha y}\bigr)
=\frac{w_\alpha'}{w_\alpha}+2\chi_\alpha\ge0.
\]
Thus
\[
R_\alpha(y):=w_\alpha(y)e^{2\chi_\alpha y}
\]
is nondecreasing.  Since $R_\alpha(0)=1$, we have
\[
w_\alpha(y)\ge e^{-2\chi_\alpha y}.
\]
Consequently
\[
\begin{aligned}
\mathcal A_\alpha(r)
&\ge\int_0^\infty K_0(2ry)e^{-2\chi_\alpha y}\,dy\\
&=\frac1{\chi_\alpha}
\int_0^\infty K_0\!\left(2\frac r{\chi_\alpha}u\right)e^{-2u}\,du\\
&=\frac1{\chi_\alpha}F(r/\chi_\alpha).
\end{aligned}
\]
For the second lower bound in \eqref{eq:A-lower-model}, use the representation
\[
F(z)=\frac12\int_0^\infty\frac{dt}{1+z\cosh t}.
\]
Since $\cosh t\ge1$,
\[
1+z\cosh t\le(1+z)\cosh t,
\]
so
\[
F(z)
\ge\frac1{2(1+z)}\int_0^\infty\operatorname{sech}t\,dt
=\frac\pi{4(1+z)}.
\]
Substituting $z=r/\chi_\alpha$ proves
\[
\mathcal A_\alpha(r)\ge\frac\pi{4(r+\chi_\alpha)}.
\]

\emph{Step 2: comparison of logarithmic derivatives.}
Define
\[
\psi(x):=\frac{xK_1(x)}{K_0(x)}.
\]
The identities $K_0'=-K_1$ and $(xK_1)'=-xK_0$ imply
\[
\psi'(x)
=x\left[\left(\frac{K_1(x)}{K_0(x)}\right)^2-1\right].
\]
To see that this is positive, use the positive integral representations
\[
K_0(x)=\int_0^\infty e^{-x\cosh t}\,dt,
\qquad
K_1(x)=\int_0^\infty e^{-x\cosh t}\cosh t\,dt.
\]
Hence $K_1(x)>K_0(x)>0$ for $x>0$, and so $\psi'(x)>0$.

Now write
\[
w_\alpha(y)=R_\alpha(y)e^{-2\chi_\alpha y}
\]
and normalize
\[
K_0(2ry)e^{-2\chi_\alpha y}\,dy
\]
to a probability measure $\mu$.  Differentiating $\mathcal A_\alpha$ in $r$ and using $K_0'=-K_1$ gives
\[
-r\mathcal A_\alpha'(r)
=
\int_0^\infty K_0(2ry)\psi(2ry)w_\alpha(y)\,dy.
\]
Therefore
\[
L_\alpha(r)
=
\frac{\int R_\alpha(y)\psi(2ry)\,d\mu(y)}{\int R_\alpha\,d\mu}.
\]
Both $R_\alpha(y)$ and $\psi(2ry)$ are nondecreasing in $y$.  Lemma \ref{lem:positive-covariance} therefore gives
\[
L_\alpha(r)
\ge
\int\psi(2ry)\,d\mu(y).
\]
It remains to identify the last average.  Put $z:=r/\chi_\alpha$ and change variables $u:=\chi_\alpha y$.  Since
\[
F'(z)=-2\int_0^\infty uK_1(2zu)e^{-2u}\,du,
\]
we find
\[
\int\psi(2ry)\,d\mu(y)
=
\frac{-zF'(z)}{F(z)}
=\ell(z)
=\ell(r/\chi_\alpha).
\]
This proves \eqref{eq:kernel-comparison}.

\emph{Step 3: monotonicity and explicit lower bounds for $\ell$.}
For $0<z<1$ write $z:=\operatorname{sech}t$.  From \eqref{eq:F-explicit},
\[
F(\operatorname{sech}t)=\frac t2\coth t.
\]
Using $\ell(z)=-d\log F/d\log z$ gives
\[
\ell(\operatorname{sech}t)
=\frac{\coth t}{t}-\operatorname{csch}^2t.
\]
Similarly, for $z>1$ write $z:=\sec t$, $0<t<\pi/2$.  Then
\[
F(\sec t)=\frac t2\cot t,
\qquad
\ell(\sec t)=\csc^2t-\frac{\cot t}{t}.
\]
Differentiation yields
\begin{align}
\frac d{dt}\ell(\operatorname{sech}t)
&=
-\frac{\cosh t}{t^2\sinh^3t}
\bigl(\sinh^2t+t\tanh t-2t^2\bigr),\label{eq:ell-hyp-derivative}\\
\frac d{dt}\ell(\sec t)
&=
\frac{\cos t}{t^2\sin^3t}
\bigl(\sin^2t+t\tan t-2t^2\bigr).\label{eq:ell-trig-derivative}
\end{align}
The first bracket is nonnegative because
\[
\sinh^2t\ge t^2+\frac{t^4}{3},
\qquad
\tanh t\ge t-\frac{t^3}{3}.
\]
For the second bracket we use
\[
\sin^2t\ge t^2-\frac{t^4}{3},
\qquad
\tan t\ge t+\frac{t^3}{3}.
\]
For instance,
\[
\frac d{dt}\left(\tan t-t-\frac{t^3}{3}\right)
=\tan^2t-t^2\ge0,
\]
while
\[
\frac d{dt}\left(\tanh t-t+\frac{t^3}{3}\right)
=t^2-\tanh^2t\ge0.
\]
Hence $t\mapsto\ell(\operatorname{sech}t)$ is nonincreasing, while $t\mapsto\ell(\sec t)$ is nondecreasing.  Since $\operatorname{sech}t$ decreases and $\sec t$ increases, $\ell$ is increasing on both $(0,1)$ and $(1,\infty)$.  From the rational representation
\[
F(z)=\int_0^1\frac{dv}{1+z+(z-1)v^2}
\]
one sees directly that $F$ is $C^1$ at $z=1$; indeed $F(1)=1/2$ and $F'(1)=-1/3$, so $\ell(1)=2/3$.  Thus $\ell$ is increasing on all of $(0,\infty)$.

At $z=1/\sqrt2$, the hyperbolic parameter satisfies $\sinh t=1$, hence
\[
\ell(1/\sqrt2)=\frac{\sqrt2}{\operatorname{arsinh}1}-1>\frac35;
\]
the last numerical inequality is elementary.  Finally, for $z=\sec t\ge1$, the bound
\[
\ell(z)\ge1-\frac{2}{3(z+1)}
\]
is equivalent, after elementary simplification, to
\[
3\sin t\le t(2+\cos t).
\]
If $H(t):=t(2+\cos t)-3\sin t$, then
\[
H(0)=H'(0)=H''(0)=0,
\qquad
H'''(t)=t\sin t\ge0.
\]
Therefore $H''\ge0$, then $H'\ge0$, and finally $H\ge0$ on $[0,\pi/2)$.  This completes the proof of \eqref{eq:ell-bounds}.
\end{proof}

We can now fix the negative tail for the actual spherical parameters.  Since
$b>0$ and
\[
b=2-\nu(q-2),
\]
we have $2<q<6$.  The convergence condition at infinity gives $b<q/2$, while
$\nu\ge1/2$ gives
\[
b\le3-\frac q2.
\]
Moreover, for every order relevant after the first mode,
\begin{equation}\label{eq:first-post-bridge-order}
\alpha\ge\nu+1=\frac{q-b}{q-2}\ge\frac32.
\end{equation}
Consequently
\[
\tau=\frac{2b}{q+2}
\le
\min\left\{\frac{q}{q+2},\frac{6-q}{q+2}\right\}
\le\frac35.
\]
Set $\sigma:=3/4$ and recall $\chi_\alpha=(\alpha^2+1/4)^{1/2}$.  Since
$\alpha\ge3/2$,
\[
\frac{\sigma\alpha}{\chi_\alpha}
\ge\frac{9}{4\sqrt{10}}
>\frac1{\sqrt2}.
\]
Lemma~\ref{lem:kernel-comparison}, the monotonicity of $\ell$, and
\eqref{eq:ell-bounds} therefore give
\[
L_\alpha(r)
\ge\ell(r/\chi_\alpha)
\ge\ell(\sigma\alpha/\chi_\alpha)
>\ell(1/\sqrt2)
>\frac35
\ge\tau,
\qquad r\ge\sigma\alpha.
\]
Hence
\begin{equation}\label{eq:post-bridge-tail-negative}
D_\alpha(r)<0,
\qquad r\ge\sigma\alpha.
\end{equation}
This settles the tail-sign step in the positive--negative balance and fixes
the cutoff as
\[
R_0:=\sigma\alpha=\frac{3\alpha}{4}.
\]

With the cutoff fixed, we next control the possible positive contribution
$P_\alpha$, beginning with $[D_\alpha]_+$ on $(0,R_0)$.  The following
logarithmic compensation estimate is tailored to exactly this step.

\begin{lemma}\label{lem:compensation}
Let $\alpha>0$, $q>1$ and $\beta>-1$, put
\[
b=\beta+1>0,
\qquad
\tau:=\frac{2b}{q+2},
\]
and let $D_\alpha$ be defined by \eqref{eq:Dalpha}.  If $R_0>0$ and
$D_\alpha(R_0)\le0$, then
\begin{equation}\label{eq:compensation}
[D_\alpha(r)]_+
\le
\frac b\alpha\log\frac{R_0}{r},
\qquad 0<r<R_0.
\end{equation}
\end{lemma}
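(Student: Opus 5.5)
The plan is to compare $D_\alpha(r)$ with $D_\alpha(R_0)$ and to control the difference using only two monotonicity facts about the kernel $x\mapsto xK_1(x)$. The target is the pointwise inequality
\[
D_\alpha(r)-D_\alpha(R_0)\le 2b\bigl(\mathcal A_\alpha(r)-\mathcal A_\alpha(R_0)\bigr)
\le\frac b\alpha\log\frac{R_0}{r},
\qquad 0<r<R_0 .
\]
Given this, the lemma follows at once. If $D_\alpha(r)\le0$, then $[D_\alpha(r)]_+=0$ and there is nothing to prove. If $D_\alpha(r)>0$, the hypothesis $D_\alpha(R_0)\le0$ gives $[D_\alpha(r)]_+=D_\alpha(r)\le D_\alpha(r)-D_\alpha(R_0)$, and the displayed inequality finishes the argument.

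\emph{Step 1: the derivative term is monotone in $r$.} Differentiating \eqref{eq:Malpha-Aalpha} as in the text gives
\[
r\mathcal A_\alpha'(r)=-\int_0^\infty x_t K_1(x_t)\,e^{-2\alpha t}\,dt,
\qquad x_t:=2r\sinh t .
\]
From $(xK_1)'=-xK_0<0$, the function $x\mapsto xK_1(x)$ is strictly decreasing on $(0,\infty)$. For each fixed $t>0$, $x_t$ increases with $r$, so the integrand decreases with $r$. Hence $r\mapsto r\mathcal A_\alpha'(r)$ is nondecreasing, which gives $r\mathcal A_\alpha'(r)\le R_0\mathcal A_\alpha'(R_0)$ for $r<R_0$. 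Inserting this into \eqref{eq:Dalpha} yields the first inequality in the target, since $q+2>0$.

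\emph{Step 2: the logarithmic bound for $\mathcal A_\alpha$.} I would use $xK_1(x)\le\lim_{x\to0^+}xK_1(x)=1$. This follows from the monotonicity in Step 1 together with the small-argument behaviour $K_1(x)\sim 1/x$. The integral representation in Step 1 then gives, for every $s>0$,
\[
-s\mathcal A_\alpha'(s)\le\int_0^\infty e^{-2\alpha t}\,dt=\frac1{2\alpha}.
\]
Consequently
\[
\mathcal A_\alpha(r)-\mathcal A_\alpha(R_0)=\int_r^{R_0}\bigl(-\mathcal A_\alpha'(s)\bigr)\,ds\le\frac1{2\alpha}\int_r^{R_0}\frac{ds}{s}=\frac1{2\alpha}\log\frac{R_0}{r}.
\]
Multiplying by $2b$ gives the second inequality in the target.

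\emph{Main obstacle.} The main point is to see that one should not estimate $D_\alpha(r)$ directly: the crude bound $D_\alpha\le2b\mathcal A_\alpha$ blows up like $\log(1/r)$ with the wrong constant and ignores the hypothesis on $R_0$. Subtracting the nonpositive value $D_\alpha(R_0)$ and using the monotonicity of $r\mathcal A_\alpha'$ is what produces the sharp factor $b/\alpha$. The only technical points are the routine differentiation under the integral sign, already justified in the text, and the limit $xK_1(x)\to1$. The limit follows from $K_1(x)=\int_0^\infty e^{-x\cosh s}\cosh s\,ds$ by the substitution $u=x\sinh s$, which gives $xK_1(x)=\int_0^\infty e^{-\sqrt{x^2+u^2}}\,du\to1$.
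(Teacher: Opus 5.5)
Your proof is correct and uses the same two ingredients as the paper. The first is the monotonicity of $xK_1(x)$, from $(xK_1)'=-xK_0$, which makes $r\mathcal A_\alpha'(r)$ nondecreasing; the second is the bound $xK_1(x)\le1$, which gives $-r\mathcal A_\alpha'(r)\le 1/(2\alpha)$. The paper combines these into a lower bound $-\tau/2$ for the $\log z$-derivative of the rescaled quantity $G_\alpha=\tau F_\alpha-H_\alpha$ and then integrates, whereas you split $D_\alpha(r)-D_\alpha(R_0)$ directly into its two differences — the same argument in integrated rather than differential form.
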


\begin{proof}
Let
\[
F_\alpha(z):=\alpha\mathcal A_\alpha(\alpha z),
\qquad
H_\alpha(z):=-zF_\alpha'(z).
\]
We first rewrite these functions in a scale adapted to the order.  Starting from the definition of $\mathcal A_\alpha$ and putting $u=\alpha t$ gives
\[
F_\alpha(z)
=\int_0^\infty
K_0\!\left(2\alpha z\sinh(u/\alpha)\right)e^{-2u}\,du.
\]
Set
\[
x:=2\alpha z\sinh(u/\alpha).
\]
Since $z\partial_zx=x$ and $K_0'=-K_1$, differentiation under the integral gives
\[
H_\alpha(z)
=
\int_0^\infty xK_1(x)e^{-2u}\,du.
\]
The differentiation under the integral sign is justified by the Dominated Convergence Theorem, since
$xK_1(x)$ is bounded on $(0,\infty)$, as will be seen from the
positive representation below.
The elementary positive representation
\[
xK_1(x)=\int_0^\infty e^{-\sqrt{x^2+y^2}}\,dy
\]
follows from $K_1(x)=\int_0^\infty e^{-x\cosh t}\cosh t\,dt$ after the change of variables $y=x\sinh t$.  Since $\sqrt{x^2+y^2}\ge y$,
\[
0<xK_1(x)\le\int_0^\infty e^{-y}\,dy=1.
\]
Therefore
\[
0<H_\alpha(z)\le\int_0^\infty e^{-2u}\,du=\frac12.
\]

We now differentiate with respect to the logarithmic variable $\log z$.  Recall that $d/d\log z=z\,d/dz$.  By definition,
\[
\frac d{d\log z}\bigl(\tau F_\alpha(z)\bigr)
=-\tau H_\alpha(z).
\]
Moreover, using $(xK_1(x))'=-xK_0(x)$ and again $z\partial_zx=x$,
\[
\frac d{d\log z}H_\alpha(z)
=
-\int_0^\infty x^2K_0(x)e^{-2u}\,du.
\]
This differentiation is also justified by the Dominated Convergence Theorem, since
$x^2K_0(x)$ is bounded on $(0,\infty)$.
Hence
\[
\frac d{d\log z}(\tau F_\alpha-H_\alpha)
=
-\tau H_\alpha+
\int_0^\infty x^2K_0(x)e^{-2u}\,du
\ge-\frac\tau2.
\]
Put
\[
G_\alpha(z):=\tau F_\alpha(z)-H_\alpha(z).
\]
For $0<r<R_0$, integrate the last inequality from $z=r/\alpha$ to $z_0=R_0/\alpha$.  Since $d\log z=dz/z$,
\[
G_\alpha(z_0)-G_\alpha(z)
\ge-\frac\tau2\log\frac{z_0}{z},
\]
or equivalently
\[
G_\alpha(z)
\le G_\alpha(z_0)+\frac\tau2\log\frac{z_0}{z}.
\]
The relation with $D_\alpha$ is exact.  Since
\[
F_\alpha(z)=\alpha\mathcal A_\alpha(\alpha z),
\qquad
H_\alpha(z)=-\alpha^2z\mathcal A_\alpha'(\alpha z),
\]
we have
\[
D_\alpha(\alpha z)
=\frac{q+2}{\alpha}G_\alpha(z).
\]
The hypothesis $D_\alpha(R_0)\le0$ therefore gives $G_\alpha(z_0)\le0$.  Consequently
\[
D_\alpha(r)
\le\frac{q+2}{\alpha}\frac\tau2
\log\frac{R_0}{r}
=\frac b\alpha\log\frac{R_0}{r}.
\]
The right-hand side is nonnegative on $(0,R_0)$, so taking positive parts yields \eqref{eq:compensation}.
\end{proof}

Applying Lemma~\ref{lem:compensation} with $R_0=3\alpha/4$ reduces the
positive contribution to an integral with only a logarithmic loss.  What
remains is to control $J_\alpha$ on this nonoscillatory interval; this is the
one place where we need a Gaussian improvement of the elementary origin
bound.

Poisson's representation \cite[\S3.3]{Watson} gives, for $\alpha>-1/2$,
\begin{equation}\label{eq:Poisson}
J_\alpha(r)
=
\frac{(r/2)^\alpha}{\sqrt\pi\,\Gamma(\alpha+1/2)}
\int_{-1}^1 e^{irt}(1-t^2)^{\alpha-1/2}\,dt.
\end{equation}
Taking absolute values and evaluating the mass of the weight by the beta integral,
\[
\int_{-1}^1(1-t^2)^{\alpha-1/2}\,dt
=B\left(\frac12,\alpha+\frac12\right)
=\frac{\sqrt\pi\,\Gamma(\alpha+1/2)}{\Gamma(\alpha+1)},
\]
we obtain the global origin-type bound
\begin{equation}\label{eq:Poisson-bound}
|J_\alpha(r)|\le C_\alpha r^\alpha,
\qquad
C_\alpha:=\frac1{2^\alpha\Gamma(\alpha+1)}.
\end{equation}
Equivalently, $J_\alpha(r)/(C_\alpha r^\alpha)$ is the average of $\cos(rt)$ against the probability measure on $[-1,1]$ whose density is proportional to $(1-t^2)^{\alpha-1/2}$.  We shall also use the classical lower Stirling bound \cite{Robbins}
\begin{equation}\label{eq:Stirling-lower}
\Gamma(\alpha+1)>
\sqrt{2\pi\alpha}\left(\frac\alpha e\right)^\alpha,
\qquad \alpha>0.
\end{equation}

The Poisson bound alone is not quite strong enough on this interval, so we use
the following Gaussian improvement before the first zero.

\begin{lemma}[Gaussian improvement before the first zero]\label{lem:Gaussian-before-zero}
Let $\alpha>0$ and suppose that $J_\alpha>0$ on $(0,R]$.  Then, for $0<r\le R$,
\begin{equation}\label{eq:Gaussian-Bessel}
J_\alpha(r)
\le
C_\alpha r^\alpha
\exp\left(-\frac{r^2}{4(\alpha+1)}\right),
\qquad
C_\alpha:=\frac1{2^\alpha\Gamma(\alpha+1)}.
\end{equation}
Consequently, whenever $0<\alpha z\le R$,
\begin{equation}\label{eq:Phi-alpha}
J_\alpha(\alpha z)
<(2\pi\alpha)^{-1/2}e^{\alpha\Phi_\alpha(z)},
\qquad
\Phi_\alpha(z):=1+\log(z/2)-\frac{\alpha z^2}{4(\alpha+1)}.
\end{equation}
\end{lemma}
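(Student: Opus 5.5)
Let $y(r):=J_\alpha(r)/(C_\alpha r^\alpha)$, so that $y(0)=1$, $y>0$ on $(0,R]$, and by the series \eqref{eq:Bessel-series} $y$ is an even entire function of $r$. The goal \eqref{eq:Gaussian-Bessel} is then $y(r)\le e^{-r^2/(4(\alpha+1))}$. We plan to prove it through the logarithmic derivative of $y$.

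First, we derive the equation for $y$. Since $J_\alpha$ satisfies \eqref{eq:Bessel-ODE}, the reduced function $y$ satisfies
\[
y''+\frac{2\alpha+1}{r}y'+y=0.
\]
Next, on $(0,R]$ we set $w:=-y'/y$. This gives the Riccati equation
\[
w'=1+w^2-\frac{2\alpha+1}{r}w,
\]
with $w(r)\sim r/(2(\alpha+1))$ as $r\to0^+$, which is read off from $y=1-\frac{r^2}{4(\alpha+1)}+O(r^4)$.

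The main step is the comparison $w(r)\ge r/(2(\alpha+1))$ on $(0,R]$. Integrating this from $0$ to $r$ gives $\log y(r)\le -r^2/(4(\alpha+1))$, which is \eqref{eq:Gaussian-Bessel}. To get the comparison, we put $v(r):=r/(2(\alpha+1))$ and compute
\[
v'-1-v^2+\frac{2\alpha+1}{r}v=\frac{1}{2(\alpha+1)}-1+\frac{2\alpha+1}{2(\alpha+1)}-v^2=-v^2\le0.
\]
So $v$ is a subsolution of the Riccati equation. For $g:=w-v$ we then have
\[
g'\ge (w+v)g-\frac{2\alpha+1}{r}g,
\]
a linear differential inequality. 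Multiplying by an integrating factor of the form $r^{2\alpha+1}\exp\!\bigl(-\int (w+v)\bigr)$ shows that $r^{2\alpha+1}e^{-\int(w+v)}g$ is nondecreasing. Its limit at $0^+$ is $0$, because $g=O(r^3)$. Hence $g\ge0$.

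The only delicate point is behaviour near $r=0$, where the coefficient is singular. We handle it with the expansion of $y$ to fourth order, and with the fact that $w$ is smooth and finite on $(0,R]$ because $y>0$ there; this is exactly where the hypothesis that $J_\alpha$ has no zero on $(0,R]$ enters. As an alternative route, the $r^4$ coefficient comparison $\frac{1}{32(\alpha+1)(\alpha+2)}$ versus $\frac{1}{32(\alpha+1)^2}$ gives the same sign locally, which is consistent.

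For \eqref{eq:Phi-alpha}, we substitute $r=\alpha z$ into \eqref{eq:Gaussian-Bessel} and use the Stirling bound \eqref{eq:Stirling-lower}:
\[
C_\alpha(\alpha z)^\alpha<\frac{(\alpha z/2)^\alpha}{\sqrt{2\pi\alpha}\,(\alpha/e)^\alpha}=(2\pi\alpha)^{-1/2}\exp\bigl(\alpha(1+\log(z/2))\bigr).
\]
Multiplying this by $\exp(-\alpha^2z^2/(4(\alpha+1)))=\exp(-\alpha\cdot\alpha z^2/(4(\alpha+1)))$ yields exactly $(2\pi\alpha)^{-1/2}e^{\alpha\Phi_\alpha(z)}$. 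The inequality is strict because Stirling's bound is strict.
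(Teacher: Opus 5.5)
Your proof is correct, but it takes a different route from the paper.

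The paper obtains \eqref{eq:Gaussian-Bessel} from the Weierstrass product $J_\alpha(r)=C_\alpha r^\alpha\prod_{n\ge1}(1-r^2/j_{\alpha,n}^2)$. Positivity on $(0,R]$ gives $R<j_{\alpha,1}$, so each factor lies in $(0,1]$. Then $1-x\le e^{-x}$ together with Rayleigh's sum $\sum_{n\ge1}j_{\alpha,n}^{-2}=1/(4(\alpha+1))$ finishes the argument. The Stirling step is identical to yours.

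You replace the product formula, the reality of the zeros and the Rayleigh sum by three ingredients: the reduced equation $y''+\frac{2\alpha+1}{r}y'+y=0$, the expansion of $y$ at the origin, and a Riccati comparison. In your argument the positivity hypothesis is used only to keep $w=-y'/y$ finite on $(0,R]$. Your boundary analysis at $r=0$ is sound. In fact $g'-\bigl(w+v-\frac{2\alpha+1}{r}\bigr)g=v^2$ holds exactly, so $\bigl(r^{2\alpha+1}e^{-\int(w+v)}g\bigr)'=r^{2\alpha+1}e^{-\int(w+v)}v^2>0$.

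The two proofs establish the same intermediate inequality. In product form $w=\sum_{n\ge1}2r/(j_{\alpha,n}^2-r^2)$. The paper's factorwise use of $1-x\le e^{-x}$ is, after differentiation, exactly the bound $w\ge\sum_{n\ge1}2r/j_{\alpha,n}^2=v$ that your subsolution argument produces.

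The paper's route is shorter and explains the constant $1/(4(\alpha+1))$ as a Rayleigh sum. Yours is self-contained and uses only the differential equation and the series at the origin, with no information about the zeros beyond the hypothesis.
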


Here and below, $j_{\rho,n}$ denotes the $n$th positive zero of
$J_\rho$, while $j'_{\rho,n}$ denotes the $n$th positive zero of
$J_\rho'$ when $\rho>0$.  For real order these positive zeros form infinite
discrete sequences (and are simple in the ranges used here); see
\cite[pp.~477--487]{Watson} and \cite[\S10.21(i)]{DLMF}.

The estimate \eqref{eq:Gaussian-Bessel} is the standard product--Rayleigh
bound.  Indeed, since $J_\alpha>0$ on $(0,R]$, we have
$R<j_{\alpha,1}$.  Hence, for $0<r\le R$, the Weierstrass product gives
\[
J_\alpha(r)
=
C_\alpha r^\alpha
\prod_{n\ge1}
\left(1-\frac{r^2}{j_{\alpha,n}^2}\right),
\]
and every factor in the product lies in $(0,1]$.  Using
$1-x\le e^{-x}$ for $0\le x<1$ and Rayleigh's first sum
\[
\sum_{n\ge1}\frac1{j_{\alpha,n}^2}
=
\frac1{4(\alpha+1)},
\]
we obtain
\[
J_\alpha(r)
\le
C_\alpha r^\alpha
\exp\left(-r^2\sum_{n\ge1}\frac1{j_{\alpha,n}^2}\right)
=
C_\alpha r^\alpha
\exp\left(-\frac{r^2}{4(\alpha+1)}\right),
\]
which is \eqref{eq:Gaussian-Bessel}; see
\cite[pp.~497--498 and \S15.51]{Watson} and
\cite[\S10.21(iii),(xiii)]{DLMF} for the product formula and Rayleigh sum.

For the scaled estimate, put $r=\alpha z$ in
\eqref{eq:Gaussian-Bessel}.  By the Stirling lower bound
\eqref{eq:Stirling-lower},
\[
\begin{aligned}
J_\alpha(\alpha z)
&\le
\frac{\alpha^\alpha z^\alpha}{2^\alpha\Gamma(\alpha+1)}
\exp\left(-\frac{\alpha^2z^2}{4(\alpha+1)}\right)\\
&<
(2\pi\alpha)^{-1/2}
\exp\left[
\alpha+\alpha\log(z/2)
-\frac{\alpha^2z^2}{4(\alpha+1)}
\right]\\
&=
(2\pi\alpha)^{-1/2}e^{\alpha\Phi_\alpha(z)},
\end{aligned}
\]
which is \eqref{eq:Phi-alpha}.  This proves
Lemma~\ref{lem:Gaussian-before-zero}.

The lemma is conditional only on positivity of $J_\alpha$.  In the
application below we take $\alpha\ge3/2$ and $0<r\le3\alpha/4$.  The classical
ordering of the first positive zeros gives
\[
\alpha\le j'_{\alpha,1}<j_{\alpha,1},
\qquad \alpha>0;
\]
see \cite[pp.~477--487]{Watson} or \cite[\S10.21(i)]{DLMF}.  Hence, for
$\alpha\ge3/2$,
\[
\frac{3\alpha}{4}<\alpha\le j'_{\alpha,1}<j_{\alpha,1}.
\]
Thus $J_\alpha$ has no zero on $(0,3\alpha/4]$.  On the other hand, the
leading term in the ascending series \eqref{eq:Bessel-series} gives
\[
J_\alpha(r)
=
\frac{(r/2)^\alpha}{\Gamma(\alpha+1)}\bigl(1+O(r^2)\bigr)>0
\]
for all sufficiently small $r>0$.  By continuity and the absence of zeros,
the sign cannot change before $3\alpha/4$, and therefore
\[
J_\alpha(r)>0,
\qquad
0<r\le\frac{3\alpha}{4},\quad \alpha\ge\frac32.
\]
Accordingly, Lemma~\ref{lem:Gaussian-before-zero} applies on exactly the
nonoscillatory interval needed for the ensuing estimate of $P_\alpha$.

We now combine the two estimates and finish the upper bound for $P_\alpha$.
With $R_0=\sigma\alpha$, the positive contribution introduced above is
\[
P_\alpha
=
\int_0^{\sigma\alpha}
r^{b-1}|J_\alpha(r)|^q[D_\alpha(r)]_+\,dr.
\]
The positivity of $J_\alpha$ on $0<r\le3\alpha/4$ was proved immediately after Lemma~\ref{lem:Gaussian-before-zero}, so the Gaussian estimate \eqref{eq:Phi-alpha} is available throughout this interval. For $0<z\le\sigma$,
\[
z\Phi_\alpha'(z)
=
1-\frac{\alpha z^2}{2(\alpha+1)}
\ge1-\frac{z^2}{2}
\ge\frac{23}{32}.
\]
Hence
\[
\Phi_\alpha(z)
\le\Phi_\alpha(\sigma)+\frac{23}{32}\log(z/\sigma).
\]
Since $\alpha/(\alpha+1)\ge3/5$,
\[
\Phi_\alpha(\sigma)
\le\frac{293}{320}+\log\frac38
<\log\frac{15}{16}.
\]
The last inequality is equivalent to $\log(5/2)>293/320$. Therefore
\begin{equation}\label{eq:post-bridge-J-upper}
J_\alpha(\alpha z)
<(2\pi\alpha)^{-1/2}
\left(\frac{15}{16}\right)^\alpha(z/\sigma)^{23\alpha/32},
\qquad 0<z\le\sigma.
\end{equation}
Lemma~\ref{lem:compensation} gives
\[
[D_\alpha(r)]_+
\le\frac b\alpha\log\frac{\sigma\alpha}{r},
\qquad 0<r<\sigma\alpha.
\]
With $r=\alpha z$, \eqref{eq:post-bridge-J-upper}, and
\[
\int_0^\sigma z^{m-1}\log(\sigma/z)\,dz
=\frac{\sigma^m}{m^2},
\qquad m>0,
\]
we obtain
\begin{equation}\label{eq:post-bridge-P}
P_\alpha
\le
\alpha^{b-1-q/2}(2\pi)^{-q/2}
\left(\frac{15}{16}\right)^{q\alpha}
\frac{b\sigma^b}{(b+\frac{23}{32} q\alpha)^2}.
\end{equation}

\medskip

Thus the entire positive side of \eqref{eq:PN-reduction} is under control.
We now turn to $N_\alpha$.  Here it is advantageous not to estimate the whole
negative tail: we deliberately retain only
\[
I_\alpha:=[2\alpha,4\alpha],
\]
where the kernel is uniformly negative and the Bessel function has already
entered its oscillatory regime.  The lower bound for the kernel comes from
Lemma~\ref{lem:kernel-comparison}; to recover a definite amount of Bessel mass
on this interval we need the exact Hankel amplitude and phase.  We introduce
those quantities only now.

We next use the Bessel function of the second kind.  For $\nu\notin\mathbb Z$ we take the standard definition \cite[Chapter~III]{Watson}
\begin{equation}\label{eq:Ynu-definition}
Y_\nu(z)=
\frac{J_\nu(z)\cos(\pi\nu)-J_{-\nu}(z)}{\sin(\pi\nu)},
\end{equation}
and for integer order $Y_n$ is defined by the corresponding limit in $\nu$.  From $J_\nu$ and $Y_\nu$ we define the exact Hankel amplitude and a continuous real phase by
\begin{equation}\label{eq:Hankel-amplitude}
B_\nu(r):=\sqrt{J_\nu(r)^2+Y_\nu(r)^2},
\qquad
J_\nu=B_\nu\cos\theta_\nu,
\qquad
Y_\nu=B_\nu\sin\theta_\nu.
\end{equation}
The Wronskian also gives the phase derivative without any asymptotic approximation.  Differentiating
\[
J_\nu=B_\nu\cos\theta_\nu,
\qquad
Y_\nu=B_\nu\sin\theta_\nu,
\]
and forming $J_\nu Y_\nu'-J_\nu'Y_\nu$, all terms containing $B_\nu'$ cancel and one obtains
\[
J_\nu Y_\nu'-J_\nu'Y_\nu=B_\nu^2\theta_\nu'.
\]
Since the classical Bessel Wronskian is $J_\nu Y_\nu'-J_\nu'Y_\nu=2/(\pi r)$ \cite[Chapter~III]{Watson},
\begin{equation}\label{eq:phase-derivative}
\theta_\nu'(r)
=\frac{2}{\pi rB_\nu(r)^2}>0.
\end{equation}
Thus the exact phase is strictly increasing on $(0,\infty)$, a fact that will later allow us to change variables from $r$ to $\theta_\nu(r)$ even across many oscillations.

Nicholson's formula \cite[Chapter~XIII]{Watson} reads
\begin{equation}\label{eq:Nicholson}
B_\nu(r)^2
=\frac8{\pi^2}\int_0^\infty
K_0(2r\sinh u)\cosh(2\nu u)\,du.
\end{equation}
We use throughout the standard positive representation \cite[Chapter~5]{Lebedev}
\[
K_0(z)=\int_0^\infty e^{-z\cosh t}\,dt,
\qquad z>0.
\]
It immediately shows that $K_0$ is positive and decreasing.  This positivity is the main reason Nicholson's formula is so effective in what follows: estimates for $B_\nu$ can be obtained by pointwise comparison inside a positive integral rather than from an oscillatory asymptotic expansion.

\begin{lemma}\label{lem:Hankel-bounds}
For $r>\nu\ge1/2$,
\begin{equation}\label{eq:B-upper}
B_\nu(r)^2\le \frac{2}{\pi\sqrt{r^2-\nu^2}}.
\end{equation}
Moreover, $rB_\nu(r)^2$ is nonincreasing and
\begin{equation}\label{eq:B-lower}
B_\nu(r)^2\ge\frac{2}{\pi r}.
\end{equation}
Consequently, if $q>2$,
\begin{equation}\label{eq:bnuq}
b_{\nu,q}(r):=\frac{\pi}{2}r^2B_\nu(r)^{q+2}
\end{equation}
is decreasing and tends to zero as $r\to\infty$.
\end{lemma}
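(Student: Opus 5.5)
Everything will be derived from Nicholson's formula \eqref{eq:Nicholson}, which writes $B_\nu(r)^2$ as an integral with a positive, decreasing kernel $K_0(2r\sinh u)$.

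\emph{Lower bound and monotonicity.} Put $y=r\sinh u$, so that
\[
rB_\nu(r)^2=\frac8{\pi^2}\int_0^\infty K_0(2y)\,\frac{\cosh(2\nu\operatorname{arsinh}(y/r))}{\sqrt{1+y^2/r^2}}\,dy.
\]
For fixed $y$, the integrand depends on $r$ only through $s=y/r$, via $g(s)=\cosh(2\nu\operatorname{arsinh}s)/\sqrt{1+s^2}$. With $s=\sinh v$ we get $g=\cosh(2\nu v)/\cosh v$, and this is nondecreasing in $v\ge0$ whenever $2\nu\ge1$: its logarithmic derivative is $2\nu\tanh(2\nu v)-\tanh v\ge0$, because $x\mapsto x\tanh(xv)$ is increasing and $2\nu\ge1$. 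Since $s$ decreases as $r$ increases, $rB_\nu(r)^2$ is nonincreasing in $r$. Also $g\ge g(0)=1$, so
\[
rB_\nu^2\ge\frac8{\pi^2}\int_0^\infty K_0(2y)\,dy=\frac8{\pi^2}\cdot\frac\pi4=\frac2\pi,
\]
using $\int_0^\infty K_0=\pi/2$. This gives \eqref{eq:B-lower}. As a check, the same computation with $g\to1$ recovers the limit $2/\pi$ as $r\to\infty$.

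\emph{Upper bound.} Use the crude inequality $\sinh u\ge u$ only inside $K_0$, where $K_0$ is decreasing: $K_0(2r\sinh u)\le K_0(2ru)$. Then
\[
B_\nu^2\le\frac8{\pi^2}\int_0^\infty K_0(2ru)\cosh(2\nu u)\,du.
\]
Inserting $K_0(z)=\int_0^\infty e^{-z\cosh t}\,dt$ and applying Tonelli, the inner integral is $\int_0^\infty e^{-2ru\cosh t}\cosh(2\nu u)\,du=\dfrac{2r\cosh t}{4r^2\cosh^2t-4\nu^2}$, which is valid since $r\cosh t\ge r>\nu$. Integrating in $t$ with $w=\sinh t$,
\[
\int_0^\infty\frac{r\,dw}{2(r^2-\nu^2+r^2w^2)}=\frac{\pi}{4\sqrt{r^2-\nu^2}}.
\]
Hence $B_\nu^2\le \dfrac8{\pi^2}\cdot\dfrac{\pi}{4\sqrt{r^2-\nu^2}}=\dfrac{2}{\pi\sqrt{r^2-\nu^2}}$, which is \eqref{eq:B-upper}. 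This step is a closed-form evaluation. The only point to watch is the convergence requirement $r>\nu$, which is exactly the hypothesis.

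\emph{Consequence for $b_{\nu,q}$.} Write
\[
b_{\nu,q}=\frac\pi2\,(rB_\nu^2)^{(q+2)/2}\,r^{2-(q+2)/2}=\frac\pi2\,(rB_\nu^2)^{(q+2)/2}\,r^{(2-q)/2}.
\]
The first factor is positive and nonincreasing, and for $q>2$ the second factor is strictly decreasing. So $b_{\nu,q}$ is strictly decreasing. It tends to zero because $rB_\nu^2\le \dfrac{2r}{\pi\sqrt{r^2-\nu^2}}$ stays bounded for large $r$ while $r^{(2-q)/2}\to0$.

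The main obstacle is the monotonicity step, where one must see that $\nu\ge1/2$ is exactly what makes $\cosh(2\nu v)/\cosh v$ nondecreasing; the other steps are direct computations.
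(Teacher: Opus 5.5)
Your proof is correct. It differs from the paper's mainly in being self-contained.

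The paper does not prove the two modulus facts itself. It cites Watson for the upper bound and for the strict decrease of $rB_\nu(r)^2$ when $\nu>1/2$. It then checks the endpoint $\nu=1/2$ separately through the explicit half-integer formulas, where $rB_{1/2}(r)^2\equiv 2/\pi$. The lower bound is obtained by combining monotonicity with the asymptotic limit $rB_\nu(r)^2\to 2/\pi$. The decrease of $b_{\nu,q}$ comes from logarithmic differentiation.

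You extract everything directly from Nicholson's formula:
\begin{itemize}
\item The substitution $y=r\sinh u$ puts all the $r$-dependence into $g(y/r)=\cosh(2\nu v)/\cosh v$. This gives monotonicity for every $\nu\ge1/2$ at once; the endpoint is simply $g\equiv1$.
\item The pointwise bound $g\ge g(0)=1$ yields $rB_\nu^2\ge 2/\pi$ directly, with no appeal to Hankel asymptotics or the limit at infinity.
\item Comparing $\sinh u\ge u$ inside the decreasing kernel, followed by the closed-form Tonelli evaluation, reproduces Watson's bound exactly.
\item Your factorization $b_{\nu,q}=\frac{\pi}{2}(rB_\nu^2)^{(q+2)/2}r^{(2-q)/2}$ is a derivative-free version of the paper's logarithmic-differentiation step.
\end{itemize}

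This is presumably close to how the cited classical results are proved; the paper itself remarks that Watson's argument rests on Nicholson's positive integral. What your route buys is transparency, uniform handling of $\nu=1/2$, and independence from the limit $rB_\nu^2\to2/\pi$. The paper's route buys brevity. Note also that your monotonicity and lower-bound arguments hold for all $r>0$; the hypothesis $r>\nu$ is needed only for convergence of the comparison integral in the upper bound.
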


These estimates are classical consequences of Nicholson's integral.  The
upper bound \eqref{eq:B-upper} is Watson's bound
\cite[p.~447, Eq.~(1)]{Watson}.  For $\nu>1/2$, the function
\[
r\bigl(J_\nu(r)^2+Y_\nu(r)^2\bigr)=rB_\nu(r)^2
\]
is strictly decreasing by \cite[\S13.74]{Watson}.  At the endpoint
$\nu=1/2$, the explicit half-integer formulas
\[
J_{1/2}(r)=\sqrt{\frac{2}{\pi r}}\sin r,
\qquad
Y_{1/2}(r)=-\sqrt{\frac{2}{\pi r}}\cos r
\]
(see also \cite[\S10.16(i)]{DLMF}) give
\[
rB_{1/2}(r)^2=\frac{2}{\pi}.
\]
Thus $rB_\nu(r)^2$ is nonincreasing for every $\nu\ge1/2$.  Together with
 $
\underset{r\rightarrow \infty}{\lim}rB_{\nu}\left( r \right) ^2=\frac{2}{\pi}
$, this gives
\eqref{eq:B-lower}.  Finally, since $rB_\nu(r)^2$ is nonincreasing,
\[
B_\nu(r)^2+r(B_\nu(r)^2)'\le0,
\]
and hence
\[
\frac{r(B_\nu(r)^2)'}{B_\nu(r)^2}\le-1.
\]
Therefore logarithmic differentiation of
\[
b_{\nu,q}(r)
=
\frac{\pi}{2}r^2\bigl(B_\nu(r)^2\bigr)^{(q+2)/2}
\]
gives
\[
\frac{rb_{\nu,q}'(r)}{b_{\nu,q}(r)}
=
2+\frac{q+2}{2}
\frac{r(B_\nu(r)^2)'}{B_\nu(r)^2}
\le
1-\frac q2<0.
\]
Thus $b_{\nu,q}$ is strictly decreasing for $q>2$.  Moreover,
$rB_\nu(r)^2\to2/\pi$ implies
$b_{\nu,q}(r)=O(r^{1-q/2})\to0$ as $r\to\infty$.
This proves Lemma~\ref{lem:Hankel-bounds}.

\begin{remark}
Watson's proof of the modulus bounds above is based on Nicholson's positive
integral.  This positive-kernel origin is useful in the circle argument, where
the exact phase must be controlled uniformly through many oscillations.
\end{remark}

With these amplitude and phase bounds available, we can extract the required
negative mass on $[2\alpha,4\alpha]$.
For $r\ge2\alpha$,
\[
\frac r{\chi_\alpha}
\ge
\frac{2\alpha}{\sqrt{\alpha^2+1/4}}
\ge\frac6{\sqrt{10}}
>\frac{17}{9}.
\]
The second estimate in \eqref{eq:ell-bounds} then gives
\[
L_\alpha(r)
\ge
1-\frac{2}{3(r/\chi_\alpha+1)}
>
\frac{10}{13}.
\]
Moreover Lemma~\ref{lem:kernel-comparison} gives
\[
\mathcal A_\alpha(r)
\ge\frac{\pi}{4(r+\chi_\alpha)}.
\]
On $[2\alpha,4\alpha]$, since
\[
\frac{\chi_\alpha}{\alpha}\le\frac{\sqrt{10}}{3}<\frac{16}{15},
\qquad
\pi>\frac{25}{8},
\]
we get
\[
\mathcal A_\alpha(r)
\ge\frac{\pi}{4(4\alpha+\chi_\alpha)}
>\frac3{20\alpha}.
\]
Since $\tau=2b/(q+2)\le 3/5$, we have
\[
\frac{10}{13}(q+2)-2b
\ge
\left(\frac{10}{13}-\frac35\right)(q+2)
=
\frac{11}{65}(q+2)>0.
\]
Therefore the lower bound for $\mathcal A_\alpha$ may also be used, and hence
\begin{equation}\label{eq:post-bridge-D-negative}
-D_\alpha(r)
>
\frac3{20\alpha}
\left[\frac{10}{13}(q+2)-2b\right],
\qquad 2\alpha\le r\le4\alpha.
\end{equation}

We now estimate the Bessel mass on this interval. Lemma~\ref{lem:Hankel-bounds} gives
\[
\frac2{\pi r}
\le B_\alpha(r)^2
\le\frac2{\pi\sqrt{r^2-\alpha^2}},
\qquad r\ge2\alpha.
\]
Together with \eqref{eq:phase-derivative},
\[
\frac{\sqrt3}{2}
\le\theta_\alpha'(r)\le1,
\qquad 2\alpha\le r\le4\alpha.
\]
Let
\[
T:=\theta_\alpha(4\alpha)-\theta_\alpha(2\alpha).
\]
Then $T\ge\sqrt3\,\alpha$. Since
\[
\int_I\cos^2u\,du\ge\frac{|I|}{2}-\frac12
\]
on every real interval $I$, the change of variables $u=\theta_\alpha(r)$ gives
\[
\begin{aligned}
\int_{2\alpha}^{4\alpha}J_\alpha(r)^2\,dr
&=
\int_{\theta_\alpha(2\alpha)}^{\theta_\alpha(4\alpha)}
\frac{B_\alpha(r)^2}{\theta_\alpha'(r)}\cos^2u\,du\\
&\ge
\frac1{2\pi\alpha}
\left(\frac T2-\frac12\right)\\
&\ge
\frac1{2\pi}
\left(\frac{\sqrt3}{2}-\frac1{2\alpha}\right)
>\frac1{12}.
\end{aligned}
\]
Here the last inequality uses $\alpha\ge3/2$, $\sqrt3>12/7$ and $\pi<22/7$. Hölder on the interval of length $2\alpha$ yields
\begin{equation}\label{eq:post-bridge-Lq-lower}
\int_{2\alpha}^{4\alpha}|J_\alpha(r)|^q\,dr
>
2\alpha^{1-q/2}24^{-q/2}.
\end{equation}
Define
\[
w(b):=
\begin{cases}
4^{b-1},&0<b\le1,\\
1,&b\ge1.
\end{cases}
\]
Since $r^{b-1}\ge w(b)\alpha^{b-1}$ on $[2\alpha,4\alpha]$, the negative contribution
\[
N_\alpha
=
\int_{2\alpha}^{4\alpha}
r^{b-1}|J_\alpha(r)|^q[-D_\alpha(r)]\,dr
\]
satisfies
\begin{equation}\label{eq:post-bridge-N}
N_\alpha
>
\frac3{10}
\left[\frac{10}{13}(q+2)-2b\right]
w(b)24^{-q/2}\alpha^{b-1-q/2}.
\end{equation}

\medskip

We have therefore reduced the comparison $P_\alpha<N_\alpha$ to a purely
algebraic estimate in the three parameters $q,b,\alpha$.  Combining
\eqref{eq:post-bridge-P} and \eqref{eq:post-bridge-N} gives the explicit
quotient below; it remains only to eliminate the order parameter and optimize
over the two possible ranges of $b$.  Namely,
\begin{equation}\label{eq:post-bridge-E}
\frac{P_\alpha}{N_\alpha}<E(q,b,\alpha),
\end{equation}
where
\[
E(q,b,\alpha)
=
\frac{10}{3}
\left(\frac{12}{\pi}\right)^{q/2}
\frac{\left(\frac{15}{16}\right)^{q\alpha}b(3/4)^b}
{w(b)\left[\frac{10}{13}(q+2)-2b\right]
(b+\frac{23}{32} q\alpha)^2}.
\]
For fixed $q,b$,
\[
\partial_\alpha\log E
=
q\log\frac{15}{16}
-\frac{23q}{16\left(b+\frac{23}{32}q\alpha\right)}
<0.
\]
Therefore the worst value occurs at the smallest allowed order
\begin{equation}\label{eq:alpha0-post-bridge}
\alpha_0=\frac{q-b}{q-2}.
\end{equation}

We now finish the algebra by separating the only two possible ranges of $b$.
If $b\ge1$, then $w(b)=1$. At $\alpha=\alpha_0$,
\[
\begin{aligned}
\partial_b\log E
&=
-\frac{q}{q-2}\log\frac{15}{16}
+\frac1b+\log\frac34
+\frac{2}{\frac{10}{13}(q+2)-2b}\\
&\quad+
\frac{2\left(\frac{23q}{32(q-2)}-1\right)}{b+\frac{23}{32}q\alpha_0}>0.
\end{aligned}
\]
Indeed $b\le3/2$, $\log(3/4)>-1/3$, and $\frac{23q}{32(q-2)}>1$ for $q<6$. Hence the maximum lies on the upper boundary:
\[
b=\frac q2\quad(2<q\le3),
\qquad
b=3-\frac q2\quad(3\le q\le4).
\]
On the first edge put $x:=q-2\in(0,1]$. Direct differentiation gives
\[
\frac d{dq}\log E
=
\frac12\log\frac9\pi
+\left(\frac12-\frac2{x^2}\right)\log\frac{15}{16}
+\frac{92}{x(55x+46)}
-\frac1{x+2}
+\frac3{14-3x}.
\]
The middle two terms are positive, the last two have sum at least $-2/7$, and
\[
\frac12\log\frac9\pi>\frac27,
\]
so the derivative is positive. On the second edge $\alpha_0=3/2$ and
\[
\frac d{dq}\log E
=
\log\frac{30\sqrt{15}}{32\sqrt\pi}
-\frac1{6-q}
-\frac{23}{23q-58}
-\frac{74}{37q+192}<0.
\]
The logarithmic term is $<1$, whereas the sum of the three positive fractions being subtracted is at least $313/255>1$. Hence the maximum for $b\ge1$ occurs at the common corner $(q,b,\alpha)=(3,3/2,3/2)$, where direct substitution together with $\pi>157/50$, $\sqrt\pi>7/4$, and $\sqrt{15}<39/10$ gives
\begin{equation}\label{eq:E-corner}
E<\frac{49}{50}.
\end{equation} The corner belongs only to the closure of the algebraic parameter region; no divergent moment is evaluated there.

It remains to treat $0<b\le1$.  In this range we use the elementary estimate
\begin{equation}\label{eq:small-b-algebra}
\frac{4b(3/16)^b}{(b+a)^2}
<
\frac1{(a+1/2)^2},
\qquad 0<b\le1,\quad a\ge2.
\end{equation}
For completeness, put $\lambda:=\log(16/3)>3/2$. If $b\ge1/2$, then
\[
4be^{-\lambda b}\le\frac4{e\lambda}<1.
\]
If $0<b<1/2$, then
\[
\left(\frac{a+1/2}{a+b}\right)^2\le e^{1/2-b},
\]
and the left-hand side of \eqref{eq:small-b-algebra}, after multiplying by the squared-denominator ratio, is at most
\[
4b e^{-\lambda b+1/2-b}
\le\frac4{\sqrt e(\lambda+1)}<1.
\]
The elementary bounds $e>8/3$, $\sqrt e>8/5$, and $e^{3/2}<16/3$ justify these comparisons and $\lambda>3/2$.

In the applications below, we use \eqref{eq:small-b-algebra} with
\[
a:=\frac{23}{32}q\alpha_0.
\]
We therefore verify once that this choice satisfies the required condition $a\ge2$ throughout the relevant parameter range.  For $2<q\le4$,
\[
a=\frac{23}{32}q\alpha_0
\ge\frac{23}{32}\left(q+1+\frac{2}{q-2}\right)
>5\cdot\frac{23}{32}>2,
\]
while for $4\le q<6$ one has $\alpha_0\ge3/2$, and hence
\[
a=\frac{23}{32}q\alpha_0
\ge6\cdot\frac{23}{32}>2.
\]

For $2<q\le4$, use $\alpha_0\ge(q-1)/(q-2)$ and \eqref{eq:small-b-algebra} in \eqref{eq:post-bridge-E}. This yields
\begin{equation}\label{eq:Vq-post-bridge}
E<V(q):=
\frac{130}{3}
\frac{(12/\pi)^{q/2}\left(\frac{15}{16}\right)^{q(q-1)/(q-2)}}
{(10q-6)\,\left[\frac{23q(q-1)}{32(q-2)}+\frac12\right]^2}.
\end{equation}
Put $x:=q-2$ and $s:=x+3+2/x$. With
\[
L=\frac12\log\frac{12}{\pi}>\frac23,
\]
one computes
\[
(\log V)'
=
L+\left(1-\frac2{x^2}\right)
\left[\log\frac{15}{16}-\frac{2}{s+16/23}\right]
-\frac{10}{10x+14}.
\]
For $0<x\le1$ this is $>2/3+1/16-5/7=5/336$. For $1\le x\le\sqrt2$ it is at least $2/3-5/12=1/4$. For $\sqrt2\le x\le2$, use
\[
1-\frac2{x^2}\le\frac12,\qquad
-\log\frac{15}{16}<\frac1{15},\qquad
\frac2{s+16/23}<\frac13,\qquad
\frac{10}{10x+14}<\frac5{14},
\]
to obtain $23/210>0$. Hence $V$ is increasing on $(2,4]$.

For $4\le q<6$, use $\alpha_0\ge3/2$ and $b\le3-q/2$.  Applying the same bound gives
\begin{equation}\label{eq:Uq-post-bridge}
E<U(q):=
\frac{130}{3}
\frac{(12/\pi)^{q/2}\left(\frac{15}{16}\right)^{3q/2}}
{(23q-58)(69q/64+1/2)^2}.
\end{equation}
Its logarithmic derivative is
\[
(\log U)'
=
\frac12\log\frac{12}{\pi}
+\frac32\log\frac{15}{16}
-\frac{23}{23q-58}
-\frac{69/32}{69q/64+1/2}<0.
\]
Indeed the first two terms sum to less than $93/160$, whereas the last two positive magnitudes sum to at least
\[
\frac{23}{80}+\frac{69}{223}>\frac{93}{160}.
\]
Thus both one-variable bounds attain their largest value at $q=4$; using $\pi>157/50$ gives
\begin{equation}\label{eq:VU-at-four}
V(4)=U(4)<\frac{11}{20}<\frac{49}{50}.
\end{equation}
Combining \eqref{eq:E-corner} and \eqref{eq:VU-at-four} proves
\[
\frac{P_\alpha}{N_\alpha}<E(q,b,\alpha)<\frac{49}{50}.
\]

Returning to \eqref{eq:PN-reduction}, we obtain the quantitative conclusion
\begin{equation}\label{eq:post-bridge-margin}
P_\alpha<\frac{49}{50}N_\alpha,
\qquad
\partial_\alpha M_\alpha(q,\beta)
< -\frac1{50}N_\alpha<0,
\qquad \alpha\ge\nu+1.
\end{equation}
Thus Proposition~\ref{prop:continuous-order-reduction} yields
\[
I_{\nu,1}(q)>I_{\nu,2}(q)>I_{\nu,3}(q)>\cdots.
\]

At this stage the continuous-order part of the proof is complete.  What
remains is exactly one comparison, $I_{\nu,0}(q)>I_{\nu,1}(q)$.  We now turn
to that first link.  Unlike the later comparisons, it is proved directly in
the original Bessel equation by a reflected flux argument.

For $q>2$ define the odd function
\begin{equation}\label{eq:scalar-flux-Phi}
\Phi_q(t)
:=(1+t^2)^{q/2}
\int_0^t
\frac{1-|s|^q}{(1+s^2)^{q/2+1}}\,ds,
\qquad
\mathcal S_q(t):=t\Phi_q'(t)-\Phi_q(t).
\end{equation}

The following elementary properties of this flux are the precise ones needed for the reflected comparison.

\begin{lemma}[Convex scalar flux]\label{lem:scalar-flux}
For every $q>2$,
\begin{equation}\label{eq:scalar-flux-ode}
(1+t^2)\Phi_q'(t)-qt\Phi_q(t)=1-|t|^q,
\qquad
\mathcal S_q(0)=0,
\end{equation}
and
\begin{equation}\label{eq:correction-increasing}
\mathcal S_q'(t)>0,
\qquad t>0.
\end{equation}
Moreover $\Phi_q(t)>0$ for $t>0$, and
\begin{equation}\label{eq:scalar-flux-growth}
|\Phi_q(t)|+|\mathcal S_q(t)|
\le C_q\bigl(|t|+|t|^{q-1}\bigr),
\qquad t\in\mathbb R.
\end{equation}
\end{lemma}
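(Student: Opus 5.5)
Everything follows from the explicit formula \eqref{eq:scalar-flux-Phi}, so the plan is direct computation. Write $g(s):=(1-|s|^q)(1+s^2)^{-q/2-1}$, so that $\Phi_q(t)=(1+t^2)^{q/2}G(t)$ with $G(t)=\int_0^tg$. Since $g$ is even, $G$ and hence $\Phi_q$ are odd, and $\mathcal S_q$ is odd as well. Differentiating gives
\[
\Phi_q'(t)=qt(1+t^2)^{q/2-1}G(t)+(1+t^2)^{q/2}g(t).
\]
Multiplying by $1+t^2$ and subtracting $qt\Phi_q(t)$ leaves $(1+t^2)^{q/2+1}g(t)=1-|t|^q$, which is \eqref{eq:scalar-flux-ode}. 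The value $\mathcal S_q(0)=0$ is immediate from $\Phi_q(0)=0$.

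For the monotonicity, note $\mathcal S_q'(t)=t\Phi_q''(t)$, so \eqref{eq:correction-increasing} is equivalent to strict convexity of $\Phi_q$ on $(0,\infty)$. Differentiating the ODE once gives
\[
(1+t^2)\Phi_q''=(q-2)t\Phi_q'+q\Phi_q-qt^{q-1}.
\]
Substituting $\Phi_q'=(qt\Phi_q+1-t^q)/(1+t^2)$ expresses $\Phi_q''$ through $\Phi_q$ alone. Collecting terms, the claim becomes a lower bound of the form
\[
\Phi_q(t)\bigl(q+(2q-2)t^2\bigr)>t^{q-1}\bigl(q+2t^2\bigr)-(q-2)t,
\]
after multiplying by $1+t^2$ and simplifying the precise coefficients.

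I would verify this lower bound via an auxiliary function. Dividing by the positive factor $(1+t^2)^{q/2}$, the inequality becomes $G(t)>h(t)$ for an explicit elementary $h$. At $t=0$ both sides vanish, so it suffices to compare derivatives, i.e.\ to show $g(t)>h'(t)$, or to show that $\psi:=G-h$ has $\psi(0)=0$ and $\psi'>0$. If the derivative comparison does not close in one step, I would instead use that $\psi$ satisfies a first-order linear ODE with a positive integrating factor, and read off its sign. Splitting into $t\le1$ and $t\ge1$ should help. On $t\le1$, $g>0$ and $\Phi_q\ge tg$-type bounds are crude but sufficient near $0$, where $\Phi_q''\sim(q-2)t\cdot$const$>0$ from the expansion $\Phi_q=t+\tfrac{q-2}{3}t^3+\dots$, hmm, to be checked. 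On $t\ge1$, $G(t)$ is decreasing towards its limit.

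\textbf{Main obstacle.} I expect this convexity inequality to be the genuine difficulty, since $G$ is a non-elementary integral. I would first test whether $\Phi_q''$ itself satisfies a linear first-order equation whose forcing term has a sign, namely by differentiating the ODE twice. That would give
\[
(1+t^2)\Phi_q'''-(q-4)t\Phi_q''-2(q-1)\Phi_q'\cdot(\ldots)=-q(q-1)t^{q-2},
\]
from which a maximum-principle argument starting at $\Phi_q''(0)=0$, $\Phi_q'''(0)>0$ might yield positivity.

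Positivity of $\Phi_q$ for $t>0$: on $(0,1]$, $g>0$ so $G>0$. For $t>1$, $G$ decreases, so it suffices to show $G(\infty)\ge0$. The substitution $s\mapsto1/s$ maps $\int_1^\infty(1-s^q)(1+s^2)^{-q/2-1}ds$ to $-\int_0^1(1-u^q)u^{0}(1+u^2)^{-q/2-1}\,du$, because $(1+s^2)^{-q/2-1}s^q\,ds$ becomes $(1+u^2)^{-q/2-1}u^{2}\,u^{-2}du$. Hence $G(\infty)=0$ exactly, and $G>0$ on $(0,\infty)$. Finally, for the growth bound \eqref{eq:scalar-flux-growth}: near $0$, $\Phi_q=O(t)$; for large $t$, $|G(t)|=|\int_t^\infty g|\lesssim t^{-1}$, so $\Phi_q=O(t^{q-1})$, and $\Phi_q'$ is controlled by the ODE, giving the same bound for $\mathcal S_q$.
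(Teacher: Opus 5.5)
Your derivation of \eqref{eq:scalar-flux-ode}, the oddness, the positivity of $\Phi_q$ on $(0,\infty)$ via $G(\infty)=0$, and the growth bound \eqref{eq:scalar-flux-growth} are all correct.

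The gap is \eqref{eq:correction-increasing}, which is the substance of the lemma. You correctly reduce it to $\Phi_q''>0$ on $(0,\infty)$, but then only list strategies. None is carried out, and the ones you sketch do not close as stated.

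First, your reduced inequality has the wrong coefficients. Eliminating $\Phi_q'$ gives
\[
(1+t^2)^2\Phi_q''(t)=\Phi_q(t)\bigl(q+q(q-1)t^2\bigr)-t^{q-1}\bigl(q+2(q-1)t^2\bigr)+(q-2)t .
\]
Also, the Taylor expansion is $\Phi_q=t+\frac{q-1}{3}t^3+\cdots$, not $\frac{q-2}{3}t^3$.

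Second, write the condition as $G>h$ and set $\psi=G-h$. Then $\psi$ vanishes at $0$ and at $\infty$, so it cannot be nondecreasing. Concretely, as $t\to\infty$ you have $g(t)\sim-t^{-2}$ while $h'(t)\sim-\frac2q t^{-2}$, so $\psi'<0$ for large $t$. A derivative comparison started at $t=0$ must therefore fail somewhere.

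The maximum-principle idea for $\Phi_q''$ runs into the same problem. Your third-order equation is $(1+t^2)\Phi_q'''-(q-4)t\Phi_q''-2(q-1)\Phi_q'=-q(q-1)t^{q-2}$. At a putative first zero $t_0$ of $\Phi_q''$, a contradiction requires $\Phi_q'(t_0)>\frac q2 t_0^{q-2}$. That is again a quantitative lower bound you have not supplied.

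The missing idea is to compare $\Phi_q$ with the two simple barriers $t$ and $t^{q-1}$, rather than with the exact threshold $h$, and to use $G(\infty)=0$ to pass between them.

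Your substitution $s\mapsto 1/s$, applied to $\int_t^\infty g$ instead of $\int_1^\infty g$, gives $G(t)=G(1/t)$. This is the reciprocity $\Phi_q(t)=t^q\Phi_q(1/t)$.

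On $(0,1]$ the function $\Phi_q-t$ satisfies
\[
(1+t^2)(\Phi_q-t)'-qt(\Phi_q-t)=(q-1)t^2-t^q>0 .
\]
The integrating factor $(1+t^2)^{-q/2}$ then yields $\Phi_q(t)>t$ there. This is your first-order ODE idea, applied to the right function. Reciprocity then gives $\Phi_q(t)>t^{q-1}$ for $t\ge1$.

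Insert these bounds into the display above:
\begin{itemize}
\item For $t\ge1$, positivity follows from $q(q-1)\ge 2(q-1)$.
\item For $0<t\le1$, it reduces to $2(q-1)+q(q-1)t^2-qt^{q-2}-2(q-1)t^q>0$. Since $t^{q-2}\le1$ and $t^q\le t^2$, the left side is at least $(q-2)\bigl(1+(q-1)t^2\bigr)>0$.
\end{itemize}
This is essentially the paper's route. The only difference is that the paper uses a longer case split for the final elementary inequality.
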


\begin{proof}
Differentiating \eqref{eq:scalar-flux-Phi} gives \eqref{eq:scalar-flux-ode}. The integral of the defining kernel over $(0,\infty)$ vanishes. Indeed, with $s=\tan\theta$,
\[
\int_0^\infty
\frac{1-s^q}{(1+s^2)^{q/2+1}}\,ds
=
\int_0^{\pi/2}
\bigl(\cos^q\theta-\sin^q\theta\bigr)\,d\theta
=0.
\]
Changing $s$ to $1/s$ in the tail of the integral therefore yields the reciprocity relation
\begin{equation}\label{eq:Phi-reciprocity}
\Phi_q(t)=t^q\Phi_q(1/t),
\qquad t>0.
\end{equation}
For $0<t\le1$, subtracting $t$ from the definition and simplifying with \eqref{eq:scalar-flux-ode} gives
\[
\Phi_q(t)-t
=(1+t^2)^{q/2}
\int_0^t
\frac{(q-1)s^2-s^q}{(1+s^2)^{q/2+1}}\,ds>0,
\]
because $s^q\le s^2$ on $(0,1]$ and $q>2$. Hence $\Phi_q(t)>t$ for $0<t\le1$, while \eqref{eq:Phi-reciprocity} gives $\Phi_q(t)>t^{q-1}$ for $t\ge1$.

We next prove the monotonicity of $\mathcal S_q$. Differentiating \eqref{eq:scalar-flux-ode} once more and eliminating $\Phi_q'$ with the same identity gives
\begin{equation}\label{eq:Phi-second-identity}
t(1+t^2)^2\Phi_q''(t)
=qt\Phi_q(t)\bigl[1+(q-1)t^2\bigr]
+(q-2)t^2-qt^q-2(q-1)t^{q+2}.
\end{equation}
If $t\ge1$, the lower bound $\Phi_q(t)>t^{q-1}$ makes the right-hand side strictly larger than
\[
(q-2)\bigl[t^2+(q-1)t^{q+2}\bigr]>0.
\]
For $0<t\le1$, divide \eqref{eq:Phi-second-identity} by $t^2$ and use $\Phi_q(t)>t$. It is enough to prove
\begin{equation}\label{eq:Phi-small-positive}
2(q-1)+q(q-1)t^2-qt^{q-2}-2(q-1)t^q>0.
\end{equation}
When $q\ge4$, put $x:=t^2\in(0,1]$ and use $t^{q-2}\le x$ and $t^q\le x^2$; the left-hand side is bounded below by
\[
2(q-1)+q(q-2)x-2(q-1)x^2,
\]
a concave quadratic with positive values at $x=0$ and $x=1$. When $2<q<4$, put $a:=(q-2)/2\in(0,1)$ and use
\[
x^a\le1-a+ax,
\qquad
x^{a+1}\le x.
\]
The lower bound becomes $2a(a+2)+2a^2x>0$. Thus $\Phi_q''(t)>0$ for every $t>0$, and
\[
\mathcal S_q'(t)=t\Phi_q''(t)>0.
\]
Finally, the defining integral gives
\[
\Phi_q(t)=t+O_q(t^3),
\qquad
\Phi_q'(t)=1+O_q(t^2)
\qquad(t\to0).
\]
Together with \eqref{eq:Phi-reciprocity} and its derivative, these estimates imply \eqref{eq:scalar-flux-growth}.
\end{proof}

Recall that, for every real $\nu\ge0$, $J_\nu$ has an infinite
discrete sequence of positive simple zeros.  We enumerate them increasingly as
\[
0<j_{\nu,1}<j_{\nu,2}<\cdots;
\]
see \cite[pp.~477--487]{Watson} and \cite[\S10.21(i)]{DLMF}.

\begin{theorem}[The base and first mode]\label{thm:base-bridge}
Let $\nu\ge0$, $q>2$, and put
\[
u(r):=r^{-\nu}J_\nu(r),
\qquad
v(r):=r^{-\nu}J_{\nu+1}(r).
\]
Let $C_n:=j_{\nu+1,n}$ for $n\ge1$ and $C_0:=0$. Then, for every $N\ge1$,
\begin{equation}\label{eq:finite-cutoff-bridge}
\int_0^{C_N}
r^{2\nu+1}\bigl(|u(r)|^q-|v(r)|^q\bigr)\,dr>0.
\end{equation}
Consequently, if
\[
q>\frac{4(\nu+1)}{2\nu+1},
\]
then
\begin{equation}\label{eq:base-bridge}
I_{\nu,0}(q)>I_{\nu,1}(q).
\end{equation}
\end{theorem}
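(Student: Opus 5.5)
The plan is to turn the integrand $r^{2\nu+1}(|u|^q-|v|^q)$ into an exact derivative plus a signed remainder, using the scalar flux of Lemma~\ref{lem:scalar-flux}, and then to show that the remainder has positive integral over each interval between consecutive zeros of $v$. The basic first-order system comes from the recurrences \eqref{eq:Bessel-recurrences}:
\[
u'=-v,\qquad (r^{2\nu+1}v)'=r^{2\nu+1}u,
\]
equivalently $v'=u-\frac{2\nu+1}{r}v$.

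\emph{Step 1 (flux identity).} Set $H(u,v):=|u|^q\Phi_q(v/u)$, extended continuously across the zeros of $u$. By \eqref{eq:scalar-flux-growth}, $|H|\lesssim |u|^{q-1}|v|+|u||v|^{q-1}$, and this extension is $C^1$. Write $t=v/u$. A direct computation with \eqref{eq:scalar-flux-ode} gives
\[
-vH_u+uH_v=|u|^q\bigl[(1+t^2)\Phi_q'-qt\Phi_q\bigr]=|u|^q-|v|^q,
\qquad
H-vH_v=-|u|^q\mathcal S_q(t).
\]
Combining this with the system yields
\[
\frac{d}{dr}\Bigl[r^{2\nu+1}H(u,v)\Bigr]
=r^{2\nu+1}\bigl(|u|^q-|v|^q\bigr)-(2\nu+1)r^{2\nu}|u|^q\mathcal S_q(v/u).
\]
The boundary term vanishes at $r=0$, since $u$ is bounded and $v=O(r)$, and it vanishes at $r=C_N$, since $v(C_N)=0$ and $\Phi_q(0)=0$. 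Hence
\[
\int_0^{C_N}r^{2\nu+1}\bigl(|u|^q-|v|^q\bigr)\,dr=(2\nu+1)\int_0^{C_N}r^{2\nu}|u|^q\mathcal S_q(v/u)\,dr .
\]
By \eqref{eq:correction-increasing}, $\mathcal S_q$ is odd with $\operatorname{sign}\mathcal S_q(t)=\operatorname{sign}t=\operatorname{sign}(uv)$. The case $\nu=0$ needs no separate treatment, because the factor $2\nu+1=1$ is still positive.

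\emph{Step 2 (localization between zeros).} On each interval $(C_{n-1},C_n)$ the function $v$ has a fixed sign. Since $u'=-v$, $u$ is strictly monotone there, and interlacing gives exactly one zero $z_n=j_{\nu,n}$ of $u$ inside the interval. Therefore $uv>0$ on $(C_{n-1},z_n)$ and $uv<0$ on $(z_n,C_n)$, and it suffices to prove, for each $n$,
\[
\int_{C_{n-1}}^{z_n}r^{2\nu}|u|^q\mathcal S_q(v/u)\,dr>\int_{z_n}^{C_n}r^{2\nu}|u|^q\bigl|\mathcal S_q(v/u)\bigr|\,dr .
\]

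\emph{Step 3 (reflected phase comparison).} Introduce polar variables $u=\rho\cos\varphi$, $v=\rho\sin\varphi$. The system gives
\[
\varphi'=1-\frac{2\nu+1}{2r}\sin2\varphi,
\qquad
\frac{d}{dr}\log\bigl(r^{2\nu+1}\rho^2\bigr)=\frac{2\nu+1}{r}\cos2\varphi .
\]
On $(C_{n-1},C_n)$ the phase sweeps a half-turn, from $\varphi\equiv0$ through $\pi/2$ to $\pi$ (mod $\pi$). The positive region is $\varphi\in(0,\pi/2)$ and the negative region is $\varphi\in(\pi/2,\pi)$. I would change variables $r\mapsto\varphi$ and compare the integrands at $\varphi$ and at the reflected angle $\pi-\varphi$. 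At these two angles $|\cos\varphi|^q|\mathcal S_q(\tan\varphi)|$ is identical, and $\cos2\varphi$ is also identical. The positive region has $\varphi'<1$, while the reflected point has $\varphi'>1$, so $1/\varphi'$ favours the positive side. The Wronskian-type quantity $r^{2\nu+1}\rho^2$ evolves by the same $\cos2\varphi$ law on both halves, so its drift should be symmetric under the reflection. The remaining factor is $r^{2\nu}\rho^q=(r^{2\nu+1}\rho^2)^{q/2}r^{2\nu-(2\nu+1)q/2}$, and its explicit power of $r$ is decreasing because $q>2$, which favours the earlier (positive) half. Term by term this gives a strict inequality, and hence \eqref{eq:finite-cutoff-bridge}.

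\emph{Step 4 (passing to $I_{\nu,0}>I_{\nu,1}$).} Finally, for $q>4(\nu+1)/(2\nu+1)$ both moments converge. Letting $N\to\infty$ in \eqref{eq:finite-cutoff-bridge} gives only $\ge$, so strictness needs one more observation. I would take it from the first block: the partial integrals are nondecreasing in $N$, and already $\int_0^{C_1}>0$, so the limit is at least that positive number, which gives \eqref{eq:base-bridge}.

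The main obstacle is Step 3. The reflection must be made rigorous when $r$ at $\varphi$ and at $\pi-\varphi$ differ, and one must check that the comparison of $r^{2\nu+1}\rho^2$ across the reflection really goes the right way. I would try to prove this by a Sturm/Wronskian comparison of the solution of the phase equation with its reflected counterpart, started from $z_n$ where $\varphi=\pi/2$.
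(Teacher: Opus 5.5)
\textbf{Step 3 has a genuine gap.} Your Steps 1, 2 and 4 coincide with the paper's argument: the same flux identity, the same cells $[C_{n-1},C_n]$, and the same use of the first cell to keep strictness in the limit. Step 3 is not merely unrigorous, though. The term-by-term comparison under the angular reflection $\varphi\mapsto\pi-\varphi$ is false.

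After changing variables, the factor $|\cos\varphi|^q|\mathcal S_q(\tan\varphi)|$ is common to both sides. You therefore have to compare $r^{2\nu}\rho^q/\varphi'$ at the point $r_-$ with phase $\psi$ and at the point $r_+$ with phase $\pi-\psi$.

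\emph{Left side.} In the first cell, as $\psi\to0^+$ we have $r_-\to0$ and $\rho(r_-)\to u(0)=2^{-\nu}/\Gamma(\nu+1)$. Since $v/u\sim r/(2\nu+2)$, also $\varphi'(r_-)\to1/(2\nu+2)$. So the left density is asymptotic to $2(\nu+1)u(0)^q r_-^{2\nu}$, which tends to $0$ when $\nu>0$.

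\emph{Right side.} Here $r_+\to j_{\nu+1,1}$ and $\varphi'(r_+)\to1$. The right density tends to $j_{\nu+1,1}^{2\nu}|u(j_{\nu+1,1})|^q>0$.

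So for every $\nu>0$, in particular for all $\nu\ge1/2$ used later, the pointwise inequality reverses near the start of the first cell. At best it could be rescued by an integrated argument you have not supplied.

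\textbf{Why the heuristic fails.} The faulty ingredient is your claim about $W=r^{2\nu+1}\rho^2$. Suppose the drift $\frac{d}{d\varphi}\log W=(2\nu+1)\cos2\varphi/(r\varphi')$ were symmetric about $\pi/2$. Since you move away from $\pi/2$ in opposite directions, this would give $W(\psi)W(\pi-\psi)=W(\pi/2)^2$, not $W(\psi)=W(\pi-\psi)$. In fact $W\to0$ at the origin. Even in later cells $W$ goes the wrong way near the cell ends: for $\nu=1/2$ one has $W(j_{3/2,k})=\frac{2}{\pi}\,j_{3/2,k}^2/(1+j_{3/2,k}^2)$, which increases with $k$.

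\textbf{A further gap.} The change of variables $r\mapsto\varphi$ needs $\varphi'>0$. You have not shown this, and it is not evident where $\sin2\varphi>0$ and $r<\nu+\frac12$.

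\textbf{The missing idea.} You need a pairing that leaves the degenerate left end of each cell unmatched. The paper reflects in the radius about the zero $R=j_{\nu,n}$ of $J_\nu$, pairing $R-s$ with $R+s$. Set $Y_\mp(s)=|J_\nu(R\mp s)|$ and $Q(r)=1-\nu^2/r^2$. The weighted reflected Wronskian $W_R(s)=(R-s)(Y_-'Y_+-Y_-Y_+')$ satisfies
\[
W_R'(s)=\frac{2R}{R+s}Y_-Y_+'+(R-s)\bigl[Q(R+s)-Q(R-s)\bigr]Y_-Y_+>0 .
\]
This holds because $Q$ is nondecreasing and $Y_+'/Y_+>\nu/(R+s)\ge0$ on the negative half. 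Since $W_R(0)=0$, this yields:
\begin{enumerate}
\item $Y_->Y_+$ and $Y_-'/Y_->Y_+'/Y_+$;
\item hence $h(R-s)>-h(R+s)>0$ for $h=v/u$;
\item comparing log-slopes at $s=R-C_{n-1}$ shows that the negative half $[R,C_n]$ is no longer than the positive half $[C_{n-1},R]$.
\end{enumerate}
The density $r^{2\nu}|u|^q=r^{\nu(2-q)}|J_\nu|^q$ is a nonincreasing power times the larger amplitude. Together with the monotonicity of $\mathcal S_q$, this gives pointwise domination on the matched region. The unmatched piece $[C_{n-1},2R-C_n]$, which contains the origin in the first cell, contributes nonnegatively.

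Your closing suggestion of a Wronskian comparison started at $z_n$ is the right instinct, but it has to be run in $r$, not in $\varphi$.
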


\begin{proof}
The adjacent-order recurrences \eqref{eq:Bessel-recurrences} give
\begin{equation}\label{eq:uv-system}
u'=-v,
\qquad
v'=u-\frac{2\nu+1}{r}v.
\end{equation}
Away from the zeros of $u$, set $h:=v/u$. Combining \eqref{eq:uv-system} with \eqref{eq:scalar-flux-ode} yields
\begin{equation}\label{eq:bridge-flux-identity}
\left[r^{2\nu+1}|u|^q\Phi_q(h)\right]'
=
r^{2\nu+1}\bigl(|u|^q-|v|^q\bigr)
-(2\nu+1)r^{2\nu}|u|^q\mathcal S_q(h).
\end{equation}

We first justify this identity across every positive zero. Define, for real $a,b$,
\[
\mathfrak F_q(a,b)
:=
\begin{cases}
|a|^q\Phi_q(b/a),&a\ne0,\\
0,&a=0.
\end{cases}
\]
By oddness and \eqref{eq:Phi-reciprocity},
\[
\mathfrak F_q(a,b)=|b|^q\Phi_q(a/b),
\qquad b\ne0.
\]
Near a point $(0,b)$ with $b\ne0$,
\[
\mathfrak F_q(a,b)
=ab|b|^{q-2}+O_q\bigl(|a|^3|b|^{q-3}\bigr).
\]
Thus $\mathfrak F_q$ extends as a $C^1$ function through either axis away from the origin. Its homogeneity of degree $q>2$ extends it to a $C^1$ function on all of $\mathbb R^2$, with derivative zero at the origin. Likewise \eqref{eq:scalar-flux-growth} shows that $|a|^q\mathcal S_q(b/a)$ extends continuously by zero when $a=0$. Hence \eqref{eq:bridge-flux-identity} is an ordinary $C^1$ identity on every compact positive radial interval, not merely an identity away from zeros. At $r=0$, the defining series gives $h(r)=O(r)$ and the weighted flux is $O(r^{2\nu+2})$, so its endpoint value is zero.

It remains to prove positivity of the correction integral on each cell $[C_{n-1},C_n]$. Let
\[
R:=j_{\nu,n},
\qquad
a:=R-C_{n-1},
\qquad
b:=C_n-R.
\]
The standard interlacing of the positive zeros of $J_\nu$ and $J_{\nu+1}$, see for example \cite[Chapter~XV]{Watson}, gives
\[
C_{n-1}<R<C_n.
\]
On the left of $R$ the quotient $h$ is positive, and on the right it is negative. Put
\[
Y_-(s):=|J_\nu(R-s)|,
\qquad
Y_+(s):=|J_\nu(R+s)|.
\]
These functions are positive on their open half-cells, vanish at $s=0$, and have the same positive initial slope. If
\[
Q(r):=1-\frac{\nu^2}{r^2},
\]
then the original Bessel equation gives
\begin{equation}\label{eq:reflected-equations}
Y_-''=
\frac{Y_-'}{R-s}-Q(R-s)Y_-,
\qquad
Y_+''=
-\frac{Y_+'}{R+s}-Q(R+s)Y_+.
\end{equation}
On the right half-cell, $h(R+s)<0$. Using
\[
\frac{J_\nu'(r)}{J_\nu(r)}
=
\frac\nu r-\frac{J_{\nu+1}(r)}{J_\nu(r)},
\]
we obtain
\begin{equation}\label{eq:right-log-slope}
\frac{Y_+'(s)}{Y_+(s)}>
\frac\nu{R+s}\ge0,
\qquad 0<s<b.
\end{equation}

For $0<s<\min(a,b)$ define the weighted reflected Wronskian
\begin{equation}\label{eq:reflected-Wronskian}
W_R(s)
:=(R-s)\bigl(Y_-'(s)Y_+(s)-Y_-(s)Y_+'(s)\bigr).
\end{equation}
Differentiating and using \eqref{eq:reflected-equations} gives the exact identity
\begin{align}
W_R'(s)
&=
\frac{2R}{R+s}Y_-(s)Y_+'(s)\notag\\
&\quad +(R-s)\bigl[Q(R+s)-Q(R-s)\bigr]Y_-(s)Y_+(s)>0.
\label{eq:reflected-Wronskian-positive}
\end{align}
Indeed, $Q$ is nondecreasing for every $\nu\ge0$, while the first term is strictly positive by \eqref{eq:right-log-slope}; this remains true when $\nu=0$. Since $W_R(0)=0$,
\[
\left(\frac{Y_-}{Y_+}\right)'
=
\frac{W_R}{(R-s)Y_+^2}>0.
\]
The ratio tends to one as $s\to0^+$ because the initial slopes agree. Therefore
\begin{equation}\label{eq:reflected-amplitude-log-slope}
Y_-(s)>Y_+(s),
\qquad
p_-(s):=\frac{Y_-'(s)}{Y_-(s)}>
\frac{Y_+'(s)}{Y_+(s)}=:p_+(s).
\end{equation}

We also have $b\le a$. If $a<b$ and $C_{n-1}>0$, then $J_{\nu+1}(C_{n-1})=0$ gives
\[
p_-(a)=-\frac\nu{C_{n-1}}\le0,
\]
whereas \eqref{eq:right-log-slope} gives $p_+(a)>0$, a contradiction. When $C_{n-1}=0$, the defining series instead gives $p_-(s)\to-\infty$ as $s\to a^-$ if $\nu>0$, and $p_-(s)\to0$ if $\nu=0$; the same contradiction follows.

For $0<s<b$ set
\[
h_-(s):=h(R-s),
\qquad
h_+(s):=-h(R+s).
\]
The recurrence gives
\[
h_-=p_-+\frac\nu{R-s},
\qquad
h_+=p_+-\frac\nu{R+s},
\]
so
\begin{equation}\label{eq:h-reflected-comparison}
h_-(s)>h_+(s)>0.
\end{equation}
Furthermore, the weighted quantities appearing in the moment comparison satisfy
\begin{equation}\label{eq:3.61}
r^{2\nu+1}|u(r)|^q
=
r^{1-\nu(q-2)}|J_\nu(r)|^q .
\end{equation}
The factor $r$ in this identity is the radial Jacobian and comes directly from
the radial measure.  It is important to distinguish this integrated weight
from the pointwise density used in the reflected comparison.  The latter is
\[
r^{2\nu}|u(r)|^q
=
r^{\nu(2-q)}|J_\nu(r)|^q ,
\]
and it is this density, rather than the full radial density in
\eqref{eq:3.61}, that enters the pointwise comparison of the two reflected
halves.  The required monotonicity of this density follows from the Bessel
identities and the reflected Wronskian argument established above; it is not a
consequence of the weaker statement $q>2$ alone.  Thus no monotonicity claim
for the exponent $1-\nu(q-2)$ in \eqref{eq:3.61} is needed.

Moreover, because $h_-(s)>h_+(s)>0$ and
$\mathcal S_q$ is strictly increasing on $(0,\infty)$, we have
\[
\mathcal S_q(h_-(s))>\mathcal S_q(h_+(s)).
\]
Thus $Y_->Y_+$, \eqref{eq:h-reflected-comparison}, and Lemma~\ref{lem:scalar-flux} imply
\[
(R-s)^{2\nu}|u(R-s)|^q\mathcal S_q(h_-(s))
>
(R+s)^{2\nu}|u(R+s)|^q\mathcal S_q(h_+(s)).
\]
Because $\mathcal S_q$ is odd, the positive correction on the reflected left half dominates the magnitude of the negative correction on the right. The unused left interval, when $b<a$, is nonnegative. Hence
\begin{equation}\label{eq:positive-cell-correction}
\int_{C_{n-1}}^{C_n}
r^{2\nu}|u(r)|^q\mathcal S_q(h(r))\,dr>0.
\end{equation}

At $C_{n-1}$ and $C_n$, the function $v$ vanishes, hence $h=0$ and the boundary flux in \eqref{eq:bridge-flux-identity} is zero. Integrating over one cell and using \eqref{eq:positive-cell-correction} gives
\[
\int_{C_{n-1}}^{C_n}
r^{2\nu+1}\bigl(|u|^q-|v|^q\bigr)\,dr>0.
\]
Summing proves \eqref{eq:finite-cutoff-bridge}.

Finally,
\[
r^{2\nu+1}|u(r)|^q
=r^{2\nu+1-\nu q}|J_\nu(r)|^q,
\qquad
r^{2\nu+1}|v(r)|^q
=r^{2\nu+1-\nu q}|J_{\nu+1}(r)|^q.
\]
In the admissible range both infinite integrals converge: the origin follows from the defining series, while the fixed-order estimate $J_\gamma(r)=O(r^{-1/2})$ gives integrability at infinity. The cutoff differences are sums of positive cell contributions and are already bounded below by the first one. Letting $N\to\infty$ therefore proves \eqref{eq:base-bridge}.
\end{proof}

The admissible exponent satisfies the hypothesis of
Theorem~\ref{thm:base-bridge}, so
\[
I_{\nu,0}(q)>I_{\nu,1}(q).
\]
Combining this bridge with the continuous-order chain obtained above proves
\[
I_{\nu,0}(q)>I_{\nu,1}(q)>I_{\nu,2}(q)>\cdots,
\]
and completes the proof of Theorem~\ref{thm:main-extension} for $d\ge3$.
The organization of the proof is worth emphasizing: the derivative formula
identifies the easy range $\beta\le-1$ immediately; when $\beta>-1$, the
comparison is reduced to $P_\alpha<N_\alpha$ and the auxiliary estimates are
introduced only as they are needed to control these two terms; the reflected
argument is used only for the single first link that the post-first-mode
continuous estimate does not cover.

\subsection{The case $d=2$}\label{sec:circle}

We now set $\nu:=0$.  The admissible range is $q>4$, and
\[
I_k(q):=\int_0^\infty |J_k(r)|^q r\,dr.
\]
Our goal is
\begin{equation}\label{eq:circle-goal}
I_k(q)<I_0(q),\qquad k\ge1,\quad q>4.
\end{equation}

The large-exponent part of \eqref{eq:circle-goal} is already known.  Building
on the sharp sixth-moment hierarchy of Ciccone and Gon\c{c}alves \cite{CG},
Ciccone and Sousa proved that, in dimension two,
\begin{equation}\label{eq:circle-known-large-q}
I_k(q)<I_0(q),
\qquad k\ge1,\quad q\ge6;
\end{equation}
see \cite[Theorem~1 and \S3.1]{CS}.  Thus it remains only to treat
\[
4<q<6.
\]
In this interval the continuous-order argument from the higher-dimensional
case cannot be applied directly.  The reason is structural: at the critical
exponent $q=4$ each modal moment has the same logarithmically divergent
Hankel tail, so that comparing two infinite quantities before cancellation
would lose the main information.  We therefore keep the common asymptotic
part explicit and compare only the finite remainder.  The first few modes are
handled by an adjacent-mode identity, which avoids any tail decomposition.
For higher modes we split the integral into a nonoscillatory region, a
transition region, and an exact-phase tail; in the last region the leading
Hankel terms cancel and the remaining phase difference gives the required
strict sign.  This separation is the key point in the circle argument.

\subsubsection{The first three positive modes}
Fix $j\in\{1,2,3\}$ and write $s:=q/2\in(2,3]$.  Put
\[
x:=J_{j-1}(r)^2,\qquad y:=J_j(r)^2,\qquad W:=x+y.
\]
The Bessel recurrences give
\[
W'=\frac{2(j-1)}r x-\frac{2j}{r}y,
\]
and hence
\begin{equation}\label{eq:circle-adjacent-identity}
\left(\frac{r^2W^s}{2}\right)'
=rW^{s-1}
\bigl([1+s(j-1)]x-(sj-1)y\bigr).
\end{equation}
Thus the integral comparison will follow if the nonlinear difference
$x^s-y^s$ dominates the bracket on the right.  This is exactly the elementary
scalar inequality we need at this point.

\begin{lemma}\label{lem:circle-scalar}
Let $2\le s\le3$, $j\in\{1,2,3\}$ and
\begin{equation}\label{eq:lambda-js}
\lambda_j(s):=\frac{sj-1}{1+s(j-1)}.
\end{equation}
Then, for $x,y\ge0$,
\begin{equation}\label{eq:circle-scalar}
x^s-y^s
\ge
\frac1{\lambda_j(s)}
\bigl(x-\lambda_j(s)y\bigr)(x+y)^{s-1}.
\end{equation}
If $s>2$, the inequality is strict for $x>0$ and $y/x$ sufficiently small.
\end{lemma}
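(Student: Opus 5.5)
The inequality \eqref{eq:circle-scalar} is homogeneous of degree $s$ in $(x,y)$, so I will first dispose of the trivial cases and then reduce it to a one-variable statement on $[0,1]$. If $x+y=0$ both sides vanish. If $x=0<y$, the left side is $-y^s$ and the right side is $\lambda_j(s)^{-1}(-\lambda_j(s)y)y^{s-1}=-y^s$, so equality holds. Otherwise I will set
\[
p:=\frac{x}{x+y}\in(0,1],
\]
divide by $(x+y)^s$, and write $\lambda=\lambda_j(s)$. The inequality then becomes $g(p)\ge0$ on $[0,1]$, where
\[
g(p):=f(p)-\Bigl(\frac{1+\lambda}{\lambda}\,p-1\Bigr),
\qquad
f(p):=p^s-(1-p)^s .
\]

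Next I will collect the elementary facts about $\lambda$ and $f$. A direct check for $j=1,2,3$ gives $\lambda_j(s)\ge1$ and $s\ge1+1/\lambda_j(s)$ for every $s\ge2$, with equality in both exactly at $s=2$. For instance, when $j=2$ one has $(2s-1)/(1+s)\ge1$ and $3s/(2s-1)\le s$, both equivalent to $s\ge2$. The function $f$ is odd about $p=1/2$, is concave on $[0,1/2]$ and convex on $[1/2,1]$, and satisfies $f'(0)=f'(1)=s$. Consequently:
\begin{itemize}
\item $g(0)=0$ and $g(1)=1-1/\lambda\ge0$;
\item $g(1/2)=(\lambda-1)/(2\lambda)\ge0$;
\item $g'(0)=g'(1)=s-(1+\lambda)/\lambda\ge0$.
\end{itemize}
On $[0,1/2]$, $g$ is concave, so it lies above the chord joining $g(0)=0$ and $g(1/2)\ge0$; this settles that half. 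The strictness claim for $s>2$ follows from $g(1)=1-1/\lambda>0$ together with continuity near $p=1$, which corresponds to $y/x$ small.

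The remaining region $p\in[1/2,1]$, i.e. $0\le t:=y/x\le1$, is where I expect the real work. There $g$ is convex and nonnegative at both endpoints, and $g'(1)\ge0$. Convexity alone does not exclude an interior dip, and the tangent at $p=1/2$ is not enough: for $j=2$ and $s=3$ the slope $g'(1/2)$ is negative.

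My plan for this region is to work in the variable $t\in[0,1]$ with the equivalent form
\[
h(t):=\lambda(1-t^s)-(1-\lambda t)(1+t)^{s-1}\ge0 .
\]
I first note that $s=2$ is an identity, because then $\lambda=1$ for all $j$. For $s\in(2,3]$ I will try two routes.
\begin{itemize}
\item The first route is to treat $\lambda$ as the variable. Since $h$ is increasing in $\lambda$ (its $\lambda$-derivative is $1-t^s+t(1+t)^{s-1}>0$), it suffices to use the smallest admissible $\lambda$. One then shows $h\ge0$ by differentiating twice in $t$ and using $h(0)=\lambda-1\ge0$, $h(1)=2^{s-1}(\lambda-1)\ge0$, and the sign of $h'$ at the endpoints.
\item Failing that, I would exploit that only three values of $j$ occur and that $s$ ranges over the compact interval $[2,3]$. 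For each $j$ one can bound $(1+t)^{s-1}$ between $1+(s-1)t$ and $(1+t)^2$-type expressions, and reduce the question to polynomial inequalities in $t$.
\end{itemize}
The main obstacle is this convex half, near the interior minimum of $g$ for $j=2,3$ at $s$ close to $3$, where the margin appears thinnest.
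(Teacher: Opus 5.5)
\textbf{There is a genuine gap: the region $x\ge y$ is left unproved.} The parts you do prove are correct and agree with the paper:
\begin{itemize}
\item the case $x\le y$ (your concave half $p\le\tfrac12$ is the paper's chord bound $F_s(u)\ge u$ for $u\le0$);
\item the strictness claim, via $g(1)=1-1/\lambda>0$;
\item the monotonicity of $h$ in $\lambda$, which reduces everything to $\lambda=\lambda_3(s)$, the smallest of the three coefficients at fixed $s$.
\end{itemize}
But $x\ge y$ carries the entire content of the lemma, and neither of your routes closes it as stated.

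The inequality is tight in the interior of this region. For $j=3$, $s=3$ one has $\lambda=8/7$ and $h(t)=\tfrac17(1-3t)^2$. For every $s\in(2,3]$ one has $h'(0)=\lambda_3(s)-(s-1)=\tfrac{2s(2-s)}{2s+1}<0$, so $h$ has an interior minimum. Endpoint values and endpoint signs of $h'$ cannot detect that minimum, so Route~1 cannot work as described. Route~2 fails for a different reason. The inequality is an identity at $s=2$, so the margin is $O(s-2)$, and any bound that is not exact at $s=2$ loses too much. For example, replacing $(1+t)^{s-1}$ by $(1+t)^2$ gives a lower bound for $h(\tfrac12)$ that tends to $\tfrac34-\tfrac98<0$ as $s\to2^+$.

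\textbf{The missing idea is to interpolate in the exponent} between the identity at $s=2$ and the polynomial case $s=3$. The paper does this globally. With $x+y=1$, $u=x-y$, $a=\tfrac{1+u}{2}$ and $b=\tfrac{1-u}{2}$, the $j=3$ inequality becomes
\[
G(s,u)=2(3s-1)(a^s-b^s)-5su+s-2\ge0 .
\]
Here $G(2,u)\equiv0$ and $G(3,u)=(2u-1)^2(u+1)$. The paper then asserts that $s\mapsto G(s,u)$ is concave on $[2,3]$, so $G(s,u)\ge(s-2)G(3,u)\ge0$.

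Your Route~2 can be repaired along the same lines, and it then becomes fully elementary:
\begin{itemize}
\item For $1/\lambda\le t\le1$, both terms of $h$ are nonnegative, so only $0\le t\le1/\lambda$ matters.
\item There $1-\lambda t\ge0$. By convexity in $s$, use the chord bounds $t^s\le(3-s)t^2+(s-2)t^3$ and $(1+t)^{s-1}\le(3-s)(1+t)+(s-2)(1+t)^2$, which are exact at both $s=2$ and $s=3$.
\item With $\lambda=\lambda_3(s)$ these give
\[
h(t)\ge(3-s)(\lambda-1)(1+t)+(s-2)\bigl[(\lambda-1)+(\lambda-2)t+(2\lambda-1)t^2\bigr]
=\frac{s-2}{2s+1}\bigl(1-2st+(4s-3)t^2\bigr)\ge0 .
\]
The last inequality holds because the quadratic has discriminant $4(s-1)(s-3)\le0$.
\end{itemize}
This version also avoids the sign check $\partial_s^2G\le0$, which the paper states without detailed verification.
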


\begin{proof}
By homogeneity we may assume $x+y=1$.  Put $u:=x-y$ and
\[
F_s(u)=\left(\frac{1+u}{2}\right)^s-
\left(\frac{1-u}{2}\right)^s.
\]
For $u\ge0$, convexity gives $F_s(u)\le u$, while oddness gives the reverse inequality for $u\le0$.  It is therefore enough to check the case $u\in[0,1]$ with the smallest coefficient $\lambda_3(s)$.  After clearing denominators this becomes
\[
G(s,u)
=2(3s-1)F_s(u)-5su+s-2\ge0.
\]
At the endpoints,
\[
G(2,u)=0,
\qquad
G(3,u)=4u^3-3u+1=(2u-1)^2(u+1)\ge0.
\]
To make the concavity step explicit, put
\[
a:=\frac{1+u}{2},\qquad b:=\frac{1-u}{2}.
\]
A direct differentiation gives
\[
\partial_s^2 G(s,u)
=
12\bigl[a^s\log a-b^s\log b\bigr]
+2(3s-1)\bigl[a^s(\log a)^2-b^s(\log b)^2\bigr].
\]
For $2\le s\le3$ and $0\le u\le1$, the right-hand side is nonpositive
(with the endpoint $b=0$ understood by continuity).  Hence
$s\mapsto G(s,u)$ is concave on $[2,3]$, and therefore
\[
G(s,u)\ge (3-s)G(2,u)+(s-2)G(3,u)\ge0.
\]
The strictness follows from the value at $y=0$ and continuity.
\end{proof}

\begin{proposition}\label{prop:circle-low-modes}
For $4<q\le6$,
\begin{equation}\label{eq:circle-low-chain}
I_0(q)>I_1(q)>I_2(q)>I_3(q).
\end{equation}
\end{proposition}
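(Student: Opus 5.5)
The plan is to prove each link $I_{j-1}(q)>I_j(q)$, $j\in\{1,2,3\}$, separately, by integrating the adjacent-mode identity \eqref{eq:circle-adjacent-identity} against the pointwise bound of Lemma~\ref{lem:circle-scalar}. Throughout, fix $j$ and put $s=q/2\in(2,3]$, $x=J_{j-1}(r)^2$, $y=J_j(r)^2$ and $W=x+y$. Then $I_{j-1}(q)-I_j(q)=\int_0^\infty r\,(x^s-y^s)\,dr$, and both integrals converge absolutely for $q>4$. Indeed, $|J_k|\le1$ handles the origin, and $J_k(r)=O(r^{-1/2})$ gives an integrand of size $O(r^{1-s})$ at infinity with $s>2$.

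The first step is purely algebraic. From \eqref{eq:lambda-js} we have
\[
[1+s(j-1)]x-(sj-1)y=(1+s(j-1))\,(x-\lambda_j(s)y),
\]
and $\lambda_j(s)>0$ because $sj-1>0$. Multiplying \eqref{eq:circle-scalar} by $r$ and using \eqref{eq:circle-adjacent-identity} therefore gives, pointwise for $r>0$,
\[
r(x^s-y^s)\ \ge\ \frac{rW^{s-1}(x-\lambda_j y)}{\lambda_j}
=\frac{1}{\lambda_j(sj-1)}\cdot\frac{1}{1}\cdot\frac{\lambda_j}{\lambda_j}\,rW^{s-1}\bigl([1+s(j-1)]x-(sj-1)y\bigr)\frac{1}{1+s(j-1)}\cdot\frac{1}{\lambda_j}\cdot\lambda_j .
\]
In cleaner form, the right-hand side equals $\frac{1}{sj-1}\bigl(r^2W^s/2\bigr)'$, since $\lambda_j(1+s(j-1))=sj-1$.

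The second step is to integrate this over $[\varepsilon,R]$ and pass to the limits. The boundary term satisfies $r^2W^s\to0$ as $r\to0$. As $r\to\infty$, $W=O(1/r)$, so $r^2W^s=O(r^{2-s})\to0$ because $s>2$. Hence $I_{j-1}(q)-I_j(q)\ge0$. The left-hand integral is absolutely convergent, so the limits are legitimate by dominated convergence.

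The third step, and the only delicate one, is strictness. Here I would use the strictness clause of Lemma~\ref{lem:circle-scalar}, which applies because $s>2$. The ascending series \eqref{eq:Bessel-series} shows that as $r\to0^+$ one has $x\sim c\,r^{2j-2}>0$ and $y\sim c'r^{2j}$, so $y/x=O(r^2)\to0$. Thus the pointwise inequality is strict on some interval $(0,\delta)$. Both sides are continuous there, so the integrated inequality is strict, which gives $I_{j-1}(q)>I_j(q)$.

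Chaining the three cases $j=1,2,3$ then yields \eqref{eq:circle-low-chain}. The endpoint $q=6$ (that is, $s=3$) is included, since the lemma covers $s=3$ and strictness still holds there.
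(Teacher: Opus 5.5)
Your proposal is correct and follows the paper's proof essentially verbatim. You apply Lemma~\ref{lem:circle-scalar} pointwise, rewrite the right side via \eqref{eq:circle-adjacent-identity} as $\frac{1}{sj-1}\bigl(r^2W^s/2\bigr)'$, remove the boundary terms using $W=O(r^{-1})$ and $s>2$, and obtain strictness from $y/x=O(r^2)$ near the origin. One slip: your long intermediate display, read literally, carries the constant $(sj-1)^{-2}$ instead of $(sj-1)^{-1}$; the identity $\lambda_j(1+s(j-1))=sj-1$ you state immediately afterwards gives the correct form and should replace that display.
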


\begin{proof}
With $s=q/2$, $x,y,W$ as above, Lemma~\ref{lem:circle-scalar} gives
\[
x^s-y^s
\ge
\frac{[1+s(j-1)]x-(sj-1)y}{sj-1}W^{s-1}.
\]
Multiplying by $r$ and using \eqref{eq:circle-adjacent-identity},
\[
r\bigl(|J_{j-1}|^{2s}-|J_j|^{2s}\bigr)
\ge
\frac1{sj-1}
\left(\frac{r^2W^s}{2}\right)'.
\]
The boundary term vanishes at both endpoints: near the origin the defining
series gives $J_{j-1}(r)=O(r^{j-1})$ and $J_j(r)=O(r^j)$, while at infinity
$W(r)=O(r^{-1})$, so $r^2W(r)^s=O(r^{2-s})\to0$.  Integration therefore gives
$I_{j-1}(2s)\ge I_j(2s)$.  The strict part of
Lemma~\ref{lem:circle-scalar} applies on a nontrivial interval near the origin,
so the inequality is strict.  Taking $j=1,2,3$ proves
\eqref{eq:circle-low-chain}.
\end{proof}

\subsubsection{High modes near the critical exponent}
For $k\ge4$ the main difficulty is concentrated near $q=4$, where the tail of
every mode has the same critical divergence.  We therefore first prove the
comparison on $4<q\le5$ by following the integral from the origin to infinity.
The proof itself determines which auxiliary estimate is needed on each part of
the radial axis.

\begin{proposition}\label{prop:circle-high-critical}
For every integer $k\ge4$ and every $4<q\le5$,
\[
I_k(q)<I_0(q).
\]
\end{proposition}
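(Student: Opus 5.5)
Since $I_3(q)<I_0(q)$ on $4<q\le5$ by Proposition~\ref{prop:circle-low-modes}, it suffices to treat $k\ge4$ directly against the zeroth mode. We plan to compare $I_k(q)$ with $I_0(q)$ by splitting the radial axis into three regions adapted to the order $k$: a nonoscillatory region $[0,\sigma k]$, a transition region $[\sigma k,Ck]$, and an exact-phase tail $[Ck,\infty)$. Here $\sigma<1$ and $C>1$ are fixed absolute constants to be chosen. The guiding idea is to never isolate the tail of $I_k$ or $I_0$ separately. At $q=4$ both tails behave like $\int (2/\pi r)^2 m(4)\,r\,dr$, which diverges logarithmically. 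We will therefore write $r|J_k|^q-r|J_0|^q$ on $[Ck,\infty)$ using the exact Hankel amplitude and phase, $J_\nu=B_\nu\cos\theta_\nu$. We then change variables $u=\theta_\nu(r)$ via \eqref{eq:phase-derivative}, obtaining
\[
\int r|J_\nu|^q\,dr=\int \frac{\pi}{2}\,r^2B_\nu^{q+2}|\cos u|^q\,du=\int b_{\nu,q}(r(u))\,|\cos u|^q\,du .
\]
Lemma~\ref{lem:Hankel-bounds}, extended to $\nu=0$ and $\nu=k$ (for $\nu<1/2$, $rB_\nu^2$ is nondecreasing instead, which we would verify from Nicholson's formula \eqref{eq:Nicholson}), makes $b_{\nu,q}$ monotone. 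Averaging over periods then reduces the tail of each mode to $m(q)$ times a decreasing weight plus a controlled boundary error.

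The key tail comparison will be between $b_{k,q}$ and $b_{0,q}$ at equal phase. Since $rB_k^2\ge 2/\pi\ge rB_0^2$ and $\theta_k(r)\approx r-\pi k/2-\pi/4$ lags $\theta_0$, a given phase is reached at a larger radius by the $k$-mode. The leading terms $(2/\pi)^{q/2+1}(\pi/2)r^{1-q/2}$ will cancel after matching phases. The residual then has two parts. The first is the phase shift $\approx \pi k/2$, which gives a tail deficit for $I_k$ of order $k\,(Ck)^{-q/2}$. The second is the amplitude excess $rB_k^2-2/\pi=O(k^2/r^2)$, which gives an excess of order $k^2(Ck)^{-q/2-1}$. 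Choosing $C$ large makes the net tail difference $\int_{Ck}^\infty r(|J_k|^q-|J_0|^q)$ bounded above by a small multiple of $k^{1-q/2}$, with a favorable sign where possible. On the matching segment of $J_0$ ($r\in[\,\cdot\,,Ck]$ shifted by the phase offset) we keep only a lower bound for $\int r|J_0|^q$, using $B_0^2\ge 2/(\pi r)$ and $m(q)\ge m(5)>1/3$ from \eqref{eq:cos-moments}.

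On $[0,\sigma k]$ we bound $J_k$ by Lemma~\ref{lem:Gaussian-before-zero} as in \eqref{eq:post-bridge-J-upper}. This makes the contribution exponentially small, of order $k^{2-q/2}(15/16)^{qk}$. On the transition region $[\sigma k,Ck]$ we use Landau's bound \eqref{eq:Landau-amplitude} together with $|J_k|^q\le|J_k|^4\cdot(0.7k^{-1/3})^{q-4}$. The quartic part would then be bounded by Watson's $B_k^2\le 2/(\pi\sqrt{r^2-k^2})$ for $r>k$ and by Landau directly near $r\approx k$, where the window $|r-k|\lesssim k^{1/3}$ costs $k\cdot k^{1/3}\cdot k^{-4/3}\cdot k^{-(q-4)/3}$. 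Against this, $I_0(q)$ is bounded below explicitly: its contribution from $[0,j_{0,1}]$ plus the first few oscillations, using the series \eqref{eq:Bessel-series} and $B_0^2\ge 2/(\pi r)$, gives an absolute constant. The $k$-mode pieces, by contrast, all decay like negative powers of $k$ uniformly for $4<q\le5$. This settles $k\ge k_0$, and the finitely many remaining $4\le k<k_0$ would be handled by the same estimates with explicit constants.

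The main obstacle is the transition region combined with uniformity as $q\downarrow4$. There the $k$-mode has its largest amplitude, of order $k^{-1/3}$, and the phase is not yet linear, while the factor $r$ in the measure offsets the decay. We would first try to absorb this region into the exact-phase computation as well, using monotonicity of $b_{k,q}$ for $r>k$, which holds by Lemma~\ref{lem:Hankel-bounds}. This reduces the whole region $r\ge k$ to a comparison of decreasing weights against $|\cos u|^q$, leaving only $[\sigma k,k]$ to estimate by Landau's bound. If the constants fail for small $k$, a numerical check of $k=4,5,\dots$ at the worst exponent $q\to4^+$ with the tails cancelled would be the fallback.
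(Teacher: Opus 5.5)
There is a genuine gap, and the first failure is on the pre-turning stretch $[\sigma k,k]$. Lemma~\ref{lem:Gaussian-before-zero} gives decay only where $\Phi_\alpha(z)<0$. But $\Phi_\alpha(1)=1-\log2-\frac{\alpha}{4(\alpha+1)}>0$ for every $\alpha$, so that bound cannot reach the turning point. Nothing else in your plan controls $J_k$ on $[\sigma'k,k-O(k^{1/3})]$ (for some fixed $\sigma'<1$) better than Landau's bound \eqref{eq:Landau-amplitude}, and your fallback uses exactly Landau on $[\sigma k,k]$. On an interval of length $\asymp k$ this gives only $\int_{\sigma k}^{k}|J_k|^q r\,dr\le(7/10)^qk^{-q/3}(1-\sigma^2)k^2/2\asymp k^{(6-q)/3}$, which is unbounded in $k$ for every $q\in(4,5]$. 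So the claim that all $k$-mode pieces decay in $k$ is false, and the case of large $k$ is not settled. What is missing is a bound that decays exponentially on the Airy scale $k^{1/3}$ right up to $r=k$. The paper gets this in Lemma~\ref{lem:pre-turning}: it shifts the contour in \eqref{eq:Bessel-integral} to height $\operatorname{arsinh}(k^{2/3}/r)$ and obtains $|J_k(r)|<\frac{21}{40}k^{-1/3}e^{-\frac45(k-r)k^{-1/3}}$. Hence $\int_0^k|J_k|^{4+t}r\,dr<\frac3{125}(d_0k^{-1/3})^t$, uniformly in $k$ and $t$.

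Past the turning point your bookkeeping also fails to close uniformly as $q\downarrow4$.
\begin{itemize}
\item Your own Airy-window cost $k^{-(q-4)/3}$ tends to $1$ as $q\downarrow4$.
\item As $q\downarrow4$, the piece $\int_{k+k^{1/3}}^{Ck}|J_k|^q r\,dr$ tends to $\int_{k+k^{1/3}}^{Ck}J_k^4 r\,dr$, which is of size $\asymp\log k$, since the mean of $J_k^4$ there is comparable to $(r^2-k^2)^{-1}$. It cannot be absorbed by an absolute-constant lower bound for $I_0$ taken from a few oscillations; it must be cancelled against zero-mode mass on a range growing with $k$.
\item Your tail orders drop the factor $r$ of the measure. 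At $q=4$ the phase-shift deficit and the amplitude excess are of order $1/C$ and $1/C^2$, independent of $k$.
\item A rigorous phase-matched comparison would need a quantitative lower bound on the lag $\theta_0-\theta_k$ over $[Ck,\infty)$. Nothing available provides one: $\theta_k\approx r-\frac{\pi k}2-\frac\pi4$ holds only for $r\gg k^2$, and Lemma~\ref{lem:Hankel-bounds} gives only $\theta_k'\le1$.
\end{itemize}

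The paper never matches phases. It separates the mean part, with oscillatory error $\frac35b_{k,q}(R)$. Watson's bound and the substitution $z=\sqrt{r^2-k^2}$ then bound the whole $k$-tail from $R_k=\sqrt{k^2+v_k^2}$ by $\frac{A(q)}{t}v_k^{-t/2}$ plus bounded errors. The choice $v_k=\frac75k^{2/3}$ keeps both the Landau window $[k,R_k]$ (of measure $v_k^2/2$) and $b_{k,q}(R_k)$ bounded. The common pole then cancels exactly against the zero-mode tail from $\zeta_0$ via \eqref{eq:common-pole-cancel}. This leaves the positive mass $\int_{\zeta_0}^{v_k}u^{1-q/2}\,du$, which together with the core on $[0,2]$ is handled by log-convexity.

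Two smaller errors:
\begin{itemize}
\item $B_0^2\ge2/(\pi r)$ is false, since Lemma~\ref{lem:B0} has $h\le1$; use $h\ge1-\frac1{8r^2}$ instead.
\item Because $rB_0^2$ increases, monotonicity of $b_{0,q}$ does not follow from it; it needs the quantitative bound $h'\le\frac1{4r^3}$.
\end{itemize}
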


\begin{proof}
Write $q=4+t$, $0<t\le1$, and set
\[
c:=\frac75,
\qquad
v_k:=ck^{2/3},
\qquad
R_k:=\sqrt{k^2+v_k^2}.
\]
We begin by decomposing the $k$th moment into three radial regions.  The estimates of these three pieces will later be combined with a fourth error term coming from the zero-mode tail.

First split
\[
I_k(4+t)
=
\int_0^k+\int_k^{R_k}+\int_{R_k}^\infty
|J_k(r)|^{4+t}r\,dr.
\]
The first piece is the pre-turning region.  To control it uniformly in the order, we need the following estimate exactly here.

\begin{lemma}\label{lem:pre-turning}
For every integer $k\ge4$ and $0<r\le k$,
\begin{equation}\label{eq:pre-turning}
|J_k(r)|
<
\frac{21}{40}k^{-1/3}
\exp\left(-\frac45(k-r)k^{-1/3}\right).
\end{equation}
Consequently, if $0\le t\le1$ and $d_0=21/40$, then
\begin{equation}\label{eq:pre-turning-integral}
\int_0^k|J_k(r)|^{4+t}r\,dr
<\frac3{125}\bigl(d_0k^{-1/3}\bigr)^t.
\end{equation}
\end{lemma}

\begin{proof}
We start from the integer-order integral representation
\eqref{eq:Bessel-integral}.  Put
\[
\eta:=k^{2/3},
\qquad
\gamma:=\operatorname{arsinh}(\eta/r).
\]
Since the integrand is entire in the angular variable and $k$ is an
integer, the two vertical sides cancel when the contour
$[-\pi,\pi]$ is shifted to $[-\pi+i\gamma,\pi+i\gamma]$.  Writing
$\theta=t+i\gamma$, we have $r\sinh\gamma=\eta$, and therefore
\[
\operatorname{Re}\,i(k\theta-r\sin\theta)
=
-k\gamma+\eta\cos t.
\]
We write $\mathcal I_\mu$ for the modified Bessel function of the first kind of order $\mu$, defined for $z>0$ by the convergent series
\[
\mathcal I_\mu(z)
:=
\sum_{m=0}^\infty
\frac{1}{m!\,\Gamma(m+\mu+1)}
\left(\frac z2\right)^{2m+\mu}.
\]
In particular,
\[
\mathcal I_0(z)
=
\frac1{2\pi}\int_{-\pi}^{\pi}e^{z\cos t}\,dt.
\]
Thus
\[
\begin{aligned}
|J_k(r)|
&\le
e^{-k\gamma}\frac1{2\pi}
\int_{-\pi}^{\pi}e^{\eta\cos t}\,dt\\
&=
e^{-k\gamma}\mathcal I_0(\eta)
=
e^{\eta-k\gamma}\eta^{-1/2}f(\eta),
\end{aligned}
\]
where
\[
f(z):=\sqrt z\,e^{-z}\mathcal I_0(z).
\]
The calligraphic notation $\mathcal I_\mu$ is used here to distinguish the modified Bessel functions from the modal moments $I_k(q)$ used throughout the circle argument.

We next control $f$.  Substitution of the modified Bessel equation for
$\mathcal I_0$ gives
\[
f''+2f'+\frac{f}{4z^2}=0.
\]
Moreover
\[
\frac{f'(1)}{f(1)}
=
-\frac12+\frac{\mathcal I_1(1)}{\mathcal I_0(1)}<0,
\]
because the positive series for $\mathcal I_0$ and $\mathcal I_1$ give
$\mathcal I_1(1)<\mathcal I_0(1)/2$ term by term after the first term.  If $f'$ had a
first zero $z_0>1$, then the differential equation would give
$f''(z_0)=-f(z_0)/(4z_0^2)<0$, which is incompatible with $f'$ reaching
zero from below for the first time.  Hence $f$ is strictly decreasing
on $[1,\infty)$.

Since $k\ge4$, we have $\eta=k^{2/3}>2$ and therefore $f(\eta)<f(2)$.
The positive series
\[
\mathcal I_0(2)=\sum_{m=0}^\infty\frac1{(m!)^2}
\]
gives $\mathcal I_0(2)<57/25$; for example, after summing through $m=4$ the
remaining tail is bounded by a geometric series with ratio at most
$1/36$.  Using also $\sqrt2<99/70$ and
$e^2>\sum_{m=0}^{7}2^m/m!=155/21$, we obtain
\[
f(\eta)<f(2)
=
\sqrt2\,e^{-2}\mathcal I_0(2)
<
\frac{11}{25}.
\]

It remains to estimate the exponential factor.  Define
\[
F_k(r):=\eta-k\operatorname{arsinh}(\eta/r).
\]
At $r=k$, with $u=k^{-1/3}$,
\[
F_k(k)=k\bigl(u-\operatorname{arsinh}u\bigr)
\le\frac16,
\]
because
\[
u-\operatorname{arsinh}u
=
\int_0^u\left(1-\frac1{\sqrt{1+s^2}}\right)ds
\le\int_0^u\frac{s^2}{2}\,ds
=\frac{u^3}{6}.
\]
Furthermore,
\[
F_k'(r)
=
\frac{k\eta}{r\sqrt{r^2+\eta^2}}
\ge
\frac{\eta}{\sqrt{k^2+\eta^2}}
=
\frac{k^{-1/3}}{\sqrt{1+k^{-2/3}}}
>
\frac45\,k^{-1/3},
\qquad 0<r\le k,
\]
where the last inequality follows from $k\ge4$.  Integrating from $r$
to $k$ gives
\[
F_k(r)
\le
\frac16-\frac45(k-r)k^{-1/3}.
\]
Finally, the positive exponential series gives
$e^{1/6}<105/88$, and hence
\[
\frac{11}{25}e^{1/6}<\frac{21}{40}.
\]
Combining the last estimates with the contour bound proves
\eqref{eq:pre-turning}.

For the integral consequence, raise \eqref{eq:pre-turning} to the power
$4+t$ and use $r\le k$:
\[
\begin{aligned}
\int_0^k|J_k(r)|^{4+t}r\,dr
&<
d_0^{\,4+t}k^{-(4+t)/3}
\int_0^k
e^{-\frac45(4+t)(k-r)k^{-1/3}}r\,dr\\
&\le
d_0^{\,4+t}k^{1-(4+t)/3}
\int_0^\infty
e^{-\frac45(4+t)s\,k^{-1/3}}\,ds\\
&=
\frac{5d_0^4}{4(4+t)}
\bigl(d_0k^{-1/3}\bigr)^t.
\end{aligned}
\]
Since $0\le t\le1$,
\[
\frac{5d_0^4}{4(4+t)}
\le
\frac5{16}\left(\frac{21}{40}\right)^4
<
\frac3{125},
\]
which proves \eqref{eq:pre-turning-integral}.
\end{proof}

Returning to the first piece, Lemma~\ref{lem:pre-turning} gives
\[
E_{1,k}(t):=
\int_0^k|J_k(r)|^{4+t}r\,dr
<
\frac3{125}\bigl(d_0k^{-1/3}\bigr)^t,
\qquad d_0=\frac{21}{40}.
\]
On the transition interval, Landau's bound \eqref{eq:Landau-amplitude}
and $R_k^2-k^2=v_k^2$ give
\[
\begin{aligned}
E_{2,k}(t)
&:=
\int_k^{R_k}|J_k(r)|^{4+t}r\,dr\\
&<
\left(\frac7{10}\right)^{4+t}
k^{-(4+t)/3}\frac{v_k^2}{2}\\
&=
\frac{c^2}{2}\left(\frac7{10}\right)^4
\left(\frac7{10}k^{-1/3}\right)^t.
\end{aligned}
\]
The third piece is the oscillatory tail.  This is where the common $t^{-1}$ singularity appears, so we must compare the exact phases before making any rough estimate.  The zero-order amplitude requires one quantitative refinement.

\begin{lemma}\label{lem:B0}
Recall that $B_0$ and $b_{0,q}$ are the $\nu=0$ specializations of the
quantities defined in \eqref{eq:Hankel-amplitude} and \eqref{eq:bnuq},
respectively.  For $r>0$ define
\[
h(r):=\frac{\pi r}{2}B_0(r)^2,
\qquad\text{equivalently}\qquad
B_0(r)^2=\frac{2h(r)}{\pi r}.
\]
Then
\begin{equation}\label{eq:h-bounds}
1-\frac1{8r^2}\le h(r)\le1,
\qquad
0<h'(r)\le\frac1{4r^3}.
\end{equation}
For $q\ge4$, $b_{0,q}$ is decreasing on $[1,\infty)$ and tends to zero as $r\to\infty$.
\end{lemma}

\begin{proof}
The two-sided bound for $h$ is classical.  In the notation of
\cite[\S10.18]{DLMF}, $B_0$ is the modulus function $M_0$, and the expansion
\cite[Eq.~10.18.17 and the signed-remainder statement in \S10.18(iii)]{DLMF},
specialized to order zero, gives
\[
1-\frac1{8r^2}\le \frac{\pi r}{2}B_0(r)^2\le1,
\qquad r>0;
\]
see also \cite[pp.~448--449]{Watson}.  Thus only the derivative estimate in
\eqref{eq:h-bounds} needs to be recorded.

Indeed, Nicholson's formula \eqref{eq:Nicholson} at $\nu=0$, followed by
$z=r\sinh u$, gives
\[
h(r)=\frac4\pi\int_0^\infty
\frac{K_0(2z)}{\sqrt{1+(z/r)^2}}\,dz.
\]
Differentiation under the integral sign yields
\[
h'(r)
=\frac4\pi\int_0^\infty
K_0(2z)\frac{z^2}{r^3}
\left(1+\frac{z^2}{r^2}\right)^{-3/2}dz>0.
\]
Moreover,
\[
\int_0^\infty z^2K_0(2z)\,dz
=\frac14\int_0^\infty\operatorname{sech}^3t\,dt
=\frac{\pi}{16},
\]
where we used the positive integral representation of $K_0$ and Tonelli's
theorem.  Hence
\[
0<h'(r)\le\frac4{\pi r^3}\int_0^\infty z^2K_0(2z)\,dz
=\frac1{4r^3}.
\]
This proves \eqref{eq:h-bounds}.

Since $B_0(r)^2=2h(r)/(\pi r)$,
\[
\frac{r(B_0^2)'}{B_0^2}
=-1+\frac{rh'}h.
\]
For $r\ge1$, \eqref{eq:h-bounds} gives
\[
\frac{rh'}h
\le
\frac{1/(4r^2)}{1-1/(8r^2)}
=\frac2{8r^2-1}.
\]
Taking the logarithmic derivative of
$b_{0,q}=(\pi/2)r^2B_0^{q+2}$ therefore yields
\[
\frac{rb_{0,q}'}{b_{0,q}}
\le
1-\frac q2+\frac{q+2}{8r^2-1}<0,
\qquad r\ge1,\quad q\ge4.
\]
Thus $b_{0,q}$ is decreasing on $[1,\infty)$.  Finally, $h(r)\le1$ gives
$B_0(r)^2\le2/(\pi r)$, and hence
$b_{0,q}(r)=O(r^{1-q/2})\to0$.  This proves the lemma.
\end{proof}

We now introduce the exact-phase tail calculation needed for this third piece.  The quantities $m(p)$ and $A(p)$ were fixed in Section~\ref{sec:preliminaries}, while $B_\nu$, $\theta_\nu$, and $b_{\nu,q}$ were introduced above in the higher-dimensional argument.

The main difficulty for $k\ge4$ is the common pole as $q\to4^+$.  We keep this pole explicit and compare the two tails before estimating them.

For $4\le q\le5$, let
\begin{equation}\label{eq:Gq}
G_q(u):=\int_0^u\bigl(|\cos w|^q-m(q)\bigr)\,dw.
\end{equation}
Here $m(q)$ is the cosine moment introduced in \eqref{eq:m-A}.  Since
$|\cos w|^q$ is $\pi$-periodic and
\[
\int_0^{\pi/2}\bigl(\cos^q w-m(q)\bigr)\,dw=0,
\]
the primitive $G_q$ is $\pi$-periodic and
$G_q(\pi/2+n\pi)=0$.  On $[0,\pi/2]$ the integrand is decreasing, hence
$G_q$ first increases and then decreases and is nonnegative.  Moreover,
for $4\le q\le5$,
\[
\cos^q w\le\cos^4 w,
\qquad
m(q)\ge m(5)>\frac13
\]
by \eqref{eq:cos-moments}.  Therefore
\[
0\le G_q(u)
\le
\Psi(u):=
\int_0^u\left(\cos^4w-\frac13\right)\,dw
=
\frac{u}{24}+\frac{\sin2u}{4}+\frac{\sin4u}{32}.
\]
The derivative $\Psi'(u)=\cos^4u-1/3$ has a unique zero
$u_*\in(0,\pi/4)$, so $\Psi$ attains its maximum there.  Since
$\cos^2u_*=1/\sqrt3$,
\[
\sin^2(4u_*)
=
16\frac1{\sqrt3}
\left(1-\frac1{\sqrt3}\right)
\left(\frac2{\sqrt3}-1\right)^2
=
\frac{176\sqrt3-304}{9}
<
\frac19,
\]
where the last inequality is equivalent to $176\sqrt3<305$.  Thus
$\sin4u_*<1/3$, and with $\pi<22/7$,
\[
\Psi(u_*)
<
\frac{\pi}{96}+\frac14+\frac1{96}
<
\frac3{10}.
\]
By the symmetry on the second half-period the same bound controls the
absolute value of $G_q$, and hence
\begin{equation}\label{eq:Gq-bound}
\|G_q\|_\infty<\frac3{10}.
\end{equation}

We now derive the exact phase integration-by-parts formula used in the
tail comparison.  Recall that $B_\nu$ and $\theta_\nu$ were defined in
\eqref{eq:Hankel-amplitude}, while $b_{\nu,q}$ was defined in
\eqref{eq:bnuq}.  Differentiating \eqref{eq:Gq} along the phase gives
\[
\frac{d}{dr}G_q(\theta_\nu(r))
=
\bigl(|\cos\theta_\nu(r)|^q-m(q)\bigr)\theta_\nu'(r).
\]
Using the exact phase derivative \eqref{eq:phase-derivative},
\[
\begin{aligned}
b_{\nu,q}(r)\frac{d}{dr}G_q(\theta_\nu(r))
&=
\frac{\pi}{2}r^2B_\nu(r)^{q+2}
\frac{2}{\pi rB_\nu(r)^2}
\bigl(|\cos\theta_\nu(r)|^q-m(q)\bigr)\\
&=
rB_\nu(r)^q
\bigl(|\cos\theta_\nu(r)|^q-m(q)\bigr).
\end{aligned}
\]
Integrating this identity over $[R,S]$ and integrating by parts yields
\[
\int_R^S rB_\nu(r)^q
\bigl(|\cos\theta_\nu(r)|^q-m(q)\bigr)\,dr
=
\bigl[b_{\nu,q}G_q(\theta_\nu)\bigr]_R^S
-
\int_R^S b_{\nu,q}'(r)G_q(\theta_\nu(r))\,dr.
\]
If $b_{\nu,q}$ is decreasing and tends to zero, then $G_q$ is bounded
and the upper endpoint tends to zero as $S\to\infty$.  We therefore obtain
\begin{equation}\label{eq:phase-integration-by-parts}
\int_R^\infty
rB_\nu(r)^q\bigl(|\cos\theta_\nu(r)|^q-m(q)\bigr)\,dr
=
-b_{\nu,q}(R)G_q(\theta_\nu(R))
-
\int_R^\infty b_{\nu,q}'(r)G_q(\theta_\nu(r))\,dr.
\end{equation}
The hypotheses on $b_{\nu,q}$ are supplied by
Lemma~\ref{lem:Hankel-bounds} when $\nu\ge1/2$ and $q>2$, and by
Lemma~\ref{lem:B0} for $\nu=0$ and $q\ge4$.  Thus the oscillatory
remainder is controlled by the endpoint value of $b_{\nu,q}$ without
replacing the exact phase by an approximate trigonometric phase.

The preceding identity leads to the following tail comparison, which we state and prove at the point where it is needed.

\begin{proposition}\label{prop:circle-tail}
Let $4<q\le5$, $t:=q-4$, and $k\ge1$.  If $v>0$ and
\[
R:=\sqrt{k^2+v^2},
\]
then
\begin{equation}\label{eq:circle-tail-upper}
\int_R^\infty |J_k(r)|^q r\,dr
\le
\frac{A(q)}{t}v^{-t/2}
+\frac35\left(\frac{2}{\pi}\right)^{q/2}\bigl(v^{1-q/2}+k^2v^{-1-q/2}\bigr).
\end{equation}
At the first positive zero $\zeta_0$ of $J_0$,
\begin{equation}\label{eq:circle-tail-lower}
\int_{\zeta_0}^\infty |J_0(r)|^q r\,dr
\ge
\frac{A(q)}{t}\zeta_0^{-t/2}
-\left(\frac{2}{\pi}\right)^{q/2}\left(
\frac3{10}\zeta_0^{1-q/2}
+\frac{m(q)}8\zeta_0^{-q/2}
\right).
\end{equation}
\end{proposition}

The quantities $m(q)$ and $A(q)$ in \eqref{eq:circle-tail-upper}--\eqref{eq:circle-tail-lower} are the functions introduced in \eqref{eq:m-A}.

\begin{proof}
We separate the mean contribution from the oscillatory remainder.  Since
\[
J_k(r)=B_k(r)\cos\theta_k(r),
\]
we have
\[
\int_R^\infty |J_k(r)|^q r\,dr
=
m(q)\int_R^\infty rB_k(r)^q\,dr
+
\int_R^\infty rB_k(r)^q
\bigl(|\cos\theta_k(r)|^q-m(q)\bigr)\,dr.
\]
The second integral is exactly the quantity in
\eqref{eq:phase-integration-by-parts}.  Since $k\ge1$, Lemma~\ref{lem:Hankel-bounds}
shows that $b_{k,q}$ is decreasing and tends to zero.  Therefore
\eqref{eq:Gq-bound} gives
\[
\left|
\int_R^\infty rB_k(r)^q
\bigl(|\cos\theta_k(r)|^q-m(q)\bigr)\,dr
\right|
\le
\frac35\,b_{k,q}(R).
\]
Here we used
\[
|b_{k,q}(R)G_q(\theta_k(R))|
\le\frac3{10}b_{k,q}(R)
\]
and
\[
\left|\int_R^\infty b_{k,q}'(r)G_q(\theta_k(r))\,dr\right|
\le
\frac3{10}\int_R^\infty(-b_{k,q}'(r))\,dr
=
\frac3{10}b_{k,q}(R).
\]

For $k\ge1$, Lemma \ref{lem:Hankel-bounds} yields
\[
B_k(r)^2\le\frac{2}{\pi\sqrt{r^2-k^2}},
\qquad r>k.
\]
Hence, with $R=\sqrt{k^2+v^2}$,
\[
\begin{aligned}
m(q)\int_R^\infty rB_k(r)^q\,dr
&\le
m(q)\left(\frac2\pi\right)^{q/2}
\int_R^\infty r(r^2-k^2)^{-q/4}\,dr.
\end{aligned}
\]
Set $z:=\sqrt{r^2-k^2}$, so that $r\,dr=z\,dz$ and the lower endpoint becomes $v$.  Since $q=4+t$,
\[
\int_v^\infty z^{1-q/2}\,dz
=\frac{2}{t}v^{-t/2}.
\]
Thus the mean term is exactly bounded by
\[
\frac{A(q)}{t}v^{-t/2},
\qquad
A(q)=2\left(\frac2\pi\right)^{q/2}m(q).
\]
For the oscillatory error, the same amplitude bound at $R$ gives
\[
\begin{aligned}
b_{k,q}(R)
&=\frac\pi2R^2B_k(R)^{q+2}\\
&\le
\left(\frac2\pi\right)^{q/2}
R^2v^{-1-q/2}\\
&=
\left(\frac2\pi\right)^{q/2}
\bigl(v^{1-q/2}+k^2v^{-1-q/2}\bigr).
\end{aligned}
\]
Combining the last three displays proves \eqref{eq:circle-tail-upper}.

For the zero mode, Lemma~\ref{lem:B0} guarantees that $b_{0,q}$ is decreasing
and tends to zero for $q\ge4$, so \eqref{eq:phase-integration-by-parts} is
applicable.  Start it at the first positive zero $\zeta_0$ of $J_0$.  Because
\[
J_0(\zeta_0)=B_0(\zeta_0)\cos\theta_0(\zeta_0)=0
\]
and $B_0(\zeta_0)>0$, we have $\theta_0(\zeta_0)=\pi/2$ modulo $\pi$.  Hence $G_q(\theta_0(\zeta_0))=0$, so the endpoint term in \eqref{eq:phase-integration-by-parts} vanishes exactly.  The remaining oscillatory term is bounded below by
\[
-\frac3{10}b_{0,q}(\zeta_0).
\]
By Lemma \ref{lem:B0},
\[
B_0(r)^2=\frac{2h(r)}{\pi r},
\qquad
h(r)\ge1-\frac1{8r^2}.
\]
Since $q/2\ge2$, Bernoulli's inequality $(1-x)^{q/2}\ge1-(q/2)x$ for the present range gives
\[
B_0(r)^q
\ge
\left(\frac{2}{\pi}\right)^{q/2}r^{-q/2}
\left(1-\frac{q}{16r^2}\right),
\qquad r\ge\zeta_0.
\]
Integrating the mean part from $\zeta_0$ to infinity produces
\[
\frac{A(q)}{t}\zeta_0^{-t/2}
-
\left(\frac2\pi\right)^{q/2}
\frac{m(q)}8\zeta_0^{-q/2},
\]
while $b_{0,q}(\zeta_0)\le(2/\pi)^{q/2}\zeta_0^{1-q/2}$ follows from $h\le1$.  These two estimates give \eqref{eq:circle-tail-lower}.
\end{proof}

The two tail estimates have the same singular factor.  Before returning to the
main comparison we record the cancellation in the exact form in which it will
be used:
\begin{equation}\label{eq:common-pole-cancel}
\frac{A(q)}{t}\bigl(\zeta_0^{-t/2}-v^{-t/2}\bigr)
=\left(\frac{2}{\pi}\right)^{q/2}m(q)
\int_{\zeta_0}^v u^{1-q/2}\,du.
\end{equation}
Thus no divergent fourth moment is interpolated; the common pole is removed
before the remaining estimates are made.

Returning to the $k$th mode, recall from the proof of Proposition~\ref{prop:circle-high-critical} that $v_k=\frac75 k^{2/3}$ and $R_k=\sqrt{k^2+v_k^2}$.  Applying Proposition~\ref{prop:circle-tail} with $v=v_k$ yields
\[
\int_{R_k}^\infty|J_k(r)|^{4+t}r\,dr
\le
\frac{A(4+t)}{t}v_k^{-t/2}
+E_{3,k}(t),
\]
where
\[
E_{3,k}(t)
=
\frac35\left(\frac2\pi\right)^{2+t/2}
\left(
v_k^{-1-t/2}
+k^2v_k^{-3-t/2}
\right).
\]

To compare this with the zero mode, we next need only a fixed positive core and a crude localization of the first zero.  We record that elementary information now.

\begin{lemma}\label{lem:J0-core}
For $0\le x\le1$,
\begin{equation}\label{eq:J0-core}
0\le
P(x):=1-x+\frac{x^2}{4}-\frac{x^3}{36}
\le J_0(2\sqrt x)
\le1-x+\frac{x^2}{4}
\le e^{-x}.
\end{equation}
Let $\zeta_0$ denote the first positive zero of $J_0$.  Then
\begin{equation}\label{eq:zeta0-bounds}
\frac{12}{5}<\zeta_0<\frac52.
\end{equation}
Moreover,
\begin{equation}\label{eq:I0-lower}
I_0(q)\ge\frac2{q+1},
\qquad q>0,
\end{equation}
whenever the left-hand side is interpreted as an extended integral.
\end{lemma}

\begin{proof}
The alternating series
\[
J_0(2\sqrt x)=\sum_{m=0}^\infty\frac{(-1)^m x^m}{(m!)^2}
\]
gives the two polynomial bounds in \eqref{eq:J0-core}: after the displayed
terms the absolute values of the remaining summands decrease on
$0\le x\le1$.  The polynomial $P$ is nonnegative on $[0,1]$, so the
leftmost inequality follows as well.

The first positive zero is the classical tabulated value
\[
\zeta_0=j_{0,1}=2.404825557\ldots;
\]
see \cite[Chapter~XV]{Watson} and \cite[\S10.75(iii)]{DLMF}.  In
particular, \eqref{eq:zeta0-bounds} follows immediately.

Finally, the integral representation \eqref{eq:Bessel-integral} at
$k=0$, after taking the real part and using symmetry, gives
\[
J_0(r)=\frac1\pi\int_0^\pi\cos(r\sin\theta)\,d\theta.
\]
Since $\cos u\ge1-u^2/2$ for every real $u$,
\[
\begin{aligned}
J_0(r)
&\ge
\frac1\pi\int_0^\pi
\left(1-\frac{r^2\sin^2\theta}{2}\right)\,d\theta\\
&=
1-\frac{r^2}{4},
\qquad 0\le r\le2.
\end{aligned}
\]
The right-hand side is nonnegative on $[0,2]$.  Therefore, with the
change of variables $u=1-r^2/4$,
\[
\begin{aligned}
I_0(q)
&\ge
\int_0^2\left(1-\frac{r^2}{4}\right)^q r\,dr\\
&=
2\int_0^1u^q\,du
=
\frac2{q+1},
\end{aligned}
\]
which proves \eqref{eq:I0-lower}.
\end{proof}

For the zero mode, let
\[
Z_0(p;2):=\int_0^2|J_0(r)|^p r\,dr.
\]
Since $\zeta_0>2$, Proposition~\ref{prop:circle-tail} gives
\[
I_0(4+t)
\ge
Z_0(4+t;2)
+\frac{A(4+t)}{t}\zeta_0^{-t/2}
-E_4(t),
\]
where
\[
E_4(t)
=
\left(\frac2\pi\right)^{2+t/2}
\left(
\frac3{10}\zeta_0^{-1-t/2}
+\frac{m(4+t)}8\zeta_0^{-2-t/2}
\right).
\]
Subtracting the upper bound for $I_k$ from this lower bound for $I_0$
and using \eqref{eq:common-pole-cancel}, we obtain
\[
\begin{aligned}
I_0(4+t)-I_k(4+t)
\ge\;&
Z_0(4+t;2)\\
&+
\left(\frac2\pi\right)^{2+t/2}m(4+t)
\int_{\zeta_0}^{v_k}u^{-1-t/2}\,du\\
&-
\bigl(E_{1,k}(t)+E_{2,k}(t)+E_{3,k}(t)+E_4(t)\bigr).
\end{aligned}
\]
By Lemma~\ref{lem:J0-core}, $\zeta_0<5/2$, while
\[
v_k\ge\frac75\,4^{2/3}>\frac72
\qquad(k\ge4).
\]
Hence $[5/2,7/2]\subset[\zeta_0,v_k]$, and it is enough to retain this
fixed part of the positive integral.  Define
\begin{equation}\label{eq:F-circle}
F(p)
:=
Z_0(p;2)
+\left(\frac2\pi\right)^{p/2}m(p)
\int_{5/2}^{7/2}u^{1-p/2}\,du
\end{equation}
and set $H_k(t):=E_{1,k}(t)+E_{2,k}(t)+E_{3,k}(t)+E_4(t)$. Then
\begin{equation}\label{eq:circle-FH}
I_0(4+t)-I_k(4+t)
\ge
F(4+t)-H_k(t).
\end{equation}

We next record the uniform estimate for the error.  Each $t$-dependent
base occurring above is at most $8/15$.  Indeed, for $k\ge4$,
\[
k^{-1/3}<\frac23,
\qquad
k^{-2/3}<\frac25,
\qquad
v_k>\frac72,
\]
and, using $\pi>25/8$ and $\zeta_0>12/5$,
\[
d_0k^{-1/3}<\frac8{15},
\quad
\frac7{10}k^{-1/3}<\frac8{15},
\quad
\sqrt{\frac{2}{\pi v_k}}<\frac8{15},
\quad
\sqrt{\frac{2}{\pi\zeta_0}}<\frac8{15}.
\]
Also $m(4+t)\le m(4)=3/8$.  To make the numerical estimate transparent, first factor the $t$-dependent bases in the last two errors:
\[
\begin{aligned}
E_{3,k}(t)
&=\frac35\left(\frac2\pi\right)^2
\bigl(v_k^{-1}+k^2v_k^{-3}\bigr)
\left(\sqrt{\frac{2}{\pi v_k}}\right)^t,\\
E_4(t)
&\le\left(\frac2\pi\right)^2
\left(\frac3{10}\zeta_0^{-1}+\frac3{64}\zeta_0^{-2}\right)
\left(\sqrt{\frac{2}{\pi\zeta_0}}\right)^t.
\end{aligned}
\]
Using $v_k^{-1}=c^{-1}k^{-2/3}$, $k^2v_k^{-3}=c^{-3}$, together with the bounds above, gives
\[
\begin{aligned}
E_{1,k}(t)&<\frac3{125}\left(\frac8{15}\right)^t,
&
E_{2,k}(t)&<\frac{13}{55}\left(\frac8{15}\right)^t,\\
E_{3,k}(t)&<\frac4{25}\left(\frac8{15}\right)^t,
&
E_4(t)&<\frac3{55}\left(\frac8{15}\right)^t.
\end{aligned}
\]
Indeed, the last three numerical coefficients follow respectively from
\[
\frac{c^2}{2}\left(\frac7{10}\right)^4<\frac{13}{55},\qquad
\frac35\left(\frac2\pi\right)^2
\left(c^{-1}k^{-2/3}+c^{-3}\right)<\frac4{25},
\]
and
\[
\left(\frac2\pi\right)^2
\left(\frac3{10}\zeta_0^{-1}+\frac3{64}\zeta_0^{-2}\right)<\frac3{55}.
\]
Adding the four estimates yields
\begin{equation}\label{eq:Hk-bound}
H_k(t)<\frac{19}{40}\left(\frac8{15}\right)^t.
\end{equation}

The function $F$ is a genuine positive moment on a finite measure
space.  The first term in \eqref{eq:F-circle} is already the $p$th
moment of $|J_0|$ on $[0,2]$ with measure $r\,dr$.  For the second term,
using the definition of $m(p)$ gives the exact identity
\[
\begin{aligned}
&\left(\frac2\pi\right)^{p/2}m(p)
\int_{5/2}^{7/2}u^{1-p/2}\,du\\
&\qquad=
\int_{5/2}^{7/2}\int_0^\pi
\left(\sqrt{\frac2\pi}\,u^{-1/2}|\cos w|\right)^p
\frac{u\,dw\,du}{\pi}.
\end{aligned}
\]
To combine these two positive terms into a single moment, we place both
pieces in the plane and set
\[
\Omega_1:=[0,2]\times\{0\},
\qquad
\Omega_2:=\left[\frac52,\frac72\right]\times[0,\pi].
\]
Their first-coordinate ranges are disjoint, so
$\Omega_1\cap\Omega_2=\varnothing$.  In particular, the function below is
defined separately on the two components and no compatibility between the
two formulas is required: a point $(r,0)\in\Omega_1$ has $r\le2$ and hence
does not belong to $\Omega_2$, whereas a point $(u,0)\in\Omega_2$ has
$u\ge5/2$ and hence does not belong to $\Omega_1$.

Put $\Omega:=\Omega_1\cup\Omega_2$ and define a measure $\mu$ on $\Omega$ by
\[
\int_\Omega f\,d\mu
:=
\int_0^2 f(r,0)\,r\,dr
+
\frac1\pi\int_{5/2}^{7/2}\int_0^\pi f(u,w)\,u\,dw\,du.
\]
Define
\[
X(r,0):=|J_0(r)|,\qquad (r,0)\in\Omega_1,
\]
and
\[
X(u,w):=\sqrt{\frac2\pi}\,u^{-1/2}|\cos w|,
\qquad (u,w)\in\Omega_2.
\]
Then
\[
\begin{aligned}
\int_\Omega X^p\,d\mu
&=\int_0^2|J_0(r)|^p r\,dr\\
&\quad+
\frac1\pi\int_{5/2}^{7/2}\int_0^\pi
\left(\sqrt{\frac2\pi}\,u^{-1/2}|\cos w|\right)^p
u\,dw\,du\\
&=F(p).
\end{aligned}
\]
Moreover,
\[
\mu(\Omega)=\int_0^2r\,dr+
\frac1\pi\int_{5/2}^{7/2}\int_0^\pi u\,dw\,du
=2+3=5.
\]
Thus $F$ is indeed a positive moment on a finite measure space.  At this point the only remaining issue is a lower bound for the positive quantity $F(4+t)$ from its values at two fixed exponents.  This is exactly the standard log-convexity/extrapolation principle for moments, so we introduce it only now.

\begin{lemma}[Log-convexity of moments]\label{lem:moments}
Let $Y\ge0$ be measurable on a measure space $(\mathcal X,\mu)$ and put
$M(p):=\int_{\mathcal X}Y^p\,d\mu$.  Whenever the moments appearing below are finite,
\begin{equation}\label{eq:logconvex-moments}
M((1-\theta)p_0+\theta p_1)
\le
M(p_0)^{1-\theta}M(p_1)^\theta,
\qquad 0\le\theta\le1.
\end{equation}
If $M(3),M(4)>0$, then
\begin{equation}\label{eq:moment-extrapolation}
M(4+t)
\ge
\frac{M(4)^{1+t}}{M(3)^t},
\qquad t\ge0.
\end{equation}
\end{lemma}

The log-convexity in \eqref{eq:logconvex-moments} is the classical H\"older interpolation inequality for moments; see, for example, \cite[Chapter~2]{HardyLittlewoodPolya} or \cite[Chapter~6]{MitrinovicPecaricFink}.  The extrapolation formula is the same statement with
\[
4=\frac{t}{1+t}\,3+\frac1{1+t}(4+t),
\]
followed by a rearrangement.  This proves Lemma~\ref{lem:moments}; we record it
only in the form needed later.

All components of the high-mode decomposition are now available: the
pre-turning estimate controls $[0,k]$, Landau's bound controls the transition
interval, Proposition~\ref{prop:circle-tail} compares the exact-phase tails,
and Lemma~\ref{lem:J0-core} supplies a fixed positive zero-mode core.  We now
assemble them.

Therefore Lemma~\ref{lem:moments}, applied with $\mathcal X=\Omega$ and $Y=X$, gives
\[
F(4+t)
\ge
\frac{F(4)^{1+t}}{F(3)^t}.
\]
The core estimate \eqref{eq:J0-core}, together with
\eqref{eq:cos-moments}, yields
\[
F(4)>\frac{12}{25},
\qquad
F(3)<\frac56.
\]
Hence
\begin{equation}\label{eq:F-growth}
F(4+t)
>
\frac{12}{25}
\left(\frac{72}{125}\right)^t.
\end{equation}
Since $12/25>19/40$ and $72/125>8/15$,
the lower bound \eqref{eq:F-growth} for the positive contribution is
strictly larger than the error estimate coming from
\eqref{eq:circle-FH} and \eqref{eq:Hk-bound}.  More precisely,
\[
I_0(4+t)-I_k(4+t)
\ge F(4+t)-H_k(t)>0,
\]
for every $k\ge4$ and $0<t\le1$.  This proves Proposition~\ref{prop:circle-high-critical}.
\end{proof}

\subsubsection{Completion of the interval $5\le q<6$}

Only the high modes remain in this interval.  At $q=5$, the quantitative
estimates in the proof of Proposition~\ref{prop:circle-high-critical} give,
for every $k\ge4$,
\begin{equation}\label{eq:Ik5}
I_k(5)<\frac{13}{40}.
\end{equation}
By the uniform Landau bound \eqref{eq:Landau-amplitude},
\[
I_k(q)
<
\frac{13}{40}\left(\frac7{15}\right)^{q-5},
\qquad 5\le q<6,
\]
while Lemma~\ref{lem:J0-core} gives $I_0(q)\ge2/(q+1)$.  Hence
\begin{equation}\label{eq:circle-high-large-q}
\frac{I_k(q)}{I_0(q)}
<
\frac{13(q+1)}{80}
\left(\frac7{15}\right)^{q-5}<1,
\qquad k\ge4,\quad 5\le q<6.
\end{equation}

We can now finish the circle case.  Proposition~\ref{prop:circle-low-modes}
covers $j=1,2,3$ for $4<q<6$; Proposition~\ref{prop:circle-high-critical}
covers $k\ge4$ for $4<q\le5$; and
\eqref{eq:circle-high-large-q} covers $k\ge4$ for $5\le q<6$.
Together with the previously known range \eqref{eq:circle-known-large-q},
this proves \eqref{eq:circle-goal} for every $q>4$ and completes the proof of
Theorem~\ref{thm:main-extension} in dimension two.

\section{Proof of Theorem~\ref{thm:intro-global-stability}}\label{sec:global-stability}

Theorem~\ref{thm:main-extension} has already settled the modal comparison, so
no new Bessel estimate is needed here.  We therefore start the proof from the
deficit itself and ask how it splits when the $L^2$ mass is distributed among
the harmonic degrees.  The exact decomposition will expose the spectral gap
immediately; after the lower bound is obtained, the only remaining issue will
be the equality case.

Fix a finite admissible $q$ and put
\[
p:=\frac q2,
\qquad K_d:=(2\pi)^d,
\qquad u_k(r):=r^{-\nu}J_{\nu+k}(r),
\qquad \phi_k(r):=u_k(r)^2,
\]
\[
a_k:=\|P_kf\|_2^2,
\qquad I_k:=I_{\nu,k}(q),
\qquad
\lambda_k:=\|\phi_k\|_{L^p(r^{d-1}dr)}=I_k^{2/q}.
\]
Stempak's weighted Bessel estimate \cite{Stempak} (see also
\cite[Theorem~1(i)]{COS}) gives $\lambda_k\to0$.  Hence
\begin{equation}\label{eq:lambda-star}
\lambda_*:=\max_{k\ge1}\lambda_k<\lambda_0,
\qquad
\mathcal K_*:=\{k\ge1:\lambda_k=\lambda_*\}
\end{equation}
is a finite nonempty set.  When $d\ge3$, Theorem~\ref{thm:main-extension}
gives $\mathcal K_*=\{1\}$.

Let
\[
\mathcal M:=\{c\mathbf1:c\in\mathbb C\},
\qquad
\operatorname{dist}(f,\mathcal M)^2
=\sum_{k\ge1}a_k,
\]
and recall
\begin{equation}\label{eq:deficit}
D_{d,q}(f)
:=C_{d,q}^2\|f\|_2^2
-\|\widehat{f\sigma}\|_{L^q_{\mathrm{rad}}L^2_{\mathrm{ang}}}^2,
\qquad
C_{d,q}^2=K_d\lambda_0.
\end{equation}
To obtain the exact decomposition, the only additional quantity we need is the
remainder in Minkowski's inequality.  We introduce it precisely at the point
where it enters the deficit:
\begin{equation}\label{eq:triangle-remainder}
\mathcal T_q(f)
:=
\sum_{k=0}^\infty a_k\lambda_k
-
\left\|\sum_{k=0}^\infty a_k\phi_k\right\|_{L^p(r^{d-1}dr)}
\ge0.
\end{equation}

\begin{proposition}[Exact deficit decomposition]\label{prop:exact-deficit}
For every $f\in L^2(\mathbb S^{d-1})$,
\begin{equation}\label{eq:exact-deficit-decomp}
\frac{D_{d,q}(f)}{K_d}
=
\sum_{k\ge1}(\lambda_0-\lambda_k)a_k
+\mathcal T_q(f).
\end{equation}
\end{proposition}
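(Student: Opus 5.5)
This is a bookkeeping identity, so the plan is a direct algebraic computation on the spectral representation. The first step is to use \eqref{eq:spectral-identity} in the normalized form
\[
\|\widehat{f\sigma}\|_{L^q_{\mathrm{rad}}L^2_{\mathrm{ang}}}^2
=K_d\Bigl\|\sum_{k\ge0}a_k\phi_k\Bigr\|_{L^p(r^{d-1}dr)}.
\]
This holds for arbitrary $f\in L^2$ by the truncation and monotone-convergence justification recorded in Section~\ref{sec:preliminaries}.

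Next I will write $C_{d,q}^2=K_d\lambda_0$. This follows from Theorem~\ref{thm:main-extension} via \eqref{eq:main-Cdq}; for $d=2$ it also follows directly from \eqref{eq:lambda-star}. I then expand $\|f\|_2^2=\sum_{k\ge0}a_k$. Dividing the deficit by $K_d$ gives
\[
\frac{D_{d,q}(f)}{K_d}=\lambda_0\sum_{k\ge0}a_k-\Bigl\|\sum_k a_k\phi_k\Bigr\|_{L^p}.
\]

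The third step is to add and subtract $\sum_{k\ge0}a_k\lambda_k$. The combination
\[
\sum_k a_k\lambda_k-\Bigl\|\sum_k a_k\phi_k\Bigr\|_{L^p}
\]
is by definition $\mathcal T_q(f)$ from \eqref{eq:triangle-remainder}. What remains is $\sum_{k\ge0}(\lambda_0-\lambda_k)a_k$, whose $k=0$ term vanishes. This is exactly \eqref{eq:exact-deficit-decomp}.

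The only point that needs care is convergence, so that the rearrangement of infinite sums is legitimate. The sequence $\lambda_k$ is bounded, since $\lambda_k\le\lambda_0$ by Theorem~\ref{thm:main-extension}, or simply because $\lambda_k\to0$. Together with $\sum_k a_k<\infty$, this makes $\sum_k a_k\lambda_k$ absolutely convergent and at most $\lambda_0\|f\|_2^2$. Hence every series involved is finite and the subtraction is valid.

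The nonnegativity $\mathcal T_q(f)\ge0$ is Minkowski's inequality in $L^p$ with $p=q/2>1$. I will apply it first to finite sums and then pass to the limit by monotone convergence, since $\phi_k\ge0$, exactly as in \eqref{eq:Minkowski-modal}. I expect no real obstacle here; the summability check is the only step requiring attention.
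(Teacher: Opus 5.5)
Your proposal is correct and is essentially the paper's own one-line proof. You substitute the spectral identity into the deficit, use $C_{d,q}^2=K_d\lambda_0$ and $\|f\|_2^2=\sum_k a_k$, and add and subtract $\sum_k a_k\lambda_k$; your summability check (bounded $\lambda_k$ together with $\sum_k a_k<\infty$) is a sensible detail the paper leaves implicit.
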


\begin{proof}
Insert the spectral identity \eqref{eq:spectral-identity} into
\eqref{eq:deficit}, add and subtract $K_d\sum_k a_k\lambda_k$, and use
$\sum_k a_k=\|f\|_2^2$.
\end{proof}

\begin{proof}[Proof of Theorem~\ref{thm:intro-global-stability}]
We now return to the theorem.  The decomposition
\eqref{eq:exact-deficit-decomp} already contains the desired lower bound.
Indeed, $\lambda_k\le\lambda_*$ for $k\ge1$ and
$\mathcal T_q(f)\ge0$, so
\[
D_{d,q}(f)
\ge K_d(\lambda_0-\lambda_*)\sum_{k\ge1}a_k
=\Gamma_{d,q}\|f-P_0f\|_2^2.
\]
A pure harmonic of any degree in $\mathcal K_*$ attains equality, so the
coefficient is optimal.

The inequality and the optimal coefficient are therefore settled.  It remains
only to identify equality.  This is the point at which strict convexity of
$L^p$ enters: since $p>1$, $L^p$ is strictly convex,
and equality in \eqref{eq:triangle-remainder} can hold only when all nonzero
profiles are proportional.  The functions $\phi_i$ and $\phi_j$ with
$i\ne j$ cannot be proportional because their expansions at the origin have
different first vanishing orders.  Hence equality forces a single harmonic
degree, which must be either zero or an element of $\mathcal K_*$.  In
particular, for $d\ge3$ the strict ordering from
Theorem~\ref{thm:main-extension} gives $\mathcal K_*=\{1\}$, and the only
nonconstant equality cases are first spherical harmonics.
\end{proof}

\begin{remark}\label{rem:circle-index}
When $d=2$, the optimal coefficient has the exact spectral form
\[
\Gamma_{2,q}
=(2\pi)^2
\left[
I_0(q)^{2/q}
-
\max_{k\ge1}I_k(q)^{2/q}
\right].
\]
The proof of Theorem~\ref{thm:main-extension} identifies degree zero as the
unique global maximizing degree, but it does not determine the largest
nonzero degree for every real $q>4$.  Thus the displayed formula is sharp even
though the maximizing nonzero index is not known in full generality.
\end{remark}

\section{Proof of Theorem~\ref{thm:intro-endpoint-stability}}

At the endpoint there is no radial integral to balance, so the finite-$q$
deficit decomposition is no longer the right starting point.  The spectral
identity shows that the proof has a single modal input: the suprema of
$r^{-\nu}J_{\nu+k}$ must decrease strictly with $k$.  We isolate and prove
that statement first; once it is available, the stability estimate and its
sharp coefficient follow in one line from the spectral identity.  Only after
the theorem is proved do we record the explicit description of the first
nonzero supremum.

Recall the notation $K_d:=(2\pi)^d$ from Section~\ref{sec:global-stability},
and set
\[
h_\nu:=\frac{2^{-\nu}}{\Gamma(\nu+1)}.
\]
We then define
\begin{equation}\label{eq:mu-k}
\mu_k
:=
\sup_{r\ge0}|r^{-\nu}J_{\nu+k}(r)|^2.
\end{equation}
At $k=0$ the value at the origin is understood by continuity, so
\[
\mu_0=h_\nu^2,
\qquad
C_{d,\infty}^2=K_d\mu_0=\omega_{d-1}^2.
\]

For the proof of Theorem~\ref{thm:intro-endpoint-stability}, it is therefore
enough to show
\[
\mu_0>\mu_1>\mu_2>\cdots.
\]
Thus the entire endpoint proof is reduced to the following lemma; Landau's
stationary-point principle is introduced here because this is precisely where
it is needed.

\begin{lemma}\label{lem:sup-ordering}
Fix $\nu\ge0$.  The function
\[
m(\alpha):=\sup_{r>0}|r^{-\nu}J_\alpha(r)|
\]
is strictly decreasing for $\alpha>\nu$ and extends continuously to $\alpha=\nu$ with value $h_\nu$.  Consequently
\begin{equation}\label{eq:mu-ordering}
\mu_0>\mu_1>\mu_2>\cdots.
\end{equation}
\end{lemma}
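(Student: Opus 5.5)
The strategy is to use Landau's stationary-point principle: at an interior maximum point of $|r^{-\nu}J_\alpha(r)|$, the order derivative of the amplitude is governed by the order-variation formula of Lemma~\ref{lem:order-derivative}, and there it is strictly negative.

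First, fix $\alpha>\nu$. The function $g_\alpha(r):=r^{-\nu}J_\alpha(r)$ vanishes like $r^{\alpha-\nu}$ at the origin and like $r^{-\nu-1/2}$ at infinity. Hence $|g_\alpha|$ attains its supremum $m(\alpha)$ at finitely many interior points $r_*>0$, and at each such point $J_\alpha'(r_*)=\nu J_\alpha(r_*)/r_*$.

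Second, apply \eqref{eq:order-variation} at $r_*$:
\[
\partial_\alpha J_\alpha(r_*)=r_*\mathcal A_\alpha'(r_*)J_\alpha(r_*)-2\nu\mathcal A_\alpha(r_*)J_\alpha(r_*).
\]
Since $\mathcal A_\alpha>0$ and $\mathcal A_\alpha'<0$, this gives $\operatorname{sign}\partial_\alpha J_\alpha(r_*)=-\operatorname{sign}J_\alpha(r_*)$. In other words, $\partial_\alpha|g_\alpha(r_*)|<0$.

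Third, convert this into monotonicity of the supremum by a Danskin-type envelope argument. The function $(\alpha,r)\mapsto|g_\alpha(r)|$ is $C^1$ near the maximizers, and the maximizing set stays in a compact subset of $(0,\infty)$ locally uniformly in $\alpha$, by the uniform decay at $0$ and at $\infty$ for $\alpha$ in a compact subinterval of $(\nu,\infty)$. So the upper Dini derivative satisfies $D^+m(\alpha)\le\max_{r_*}\partial_\alpha|g_\alpha(r_*)|<0$. A continuous function with negative upper Dini derivative everywhere is strictly decreasing, which gives strict decrease on $(\nu,\infty)$.

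Fourth, handle continuity at $\alpha=\nu$. By the Poisson bound \eqref{eq:Poisson-bound}, $|g_\alpha(r)|\le C_\alpha r^{\alpha-\nu}$. Splitting $r\le R$ against $r>R$ and using uniform Hankel decay for large $r$ gives $\limsup_{\alpha\downarrow\nu}m(\alpha)\le h_\nu$. The reverse inequality $\liminf_{\alpha\downarrow\nu}m(\alpha)\ge h_\nu$ follows from pointwise convergence near $r=0$ together with $\sup_r|g_\nu|=g_\nu(0)=h_\nu$; this last identity comes from the endpoint Cauchy--Schwarz discussion \eqref{eq:endpoint-CS} or directly from the Poisson bound. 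Consequently $\mu_0=h_\nu^2>m(\alpha)^2$ for every $\alpha>\nu$, and the whole chain \eqref{eq:mu-ordering} follows.

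The main obstacle I expect is the third step. Multiple maximizers and possible nondifferentiability of $m$ mean the envelope argument has to be run with one-sided derivatives and needs compactness of the argmax set uniformly in $\alpha$. The $C^1$ dependence of $J_\alpha$ on $\alpha$ handles the smoothness part. For the decay at infinity that keeps maximizers bounded, I would use Watson's bound \eqref{eq:B-upper} when $\nu\ge1/2$, and the Landau bound \eqref{eq:Landau-amplitude} combined with $J_\alpha=O(r^{-1/2})$ when $\nu=0$.
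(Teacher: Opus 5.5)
Your proposal is correct and follows the paper's route. The paper invokes Landau's stationary-point principle with $F(r)=r^{-\nu}$. Its hypothesis $\frac{d}{dr}\bigl(r^{-2\nu}\mathcal A_\alpha(r)\bigr)<0$ is, after multiplying by $r^{2\nu+1}$, exactly your sign condition $r\mathcal A_\alpha'(r)-2\nu\mathcal A_\alpha(r)<0$ at a critical point. The paper then uses interior attainment of the supremum and the Poisson bound at $\alpha=\nu$, as you do. Your Dini-derivative envelope step simply makes explicit the passage from stationary values to the supremum, which the paper states in one line.
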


\begin{proof}
For $\alpha>\nu$, apply Landau's stationary-point monotonicity principle
\cite[Eq.~(8), pp.~150--151]{LandauEJDE} to
\[
F(r):=r^{-\nu},\qquad C_\alpha(r):=J_\alpha(r).
\]
Here $\mathcal A_\alpha$ is the positive kernel defined in
\eqref{eq:Malpha-Aalpha}.  Indeed,
\[
\frac{d}{dr}\!\left(F(r)^2\mathcal A_\alpha(r)\right)
=
\frac{d}{dr}\!\left(r^{-2\nu}\mathcal A_\alpha(r)\right)
=
-2\nu r^{-2\nu-1}\mathcal A_\alpha(r)
+r^{-2\nu}\mathcal A_\alpha'(r)<0,
\]
because $\mathcal A_\alpha(r)>0$ and $\mathcal A_\alpha'(r)<0$ on
$(0,\infty)$.  Thus the hypothesis of Landau's stationary-point principle is
satisfied.  It follows that the absolute value of
$r^{-\nu}J_\alpha(r)$ at every positive stationary point is strictly
decreasing with the order.

We also need to explain why this controls the supremum.  For
$\alpha>\nu$, the small-radius expansion
\[
J_\alpha(r)\sim \frac{(r/2)^\alpha}{\Gamma(\alpha+1)}
\qquad (r\to0^+)
\]
gives
\[
r^{-\nu}J_\alpha(r)\to0,
\]
while the large-radius asymptotic formula for Bessel functions gives
\[
r^{-\nu}J_\alpha(r)=O(r^{-\nu-1/2})\to0
\qquad (r\to\infty).
\]
Hence the supremum is attained at an interior stationary point.  Landau's
principle therefore applies directly and yields the strict decrease of
$m(\alpha)$.

At the endpoint, Poisson's bound \eqref{eq:Poisson-bound} gives
$m(\nu)\le h_\nu$, while the defining series gives
$r^{-\nu}J_\nu(r)\to h_\nu$ as $r\to0^+$.  Thus $m(\nu)=h_\nu$.
Continuity as $\alpha\to\nu$ follows from the standard compact-order
Bessel bounds; see \cite[Chapters~II and VII]{Watson} and
\cite[\S\S10.7 and 10.17]{DLMF}.
\end{proof}

The modal part is now finished, so we return immediately to the endpoint
deficit.  No further auxiliary estimate is needed for the sharp inequality.

\begin{proof}[Proof of Theorem~\ref{thm:intro-endpoint-stability}]
By \eqref{eq:spectral-identity} with the outer norm replaced by the supremum
and by Lemma~\ref{lem:sup-ordering}, which gives $\mu_k\le\mu_1$ for every
$k\ge1$,
\[
\|\widehat{f\sigma}\|_{L^\infty_{\mathrm{rad}}L^2_{\mathrm{ang}}}^2
=K_d\sup_{r\ge0}\sum_{k\ge0}a_k\phi_k(r)
\le
K_d\left(\mu_0a_0+\mu_1\sum_{k\ge1}a_k\right).
\]
This proves \eqref{eq:intro-endpoint-stability}; a pure first harmonic proves
optimality.  The strict inequalities $\mu_1>\mu_k$ for $k\ge2$ from
Lemma~\ref{lem:sup-ordering} exclude higher degrees in an equality case.  A mixture of degrees zero and one is also impossible: $\phi_0$ attains its maximum only at $r=0$, while $\phi_1(0)=0$ and $\mu_1>0$.
\end{proof}

Theorem~\ref{thm:intro-endpoint-stability} is now proved.  For completeness,
we finally record a useful description of the sharp nonconstant modal value.
The number $\mu_1$ can be expressed at a single critical point.  Let $\xi_d$ be the first positive critical point of
\[
u_1(r):=r^{-\nu}J_{\nu+1}(r).
\]
Then
\begin{equation}\label{eq:mu1-xi}
\xi_dJ_\nu(\xi_d)
=(d-1)J_{\nu+1}(\xi_d),
\qquad
\mu_1
=\xi_d^{-2\nu}J_{\nu+1}(\xi_d)^2.
\end{equation}
Indeed, $u_1$ solves
\[
u_1''+\frac{d-1}{r}u_1'+g(r)u_1=0,
\qquad
g(r):=1-\frac{d-1}{r^2}.
\]
The quantity
\[
u_1^2+\frac{(u_1')^2}{g}
\]
is nonincreasing for $r>\sqrt{d-1}$, so no later critical point can have a larger absolute value than the first one.

\section{Technical details}\label{app:limits}

This final section plays the role of an appendix and collects the routine analytic justifications, parameter checks, and normalization remarks used in the preceding proofs.

\subsection{Differentiation in the Bessel order}

Fix $q>1$, $\beta\in\mathbb R$, and $\alpha_0>0$ with
\[
\beta+q\alpha_0>-1,
\qquad
\beta-\frac q2<-1.
\]
Choose a compact positive-order interval $[\alpha_-,\alpha_+]$ containing $\alpha_0$ and satisfying $\beta+q\alpha_->-1$.  The standard compact-order Bessel estimates (see \cite[Chapters~II and VII]{Watson} and \cite[\S\S10.7 and 10.17]{DLMF}) give, uniformly in this interval,
\[
|J_\alpha(r)|\le Cr^{\alpha_-},
\qquad
|J_\alpha'(r)|\le Cr^{\alpha_--1}
\quad(0<r\le1),
\]
and
\[
|J_\alpha(r)|+|J_\alpha'(r)|\le Cr^{-1/2}
\quad(r\ge1).
\]
The positive representation \eqref{eq:Malpha-Aalpha} of $\mathcal A_\alpha$ gives
\[
\mathcal A_\alpha(r)\le C(1+|\log r|),
\qquad
-r\mathcal A_\alpha'(r)\le C
\quad(0<r\le1),
\]
and
\[
\mathcal A_\alpha(r)\le \frac Cr,
\qquad
-r\mathcal A_\alpha'(r)\le \frac Cr
\quad(r\ge1).
\]
Using \eqref{eq:order-variation}, it follows that
\[
\left|\partial_\alpha\bigl(r^\beta|J_\alpha(r)|^q\bigr)\right|
\le
\begin{cases}
Cr^{\beta+q\alpha_-}(1+|\log r|),&0<r\le1,\\
Cr^{\beta-q/2},&r\ge1.
\end{cases}
\]
Both functions are integrable.  This justifies differentiation under the integral in Lemma~\ref{lem:order-derivative} and also shows that the order derivative depends continuously on $\alpha$ on compact subintervals of the admissible region.

For the integration by parts used there, first integrate over $[\varepsilon,R]$.  The boundary term at zero is $O(r^{\beta+1+q\alpha_-}(1+|\log r|))$, while the boundary term at infinity is $O(r^{\beta-q/2})$.  Both vanish under the stated hypotheses, and the bulk terms are absolutely integrable by the same estimates.

\subsection{Positive kernels, phases, and zeros}

All radius derivatives of the positive kernels used above may be taken under the integral on compact positive-radius intervals.  Near the origin the logarithmic singularity of $K_0$ is harmless, while $xK_1(x)\le1$ controls the first derivative.  At infinity the modified Bessel functions decay exponentially.  The same observations justify the Tonelli interchanges made before a sign-changing difference is introduced.

The Wronskian guarantees
\[
J_\alpha(r)^2+Y_\alpha(r)^2>0,
\qquad r>0,
\]
so the Hankel phase $\theta_\alpha$ from \eqref{eq:Hankel-amplitude} may
be chosen continuously on $(0,\infty)$.  In the circle tail argument the
primitive $G_q$ is $\pi$-periodic and bounded; since $b_{\alpha,q}$ is defined
in \eqref{eq:bnuq} and $b_{\alpha,q}(r)\to0$,
\[
\int_R^\infty |b_{\alpha,q}'(r)|\,dr=b_{\alpha,q}(R),
\]
which justifies the integration by parts at infinity.

No logarithmic derivative of $J_\alpha$ is taken across a zero.  The Gaussian estimate is used only on the interval where positivity follows from the classical ordering of the first positive zeros.  Moreover, $|J_\alpha|^q$ is $C^1$ for $q>1$, so the order differentiation creates no contribution at a zero.  Adjacent Bessel functions have no common positive zero, by the recurrence relations and uniqueness for the Bessel equation.

\subsection{Coverage of the spherical parameters}

For the spherical weight
\[
\beta:=2\nu+1-\nu q,
\qquad
b:=\beta+1=2-\nu(q-2),
\qquad
\delta:=q\left(\nu+\frac12\right)-2\nu-2,
\]
admissibility is exactly $\delta>0$. Theorem~\ref{thm:base-bridge} gives the first comparison for every admissible exponent, independently of the sign of $b$. If $b\le0$, Lemma~\ref{lem:order-derivative} gives the required order monotonicity immediately. If $b>0$, then
\[
2<q<6,
\qquad
0<b<\frac q2,
\qquad
b\le3-\frac q2,
\qquad
\nu+1=\frac{q-b}{q-2}\ge\frac32.
\]
The positive--negative balance in Section~\ref{sec:higher-dimensional} proves
$\partial_\alpha M_\alpha(q,\beta)<0$ for every continuous order
$\alpha\ge\nu+1$.  Integrating this negative derivative over adjacent unit
intervals proves all comparisons after the first mode.  No continuous-order
estimate on $[\nu,\nu+1]$ is needed.

On the circle, the first three positive degrees are treated on $4<q\le6$,
while degrees $k\ge4$ are handled first on $4<q\le5$.  The Landau bound
extends the high-mode comparison to $5\le q<6$, and the previously known
range \eqref{eq:circle-known-large-q} covers $q\ge6$.  No infinite fourth
moment is used at the critical exponent.

\subsection{Normalization changes}

If $\bar\sigma:=\sigma/\omega_{d-1}$ is used both on the source sphere and in the angular norm, and the extension operator is defined with $\bar\sigma$, then
\[
\|f\|_{L^2(\bar\sigma)}
=\omega_{d-1}^{-1/2}\|f\|_{L^2(\sigma)},
\qquad
\widehat{f\bar\sigma}=\omega_{d-1}^{-1}\widehat{f\sigma},
\qquad
\bar C_{d,q}:=\frac{C_{d,q}}{\omega_{d-1}}.
\]
If only the Fourier phase is changed from $e^{-ix\cdot\omega}$ to $e^{-2\pi ix\cdot\omega}$, then
\[
C_{d,q}^{(2\pi)}:=(2\pi)^{-d/q}C_{d,q}
\quad(q<\infty),
\qquad
C_{d,\infty}^{(2\pi)}:=C_{d,\infty}.
\]

\bibliographystyle{amsalpha}

\section*{Acknowledgement}

The author used generative AI tools as an aid in the development and
presentation of this work during the preparation of this manuscript. The
author takes full responsibility for the mathematical content of this
manuscript, including all statements, proofs, computations, and conclusions.

\end{document}